\newtheorem{theorem}{Theorem}[section]
\newtheorem{proposition}{Proposition}[section]
\newtheorem{lemma}{Lemma}[section]
\newtheorem*{remark}{Remark}
\newtheorem{corollary}{Corollary}[section]
\newtheorem{definition}{Definition}[section]
\newcommand{\R}{\mathbb R}
\newcommand{\N}{\mathbb N}
\newcommand{\1}{\mathds{1}}
\title{Eventual Regularization of Fractional Mean Curvature Flow}
\author{Stephen Cameron}
\date{}
\begin{document}
\maketitle

\begin{abstract}
We show that any open set that is a finite distance away from a Lipschitz subgraph will become a Lipschitz subgraph after flowing under fractional mean curvature flow for a finite, universal time.  Our proof is quantitative and inherently nonlocal, as the corresponding statement is false for classical mean curvature flow.  This is the first regularizing effect proven for weak solutions to nonlocal curvature flow.  
\end{abstract}

\section{Introduction}

For sufficiently regular set $E\subseteq \R^d$ and $s\in (0,1)$, we define the $s$-fractional perimeter of $E$
\begin{equation}
\begin{split}
P_s(E):= s(1-s)[\1_{E}]_{\dot{W}^{s,1}} &= s(1-s)\int\limits_{\R^d}\int\limits_{\R^d} \frac{|\1_{E}(X)-\1_E(Y)|}{|X-Y|^{d+s}}dXdY
\\&= 2s(1-s)\int\limits_{\R^d}\int\limits_{\R^d} \frac{\1_{E}(X)\1_{\mathcal{C}E}(Y)}{|X-Y|^{d+s}}dXdY,
\end{split}
\end{equation}
where $\1_E, \1_{\mathcal{C}E}$ are the characteristic functions of $E$ and its compliment $\mathcal{C}E$.  The $s$-fractional perimeter interpolates between our usual notions of perimeter and Lebesgue measure, with 
\begin{equation}
\lim\limits_{s\to 1}P_s(E)=C_d P(E), \qquad \lim\limits_{s\to 0}P_s(E) =C_d'\mathcal{L}^d(E),
\end{equation} 
for bounded regular sets (see \cite{LimitS1}, \cite{LimitS0}).  The $s$-fractional perimeter was first introduced in \cite{NonlocalMinimal} where the authors studied the regularity of minimizers, known as nonlocal minimal surfaces.  Minimizers satisfy the Euler-Lagrange equation
\begin{equation}\label{e:smeancurvature}
H_s(X,E) := -s(1-s)P.V.\int\limits_{\R^d}\frac{\1_{E}^{\pm}(X+Z)}{|Z|^{d+s}}dZ=0,  
\end{equation}
for all points $X\in \partial E$, where $\1_{E}^\pm = \1_E - \1_{\mathcal{C}E}$ is the signed characteristic function.  The quantity $H_s$ is called the $s$-fractional mean curvature, and it converges to the classical mean curvature as $s\to 1$ \cite{LimitS1}.  Thus fractional mean curvature can be thought of as a nonlocal, fractional order analogue of local mean curvature.  

We are interested in studying the regularizing effects of the flow 
\begin{equation}\label{e:meancurvfloweqnset}
\partial_tX(t) = -H_s(X,E_t)\nu(X), \quad X(t)\in \partial E_t.
\end{equation}  
Solutions to the flow \eqref{e:meancurvfloweqnset} are translation invariant, satisfy the comparison principle, and have natural rescaling $t\to \displaystyle\frac{1}{R}E_{R^{1+s}t}$ for any $R>0$.  See Propositions \ref{p:comparisonprinciple}, \ref{p:fracmeancurvbasics}, and Corollary \ref{c:fracmeancurvflowrescaling} for basic proofs.

\begin{figure}[!htb]
\minipage{0.32\textwidth}
\includegraphics[width=\linewidth]{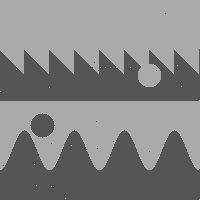}
\caption{\label{fig:initial} $t=0$}
\endminipage\hfill
\minipage{0.32\textwidth}
\includegraphics[width=\linewidth]{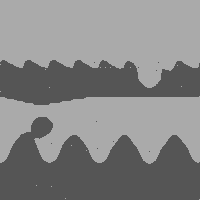}
\caption{$t=4$}
\endminipage\hfill
\minipage{0.32\textwidth}
\includegraphics[width=\linewidth]{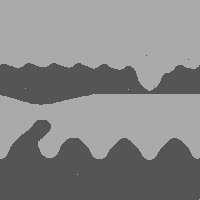}
\caption{\label{fig:pinch1} $t=7$}
\endminipage\hfill
\end{figure}

\begin{figure}[!htb]
\minipage{0.32\textwidth}
\includegraphics[width=\linewidth]{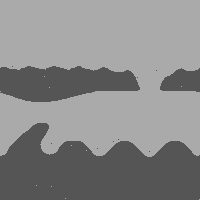}
\caption{\label{fig:pinch2} $t=10$}
\endminipage\hfill
\minipage{0.32\textwidth}
\includegraphics[width=\linewidth]{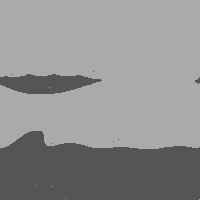}
\caption{$t=18$}
\endminipage\hfill
\minipage{0.32\textwidth}
\includegraphics[width=\linewidth]{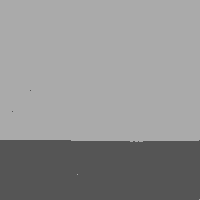}
\caption{\label{fig:final}$t=27$}
\endminipage\hfill
\caption*{$x$-periodic,  $s=0.2$ nonlocal mean curvature flow}
\end{figure}

Figures \ref{fig:initial} to \ref{fig:final} above give a good illustration of a number interesting properties of the nonlocal flow.  Between Figures \ref{fig:initial} and \ref{fig:pinch1}, we see the disjoint ball in the lower left hand corner become attracted to and join the lower portion of the set.  In the upper right hand corner of Figures \ref{fig:pinch1} and \ref{fig:pinch2}, we see a neckpinch singularity form in the upper portion of the set, which is impossible for 2d local mean curvature flow (see \cite{Neckpinch} for more details).  Finally after the neckpinch, we see the upper portion of the set shrink to nothing until eventually we are left with a flat graphical set in Figure \ref{fig:final}.

In the case that $E_0 $ is the subgraph of a smooth Lipschitz function $u_0: \R^{d-1}\to \R$, the flow exists for all time \cite{SmoothCurvatureEquations} and can be equivalently described by
\begin{equation}\label{e:meancurvfloweqngraph}
\partial_t u(t,x) = s(1-s)\sqrt{1+|\nabla u(t,x)|^2} \int\limits_{\R^{d-1}}\frac{u(t,x+z)-u(t,x)}{|z|^{d+s}}\Lambda\left(\frac{u(t,x+z)-u(t,x)}{|z|}\right)dz,
\end{equation}
where the nonlinearity $\Lambda(L) = \displaystyle\frac{1}{L}\int\limits_{-L}^{L} \frac{1}{(1+z_d^2)^{(d+s)/2}}dz_d$.  Thus 
\begin{equation}
\Lambda(||\nabla_x u||_{L^\infty})\leq \Lambda\left(\frac{u(t,x+z)-u(t,x)}{|z|}\right)\leq 2,
\end{equation}
so this is a nonlinear parabolic equation of order $1+s$, with the ellipticity constant depending on the Lipschitz constant of $u$.  
From this parabolicity, we see that fractional mean curvature flow is regularizing on Lipschitz subgraphs.  

While in general smooth solutions $t\to E_t$ of \eqref{e:meancurvfloweqnset} do not exist for all times $t$ for nongraphical initial data, it's possible to define weak viscosity solutions via the level set method which will exist for all time.  See \cite{CyrilFract,Threshold,Chambolle,CrystallineChambolle} or the appendix for details.  In this article, we shall show that for any initial data $E_0$ that is bounded between two Lipschitz subgraphs, the minimal viscosity supersolution will itself become a Lipschitz subgraph in finite time.  

\begin{theorem}\label{t:main}
Let $E_0\subseteq \R^d$ be an open set and $u_0\in \dot{W}^{1,\infty}(\R^{d-1})$ be a Lipschtiz function with $||\nabla_x u_0||_{L^\infty} = L$ and 
\begin{equation}\label{e:u0setinclusion}
\left\{(x,x_d)\bigg| x\in\R^{d-1}, x_d< u_0(x)-\frac{R}{2}\right\} \subseteq E_0\subseteq \left\{(x,x_d)\bigg| x\in\R^{d-1}, x_d < u_0(x)+\frac{R}{2}\right\},
\end{equation}
for some $R\geq 0$.  Let $E_t$ be the minimal viscosity supersolution of the flow \eqref{e:meancurvfloweqnset}, in the sense of Definition \ref{d:viscosityset}.  Then for all $t\geq R^{1+s}T(d,s,L)$, $E_t$ will be a $(1+L)$-Lipschitz subgraph.  The time $T$ can be bounded explicitly, with
\begin{equation}
T(d,s,L) \leq \frac{C(d)(1+L)^{d+s}}{s^2(1-s)},
\end{equation}
for some dimensional constant $C(d)$.  
\end{theorem}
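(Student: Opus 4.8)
The plan is first to strip off $R$ by the parabolic rescaling $E_t\mapsto \tfrac1R E_{R^{1+s}t}$ of Corollary~\ref{c:fracmeancurvflowrescaling}, which replaces $u_0$ by $\tfrac1R u_0(R\,\cdot)$ (still $L$-Lipschitz) and reduces us to $R=1$; it then suffices to produce $T=T(d,s,L)$ as in the statement. Everything is organized around the direction cone
\[
K \ :=\ \big\{\, v=(v',v_d)\in\R^{d-1}\times\R \ :\ |v|=1,\ \ v_d\le -(1+L)|v'| \,\big\},
\]
and the elementary fact that a nonempty open proper set $F\subseteq\R^d$ is the subgraph of a $(1+L)$-Lipschitz function if and only if $F+\tau v\subseteq F$ for all $v\in K$, $\tau>0$ (for the ``if'': $F$ is downward closed on each vertical line, hence $F=\{x_d<w(x')\}$ with $w(x'):=\sup\{x_d:(x',x_d)\in F\}$, and tilting by the extreme directions of $K$ gives $|w(x')-w(y')|\le(1+L)|x'-y'|$). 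Accordingly I would track
\[
\rho(t)\ :=\ \inf\big\{\rho\ge 0\ :\ E_t+\tau v\subseteq E_t\ \text{ for all } v\in K,\ \tau\ge\rho\big\},
\]
so that the theorem is exactly the assertion $\rho(t)=0$ for $t\ge T(d,s,L)$.

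Next I would record two reductions. \emph{(a) $\rho(0)\le C(L)$.} For $v\in K$, $\tau>0$, the sandwich \eqref{e:u0setinclusion} (with $R=1$) and the $L$-Lipschitz bound on $u_0$ give $E_0+\tau v\subseteq\{x_d<u_0(x')+\tfrac12+\tfrac{\tau v_d}{1+L}\}$, and since $|v_d|\ge (1+L)/\sqrt{(1+L)^2+1}$ on $K$, the right side lies in $\{x_d<u_0(x')-\tfrac12\}\subseteq E_0$ once $\tau\ge C(L):=\sqrt{(1+L)^2+1}$. \emph{(b) $\rho$ is nonincreasing.} By translation invariance (Proposition~\ref{p:fracmeancurvbasics}) a translate of the minimal supersolution is the minimal supersolution of the translated datum, so by the comparison principle (Proposition~\ref{p:comparisonprinciple}) any inclusion $E_{t_0}+\tau v\subseteq E_{t_0}$ persists for all later times; hence $\rho(t)\le C(L)$ for all $t$. (The same two facts show a genuine $L'$-Lipschitz subgraph stays one, which I use freely for barriers.)

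The heart of the proof is a quantitative smoothing step: \emph{if $\rho(t_0)\le\rho$ then $\rho(t_0+\Delta)\le\rho/2$ for $\Delta=c(d,s,L)\,\rho^{1+s}$}, which by the rescaling and (b) reduces to $t_0=0$, $\rho=1$. The hypothesis $\rho(0)\le1$ says precisely that along every line in a direction $v\in K$ the non-monotone part of $E_0$ lies in a segment of length $\le 1$; thus each connected component of the ``defect'' of $E_0$ is a solid protrusion of height $\le1$ into the ambient complement, or an interior air pocket of depth $\le1$, sitting against an essentially $(1+L)$-Lipschitz graph. Near each such piece one builds a barrier — a sub/supersolution formed from a truncated cone or a slab of height $O(1)$ against a flat or $(1+L)$-Lipschitz half-space — and checks via \eqref{e:smeancurvature} that its free boundary is driven strictly into the solid (for a pocket), resp. strictly downward (for a protrusion), with normal speed at least $c(d,s,L)\,\rho^{-s}$. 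The positivity and the exponent $-s$ come entirely from the nonlocal tail $s(1-s)\int_{|Z|>\rho}|Z|^{-d-s}\dd Z\sim s(1-s)\,\rho^{-s}$ in $H_s$, which has no local counterpart — exactly why a long nearly flat protrusion persists for classical mean curvature flow but not here. Comparison then consumes every defect of height $\le\rho$ within time $\Delta$, while (b) prevents any larger defect from reappearing; this gives the claimed step.

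Finally I would iterate along $\rho_k:=C(L)2^{-k}$: from $\rho(0)\le\rho_0$ the smoothing step gives $\rho(t)\le\rho_k$ for $t\ge\sum_{j<k}c(d,s,L)\rho_j^{1+s}$, and since $\sum_{j\ge0}\rho_j^{1+s}=C(L)^{1+s}\sum_{j\ge0}2^{-j(1+s)}<\infty$ this is a finite explicit total time $T(d,s,L)$; tracking $c(d,s,L)$ and $C(L)$ through the barrier estimate and the convergent series yields $T(d,s,L)\le C(d)(1+L)^{d+s}/(s^2(1-s))$. Hence $\rho(t)=0$ for $t\ge T(d,s,L)$, and by (b) and the equivalence of the first paragraph $E_t$ is a $(1+L)$-Lipschitz subgraph for all such $t$ — its defining function is finite-valued because $\{x_d<\underline u(t,x')\}\subseteq E_t\subseteq\{x_d<\bar u(t,x')\}$, where $\underline u,\bar u$ are the smooth graphical flows issuing from $u_0\mp\tfrac12$, which exist for all time by \cite{SmoothCurvatureEquations} — and undoing the rescaling reinstates the factor $R^{1+s}$. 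The main obstacle is the smoothing step: for an \emph{arbitrary} overhang rather than a tidy cone, constructing the barrier and verifying it is a genuine sub/supersolution in the sense of Definition~\ref{d:viscosityset}, ruling out regeneration of a defect of intermediate height within each smoothing window, and making the dependence of the inward speed on $L$ and on $s$ sharp enough to land the stated bound on $T$.
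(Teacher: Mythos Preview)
Your reductions (a) and (b), the parabolic rescaling, and the iteration scheme are all fine, and the function $\rho(t)$ is a sensible thing to track. But the smoothing step is not a proof --- it is a wish. You assert that ``near each such piece one builds a barrier'' and that the resulting normal speed is $\gtrsim c(d,s,L)\rho^{-s}$, but you neither specify the barrier nor verify that it is a sub/supersolution for the actual geometry present. The defects permitted by $\rho(t_0)\le\rho$ are not isolated tidy cones or slabs: they can be long thin overhangs, nested pockets, fractal-looking protrusions, or arbitrary combinations thereof, and nothing in your setup localizes them. A barrier built for a model defect need not sit above or below the real set, and the nonlocal tail you invoke sees the entire configuration, not just the piece you have in mind. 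You also claim that (b) ``prevents any larger defect from reappearing,'' but (b) only freezes the threshold $\rho$; it does not stop defects of intermediate height from migrating or merging during the window $\Delta$, so even if each defect shrinks, it is not automatic that the \emph{global} $\rho$ halves. You identify all of this yourself in your last sentence, which is honest, but it means the argument as written does not close.

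The paper takes a different route that avoids barriers entirely. It tracks a time-dependent modulus of continuity $\omega(t,r)$ for the set in the sense $\overline u(x)-\underline u(y)\le\omega(t,|x-y|)$ (equivalently $E-(z,z_d)\subseteq E$ whenever $z_d\ge\omega(|z|)$), constructs $\omega$ explicitly so that $\omega(0,r)>1+Lr$ and $\omega(T,0)=0$, and runs a Kiselev-style breakthrough argument: at the first touching point $(t_0,x,y)$ one reads off $\partial_t\overline u-\partial_t\underline u$ from the flow, bounds it via pointwise estimates on the signed-characteristic integrand (Lemma~\ref{l:fixedzestimate}, Lemma~\ref{l:P.V.estimate}) and an integral rearrangement (Lemma~\ref{l:intrearrange}) purely in terms of $\omega$, and then checks this bound beats $\partial_t\omega$ for the explicit $\omega$ built in Section~5. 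The passage to viscosity solutions is handled by working on generic level sets where $\mathcal L^d(\Gamma_t^\gamma)=0$ for a.e.\ $t$ and then approximating. The upshot is that the paper never has to control a single defect in isolation; the modulus inequality at the touching point does all the work, and this is where the explicit constant $C(d)(1+L)^{d+s}/(s^2(1-s))$ comes from.
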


We stress that there is no initial regularity assumption on $E_0$ in Theorem \ref{t:main}.  The boundary $\partial E_0$ can have positive measure, and the set $E_0$ does not even need to be connected, such as in Figure \ref{fig:initial} above.  Our only assumption is that $E_0$ is a finite distance in the Hausdorff metric from a Lipschitz subgraph, which amounts to assuming that our set $E_0$ only ``grows linearly at infinity."  This linear growth allows us to recover the regularizing effects of fractional mean curvature flow on Lipschitz functions \eqref{e:meancurvfloweqngraph} at large scales.  Effectively, we apply \eqref{e:meancurvfloweqngraph} on large scales $|z|>R$ where our equation will be uniformly parabolic, and then use this to propogate these growth bounds to smaller scales until we achieve Lipschitz regularity.

Taking $u_0\equiv 0$ and $R=2$, we get the immediate corollary 

\begin{corollary}\label{c:flat}
Let $E_0\subseteq \R^d$ be an open set with 
\begin{equation}\label{e:u0setinclusion}
\left\{(x,x_d)\bigg| x_d< -1\right\} \subseteq E_0\subseteq \left\{(x,x_d)\bigg| x_d < 1\right\}.
\end{equation}
Let $E_t$ be the minimal viscosity supersolution of the flow \eqref{e:meancurvfloweqnset}, in the sense of Definition \ref{d:viscosityset}.  Then for all $t\geq T(d,s)$, $E_t$ will be a $1$-Lipschitz subgraph. 
\end{corollary}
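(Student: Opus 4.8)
Since Corollary \ref{c:flat} is precisely the case $u_0\equiv 0$ (so that $L=\|\nabla_x u_0\|_{L^\infty}=0$) and $R=2$ of Theorem \ref{t:main}, with $T(d,s):=2^{1+s}T(d,s,0)\lesssim 1/(s^2(1-s))$ and $1+L=1$, I will describe a plan for the theorem. By the rescaling $E_t\mapsto R^{-1}E_{R^{1+s}t}$ of Corollary \ref{c:fracmeancurvflowrescaling} I may assume $R=1$, restoring the factor $R^{1+s}$ in the time bound at the very end. The first step is to set up smooth graphical barriers. Let $v(t,\cdot)$ and $w(t,\cdot)$ solve the graphical flow \eqref{e:meancurvfloweqngraph} with initial data $u_0-\tfrac12$ and $u_0+\tfrac12$; these exist for all time \cite{SmoothCurvatureEquations}, their subgraphs are respectively a viscosity sub- and supersolution of \eqref{e:meancurvfloweqnset}, and since $E_t$ is the \emph{minimal} supersolution, the comparison principle (Proposition \ref{p:comparisonprinciple}) together with \eqref{e:u0setinclusion} gives
\[
\{x_d<v(t,x)\}\subseteq E_t\subseteq\{x_d<w(t,x)\}\qquad(t\ge0).
\]
Because \eqref{e:meancurvfloweqngraph} is invariant under adding a constant to $u$ one has $w\equiv v+1$, and comparing $v$ with its horizontal and tilted translates keeps $\|\nabla_x v(t,\cdot)\|_{L^\infty}\le L$; thus at every time $E_t$ is trapped in a slab of width $1$ about the graph of the $L$-Lipschitz function $v(t,\cdot)$. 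This merely recovers the hypothesis, and the real work lies in narrowing the slab.

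The second step is a monotone ``cone of admissible translations.'' Set $\mathcal{C}_t:=\{\zeta\in\R^d:\ E_t+\zeta\subseteq E_t\}$; by translation invariance (Proposition \ref{p:fracmeancurvbasics}) and the comparison principle, $\zeta\in\mathcal{C}_0$ forces $\zeta\in\mathcal{C}_t$ for all $t\ge0$, so $\mathcal{C}_t\supseteq\mathcal{C}_0$. A one-line computation from \eqref{e:u0setinclusion} and the $L$-Lipschitz bound on $u_0$ shows $E_0+\zeta\subseteq E_0$ whenever $\zeta=(z',z_d)$ satisfies $z_d\le -L|z'|-1$, so $\mathcal{C}_t$ contains, for every $t$, the downward solid cone of slope $L$ with apex at the point $-e_d$. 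Geometrically, $E_t$ satisfies the one-sided $(1+L)$-Lipschitz cone condition at every scale $\ge 1$ — this is exactly the ``linear growth at infinity'' hypothesis — whereas $E_t$ is a $(1+L)$-Lipschitz subgraph precisely when $\mathcal{C}_t$ contains the cone $\{z_d\le-(1+L)|z'|\}$ with apex at the origin. So the task is to let the flow push the apex of the admissible cone up to the origin.

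The core should be a quantitative one-step improvement: if $E_0$ lies in a slab of width $\rho$ about an $L'$-Lipschitz graph, then after a time $\tau\lesssim\rho^{1+s}(1+L')^{d+s}/(s^2(1-s))$ the set $E_\tau$ lies in a slab of width $\tfrac12\rho$ about an $(L'+c\rho)$-Lipschitz graph, with $c$ universal (the powers of $1+L'$ and $s^{-1}$ coming from flattening the tilted reference graph and from the nonlocal near-field cutoff at scale $\rho$). The mechanism is that on spatial scales $|z|\gtrsim\rho$ the set flow \eqref{e:meancurvfloweqnset} is governed by the graphical equation \eqref{e:meancurvfloweqngraph}, which is there \emph{uniformly parabolic of order $1+s$} with ellipticity $\gtrsim s(1-s)\Lambda(L')\gtrsim s(1-s)(1+L')^{-1}$; such equations damp an oscillation of amplitude $\rho$ by a fixed factor after the stated time. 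To transfer this from the graph to the weak set flow I would argue at a first time $t_0$ at which $\partial E_t$ and $\partial(E_t+\zeta)$ touch, for $\zeta$ in the not-yet-admissible part of the origin-apex cone: near the contact point one inserts a smooth graphical barrier built from \eqref{e:meancurvfloweqngraph} that is correctly ordered with $E_t$ on the scales already controlled and, by the parabolic estimate, \emph{strictly} separated at the contact point, contradicting the comparison principle. Iterating over $\rho=2^{-k}$, the Lipschitz constant increases from $L$ by $c\sum_k2^{-k}$, which stays $\le 1+L$ for a suitable universal $c$ (and the constraint is vacuous when $L=0$), while the total time is $\sum_k\tau_k\lesssim (1+L)^{d+s}/(s^2(1-s))\sum_k2^{-k(1+s)}\lesssim (1+L)^{d+s}/(s^2(1-s))$; in the limit the slab collapses onto a $(1+L)$-Lipschitz graph, which is the theorem, and undoing the rescaling restores the factor $R^{1+s}$.

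I expect the third step to be the genuine obstacle: transferring the smoothing of the graphical flow to the weak set flow through a barrier at a first touching time — one must build the barrier so that it dominates $E_t$ on exactly the scales where the cone condition already holds while leaving room to beat it at the contact point — and, at the same time, controlling the drift of the Lipschitz constant so that it stays below $1+L$ through infinitely many steps, all while tracking the precise $s$- and $L$-dependence of the parabolic estimates to produce the explicit bound on $T$. By contrast, the first two steps are soft consequences of the comparison principle and translation invariance and are insensitive to the non-uniqueness (fattening) that may occur for the weak flow, since only the minimal supersolution and one-sided comparisons are used.
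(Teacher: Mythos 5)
Your opening reduction is exactly the paper's proof of Corollary \ref{c:flat}: the corollary is nothing more than Theorem \ref{t:main} with $u_0\equiv 0$, $L=0$, $R=2$, and the paper states it as an immediate consequence. Had you stopped there, the proposal would be complete. Your Step 2 is also genuinely the paper's starting point: the cone $\mathcal{C}_t$ of admissible translations is precisely the set formulation of a modulus of continuity (Definition \ref{d:setmodulusdefn}), and its monotonicity in $t$ is Proposition \ref{p:propagation}.

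Where you diverge is in the plan for the theorem itself, and there the core step is missing rather than merely different. The paper does not iterate a dyadic one-step improvement; it constructs a single continuous family of moduli $\omega(t,r)$ interpolating from $1+Lr+\delta(t)$ down to a Lipschitz modulus, runs a Kiselev-style breakthrough argument at the first crossing time, and closes the loop with an integral rearrangement (Lemmas \ref{l:fixedzestimate} through \ref{l:intrearrange}) that converts the nonlocal curvature difference at the crossing point into a functional of $\omega$ alone. Your proposed replacement --- if $E_0$ is within a slab of width $\rho$ of an $L'$-Lipschitz graph then after time $\sim\rho^{1+s}$ it is within a slab of width $\rho/2$ --- is asserted, not proved, and you correctly identify it as the genuine obstacle. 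Three concrete difficulties: (i) the weak flow's boundary can fatten and have positive measure, so a ``first touching time'' between $\partial E_t$ and $\partial(E_t+\zeta)$ is not available as stated; the paper needs all of Section 6 (working with a.e.\ level set $\gamma$, the measure bound on $\Gamma_t^\gamma$, and the doubled-variable viscosity Lemma \ref{l:viscositydouble}) to make the touching-point argument rigorous. (ii) A barrier that is ``correctly ordered on the scales already controlled'' does not suffice for a nonlocal comparison: $H_s$ sees all scales at once, and the contributions of the uncontrolled near field and of the multi-valued region between $\underline{u}$ and $\overline{u}$ must be estimated quantitatively --- this is what the second integral in Lemma \ref{l:P.V.estimate} does, and it has no analogue in your sketch. (iii) The decay-of-oscillation statement you invoke is a regularity estimate for solutions of the graphical equation \eqref{e:meancurvfloweqngraph}; what is needed is a decay estimate for $\overline{u}-\underline{u}$ when $\partial E_t$ is not a graph, which does not follow from parabolic theory for \eqref{e:meancurvfloweqngraph} and is the actual content of Sections 3 through 5. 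So the proposal is a valid reduction of the corollary to the theorem, but as a proof of the theorem it leaves the central mechanism unestablished.
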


From Corollary \ref{c:flat} it's clear that Theorem \ref{t:main} can be viewed as a parabolic version of the ``flat implies smooth" result of \cite{NonlocalMinimal} for nonlocal minimal surfaces.  One key difference between these results is that the proof of flat implies smooth in \cite{NonlocalMinimal} is by compactness, with non explicit constants.  Our proof is constructive, giving an explicit modulus of continuity for the set $E_t$.  See Subsection \ref{s:proofdiscussion} for a more detailed discussion of our approach.

Additionally, we wish to stress that the result Theorem \ref{t:main} is inherently nonlocal in nature, with the time $T(d,s,L)\to \infty$ as $s\to 1$.  This is a necessity, as the theorem is false for classical mean curvature flow.  The set 
\begin{equation}
E_0 = \{(x,x_d)| x_d<-1 \text{ or } 0<x_d<1\}, 
\end{equation}
is fixed by local mean curvature flow and hence never becomes graphical.  In this case, it's clear that the barrier to regularity is multiplicity, as the problem is that $\partial E_0$ is a disjoint union of hyperplanes.  But because of the nonlocal nature of fractional mean curvature, the points on the disjoint hyperplaces $\{x_d = \pm 1\}$ can still sense each other, and are no longer fixed.  Direct calculation shows that flowing under fractional mean curvature flow, $E_t \to \{(x,x_d)| x_d<0\}$ in finite time $T\sim \displaystyle\frac{1}{s(1-s)}$ for any $0<s<1$.

%\begin{remark}(Higher Regularity) We note that for times $t\geq R^{1+s}T$, our set $E_t$ will be a uniformly Lipschitz subgraph, and hence the evolution can equivalently be described by the uniformly parabolic equation \eqref{e:meancurvfloweqngraph}.  By applying nonlocal regularity results such as in \cite{NonlocalHolderRegularity}, we then get that solutions $u$ are $C^{1,\alpha}$ in space for $t>R^{1+s}T$.  This falls short of the $C^{1, s+\epsilon}$ regularity required for the integral operator in \eqref{e:meancurvfloweqngraph} to be well defined and the solution $u$ to be classical.  We expect solutions to become $C^\infty$ for all times $t>R^{1+s}T$,  but we don't pursue that line of inquiry in this paper.  
%
%\end{remark}

\begin{remark}(Higher Regularity) We note that for times $t\geq R^{1+s}T$, our set $E_t$ will be a uniformly Lipschitz subgraph, and hence the evolution can equivalently be described by the uniformly parabolic equation \eqref{e:meancurvfloweqngraph}.  This allows the use of general regularity results for nonlocal parabolic equations such as in \cite{NonlocalHolderRegularity} and \cite{NonlocalSchauder}.  Applying these results to successive derivatives may lead to a $C^\infty$ estimate, but we do not pursue that line of inquiry in this paper.  
\end{remark}

\subsection{Background}

Nonlocal perimeters arises naturally in the context of phase transition problems with very long range interactions.  In \cite{GammaConvergence}, the authors consider the energies 
\begin{equation}
\mathcal{E}_s(U):= \epsilon^{s}||U||_{H^{s/2}(\Omega)}+\int\limits_{\Omega} W(U(X))dX,
\end{equation}
where $s\in (0,2)$ and $W$ is a standard double well potential.  They show that after appropriate rescaling, the functionals $\Gamma$-converge as $\epsilon\to 0$ to classical perimeter functional for $s\in [1,2)$, but converge to the $s$-fractional perimeter in the more nonlocal case $s\in (0,1)$.  

The $s$-fractional mean curvature flow was first defined in \cite{Threshold}, where the authors were investigating the convergence of the threshold dynamics for the fractional heat equation $U_t+(-\Delta_X)^{s/2}U=0$.  In the case that $s\in [1,2)$, the evolution of the interface $\{U(t,\cdot)=0\}$ converged to classical mean curvature flow, but for $s\in (0,1)$ it instead converges to $s$-fractional mean curvature flow.  

Motivated by these applications and the parallels to classical minimal surfaces, there has been a sustained effort over the past 10 years to study the regularity of local minimizers of fractional perimeter, $s$-minimal surfaces.   \cite{NonlocalMinimal} began the study, recovering a number of the tools from classical minimal surfaces such as density estimates, monotonicity formula, and the improvement of flatness argument.  Nonlocal minimal surfaces are known to be smooth whenever they are Lipschitz \cite{NonlocalLip}, smooth outside of a set of codimension 2 for any $s\in (0,1)$ \cite{NonlocalMinimal}, and for $s$ sufficiently close to 1 the singularity set in fact has codimension at least 8 \cite{NonlocalLimit}, matching the regularity theory for the local case.  

There are however key differences between the regularity theory for the nonlocal and the local cases.  Stable nonlocal minimal surfaces satisfy a universal BV estimate \cite{QuantitativeFlatness}, which is false without additional assumptions for classical minimal surfaces, and an important open problem with the appropriate additional assumptions for dimension $d>3$.  There is also an example of a nontrivial stable $s$-minimal cone in $\R^7$ for small $s$ \cite{NonlocalCone}, showing the regularity of nonlocal minimal surfaces is different than the classical case for $s$ bounded away from 1.  It is still an open problem though if this is the case for minimizing nonlocal minimal surfaces.

Since nonlocal mean curvature flow's introduction in \cite{Threshold}, properties of smooth solutions have been studied in \cite{SmoothCurvatureEquations} and radial self-shrinkers in \cite{SelfShrinkers}.  Most work however on fractional mean curvature flow has focused on the study of weak solutions via the level set method and the singularities they develop.  

The level set method was popularized in \cite{LevelSetMethod}, where it was used as a numerical tool to study the evolution of classical mean curvature flow past the point of singularities. This was made analytically rigorous by \cite{EvansMCF1}, and is an invaluable tool in the study of mean curvature flow.  The key insight of the level set method is to replace the evolution of the boundary $t\to \partial E_t$ with the evolutions of the zero level set of a function $t\to \{U(t,\cdot)=0\}$, where $U$ now solves a degenerate parabolic equation based on the original flow.  

The existence, uniqueness, and comparison principal for global viscosity solutions defined via the level set method for fractional mean curvature flow was first shown by \cite{CyrilFract}, and then later expanded to more general nonlocal and even crystalline flows in \cite{Chambolle, CrystallineChambolle}.  We review the definitions and essential results in the appendix.  

One type of singularity particular to level set flows is ``fattening."  It refers to when the level set $\{U(t,\cdot)=0\}$ which represent our ``boundary" develops a nonempty interior.  This corresponds to a lack of uniqueness in the geometric flow, as the set $\{U(t,\cdot)=0\}$ is no longer a boundary of any set, and the two boundaries $\partial \{U(t,\cdot<0\} \not = \partial \{U(t,\cdot)>0\}$ are both equally valid evolutions of the original $\partial E_0$.  As at most countably many level sets $\{U(t,\cdot)=\gamma\}$ can fatten, it is in some sense a rare phenomena.  However there has a been an intense study to see what kind of properties of the initial set $E_0$ rule out the possibility.  For fractional mean curvature flow, it was shown in \cite{Neckpinch} that a smooth simple closed curve can fatten, in contrast to Grayson's theorem for classical mean curvature flow.  The preprint \cite{Fattening} goes through a number of illustrative examples of when fattening does or does not occur for nonlocal flows, proving smooth strictly star convex sets don't fatten.  The ``strictness" on the strictly star convex assumption is necessary though, as \cite{Fattening} also gives an example of a star convex set which does fatten.  

In this paper, we circumvent the issue of fattening by instead showing that for generic $\gamma\in \R$, the level set $\{U(t,\cdot)=\gamma\}$ becomes a Lipschitz graph in finite time.  By approximation, this allows us to show that the extremal solutions $\partial \{U(t,\cdot)<0\}$ and $\partial \{U(t,\cdot)>0\}$ each independently regularize.

\subsection{Argument Outline}\label{s:proofdiscussion}

Our proof of Theorem \ref{t:main} is inspired by Kiselev's proof of eventual H\"older regularization for solutions to the supercritical Burger's equation in \cite{KiselevSurvey}.  There, Kiselev shows that solutions to 
\begin{equation}
\begin{split}
\partial_t u(t,x) + u(t,x)\partial_x u(t,x) + (-\Delta)^\alpha u(t,x)=0, 
\\ \alpha\in (0,1/2).
\end{split}
\end{equation}
becomes H\"older continuous in finite time.  A priori, this is surprising as solutions to the equation are known to develop shocks \cite{KiselevBurgers}.  For the proof, he showed that the equation propagated a family of moduli of continuity $$\omega(t,r) \approx  \delta(t)+Cr^\beta ,$$ where $\delta(0) > 2||u_0||_{L^\infty}$ and $\delta(T_{\alpha,\beta}) = 0$.  Thus the moduli of continuity gives no new information at time $t=0$, controls the size of shocks for $0<t<T_{\alpha,\beta}$, and then forces the solution to become $\beta$-H\"older continuous at time $t=T_{\alpha,\beta}$.  

Allowing $\omega(t,0)>0$ lets us apply the concept of a modulus of continuity to a discontinuous function.  But it makes just as much sense to apply it in this case to a multivalued function like the boundary of a set which can fold over itself.  Then at time $t=T$, satisfying the modulus forces the boundary to be graphical.  Our goal is to construct an explicit time dependent family of moduli of continuity and show that exactly this occurs for fractional mean curvature flow.  See the beginning of the next sectionfor a useful illustration, and a precise definition of what it should mean for a set to satisfy a modulus of continuity.

In order to make our proof the most clear and understandable, we first prove Theorem \ref{t:main} in the case that the flow $t\to E_t$ is smooth.  We begin by defining what it means for a set to have a modulus of continuity in Section 2, and showing that our assumption \eqref{e:u0setinclusion} is equivalent to assuming our initial set has a Lipschitz modulus of continuity.  In Section 3, we repeat the breakthrough argument of \cite{KiselevSurvey} to set up an eventual proof by contradiction.  In section 4, we make a number of curvature estimates and reduce the proof by contradiction to the construction of a modulus of continuity satisfying an integral inequality.  In section 5, we construct that modulus of continuity and finish the proof by contradiction, completing the proof in the smooth case.  

In sections 6 and 7, we extend the smooth proof to work in the viscosity solution framework.  In section 6, we prove a number of technical lemmas in order to formally justify the break through argument of section 3 and estimates of section 4 for almost every level set $\{U(t,\cdot)=\gamma\}$ under the additional assumption that our initial set $E_0$ is asymptotically flat.  In section 7, we then apply limiting arguments to apply the result to the boundary of every level set without the flatness assumption, completing the general proof.

%%%%%%%%%%%%%%%%%%%%%%%%%%%%%%%%%%%%%%%%%%%%%%%%%%%%%%%%%%%%%%%%%%%%%%%Section 2                     																							%%%%%%%%%%%%%%%%%%%%%%%%%%%%%%%%%%%%%%%%%%%%%%%%%%%%%%%%%%%%%%%%%%%%%%

\section{Moduli of Continuity for Sets}

\begin{figure}[!htb]
\includegraphics[]{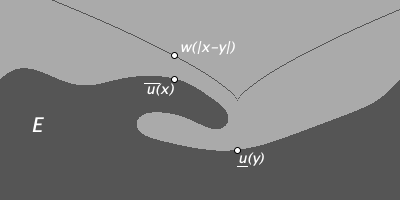}
\caption{\label{fig:modulus} A set $E$ with modulus $\omega(|x-y|)$}
\end{figure}

Our first step is to extend the idea of a modulus of continuity to a nongraphical set $E$.  With that in mind, we define what we call the upper and lower boundaries of a set by 
\begin{definition}\label{d:upperlowerboundary}(Upper and lower boundaries) Let $E\subseteq \R^d$.  Assume that for any $x\in \R^{d-1}$, the sets
\begin{equation}
\{x_d| (x,x_d)\in  \overline{E}\}, \qquad \{x_d| (x,x_d)\in \overline{\mathcal{C}E}\},
\end{equation}
are nonempty and bounded from above/below respectively.  Then we define the upper and lower boundaries of $E$ in the $x_d$-direction $\overline{u}, \underline{u}:\R^{d-1}\to \R$ to be 
\begin{equation}
\overline{u}(x) := \sup\{x_d| (x,x_d)\in  \overline{E}\}, \qquad \underline{u}(x):=\inf\{x_d| (x,x_d)\in \overline{\mathcal{C}E}\}.
\end{equation}
\end{definition}
Upper and lower boundaries could analogously be defined for any direction $e\in S^{d-1}$.  Without loss of generality, we restrict ourselves to the positive $x_d$-direction, which corresponds to thinking of our set $E$ as close to a subgraph.
  
Note that equivalently
\begin{equation}
\overline{u}(x) = \max\{x_d| (x,x_d)\in  \partial E\}, \qquad \underline{u}(x)=\min\{x_d| (x,x_d)\in \partial E\},
\end{equation}
once we know that our set $E$ both contains and is contained by a subgraph.  

\begin{definition} \label{d:boundarymodulusdefn}
Let $E\subseteq\R^d$ be a set with upper and lower boundaries in the $x_d$-direction, and $\omega:[0,\infty)\to [0,\infty)$ be a continuous function.  Then we say that $E$ has modulus $\omega$ in the $x_d$-direction if for all $x,y\in \R^{d-1}$, 
\begin{equation}
\overline{u}(x)-\underline{u}(y) \leq \omega(|x-y|).
\end{equation}
\end{definition}
See Figure \ref{fig:modulus} for a helpful visual.

Note that we don't force $\omega(0)=0$ in our definition of a modulus of continuity, which allows for this definition to make sense when $\partial E$ is not graphical.  Indeed, if $E$ has modulus $\omega$ with $\omega(0)=0$, then necessarily we have that 
\begin{equation}
\overline{u}(x) = \underline{u}(x) \ \forall x\in\R^{d-1}\quad \Rightarrow \quad \partial E = \text{graph}(u:\R^{d-1}\to \R).
\end{equation}

To begin, we first note that $\overline{u}, \underline{u}$ always have some underlying continuity:

\begin{proposition}\label{p:semicont}
Let $E\subseteq\R^d$ be a set with upper and lower boundaries $\overline{u}, \underline{u}$.  Then $\overline{u}, \underline{u}$ are upper/lower semicontinuous respectively.  
\end{proposition}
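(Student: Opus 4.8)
The plan is to show $\overline{u}$ is upper semicontinuous; the argument for $\underline{u}$ being lower semicontinuous is symmetric (replace $E$ by $\mathcal{C}E$, which swaps the roles of the two boundaries up to a sign, or simply repeat the argument with inequalities reversed). First I would fix $x_0 \in \R^{d-1}$ and a sequence $x_k \to x_0$, and set $a := \limsup_{k\to\infty} \overline{u}(x_k)$. The goal is to prove $a \le \overline{u}(x_0)$. If $a = -\infty$ there is nothing to prove, so assume $a > -\infty$ and pass to a subsequence (not relabeled) along which $\overline{u}(x_k) \to a$; I will also need to know $a < +\infty$, which follows because the hypothesis in Definition \ref{d:upperlowerboundary} that $\{x_d : (x,x_d)\in\overline E\}$ is bounded above for every $x$ is used together with the (implicitly assumed, or to be invoked) local boundedness — more carefully, in the setting of the paper $E$ is trapped between two subgraphs of Lipschitz functions, so $\overline{u}(x_k) \le u_0(x_k) + R/2$ stays bounded; I would state this or just carry the $a=+\infty$ case as vacuous if the convention is that $\overline u$ may take the value $+\infty$.

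The core step is a diagonal/closedness argument. By definition of $\overline{u}(x_k)$ as a supremum over the \emph{closed} set $\overline E$, for each $k$ there is a point $(x_k, x_{k,d}) \in \overline{E}$ with $x_{k,d} > \overline{u}(x_k) - 1/k$ (and $x_{k,d} \le \overline u(x_k)$). Then $x_{k,d} \to a$ as $k \to \infty$, so the sequence of points $(x_k, x_{k,d})$ converges in $\R^d$ to $(x_0, a)$. Since each $(x_k, x_{k,d}) \in \overline{E}$ and $\overline{E}$ is closed, the limit $(x_0, a) \in \overline{E}$. Hence $a \in \{x_d : (x_0, x_d) \in \overline E\}$, and therefore $a \le \sup\{x_d : (x_0,x_d)\in\overline E\} = \overline{u}(x_0)$, which is exactly upper semicontinuity at $x_0$.

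For $\underline{u}$, I would run the mirror image: fix $x_k \to x_0$, set $b := \liminf_k \underline{u}(x_k)$, pass to a subsequence realizing the liminf, pick $(x_k, y_{k,d}) \in \overline{\mathcal{C}E}$ with $y_{k,d} < \underline{u}(x_k) + 1/k$ and $y_{k,d}\ge\underline u(x_k)$ so that $y_{k,d} \to b$, use closedness of $\overline{\mathcal{C}E}$ to get $(x_0,b)\in\overline{\mathcal{C}E}$, and conclude $b \ge \inf\{x_d : (x_0,x_d)\in\overline{\mathcal{C}E}\} = \underline{u}(x_0)$, i.e. lower semicontinuity.

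The only real subtlety — the step I would flag as the main obstacle — is ensuring the extracted $x_d$-coordinates actually stay in a bounded region so that the limit point exists in $\R^d$ rather than escaping to $\pm\infty$; this is where one must invoke either the standing hypothesis of Definition \ref{d:upperlowerboundary} in combination with the enclosing-subgraph setup of the paper, or explicitly allow $\overline u$ (resp. $\underline u$) to take infinite values and check the semicontinuity inequality is trivially satisfied there. Everything else is a routine consequence of a supremum over a closed set being attained as a limit and of closedness being preserved under taking limits of sequences. I would keep the write-up to essentially the paragraph above, since no computation is involved.
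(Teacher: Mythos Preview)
Your proposal is correct and follows essentially the same approach as the paper: pass to a subsequence realizing the $\limsup$, use that the corresponding points lie in the closed set $\overline{E}$, and conclude the limit point $(x_0,a)\in\overline{E}$ so that $a\le\overline{u}(x_0)$. The only cosmetic difference is that the paper uses directly that the supremum defining $\overline{u}(x_k)$ is attained (the vertical slice of $\overline{E}$ is closed and bounded above), writing $(x_k,\overline{u}(x_k))\in\overline{E}$ instead of your $1/k$-approximation, and it does not explicitly address the boundedness of the subsequence as you do.
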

\begin{proof}
We show that $\overline{u}$ is upper semicontinuous.  Fix $x_0\in \R^{d-1}$, and let $x_n\to x_0$.   Without loss of generality, by passing to a subsequence we may suppose that $\lim\limits_{n\to \infty} \overline{u}(x_n)=L$.  Thus  as $(x_n, \overline{u}(x_n))\in \overline{E}$ for all $n$, we have that $(x_0,L)\in \overline{E}$ as well.  But then by the definition of $\overline{u}$, $\overline{u}(x_0)\geq L$.  Thus $\overline{u}(x_0)\geq \limsup\limits_{x\to x_0}\overline{u}(x)$, so it is upper semicontinuous.  
\end{proof}

A modulus of continuity can also be equivalently described on the level of sets as

\begin{definition}\label{d:setmodulusdefn}
Let $\omega: [0,\infty)\to [0,\infty)$ be a continuous function.  Then we say that an open (or closed) set $E\subseteq \R^d$ has modulus of continuity $\omega$ in the $x_d$-direction if for all $(z,z_d)\in \R^{d}$ with $z_d\geq \omega(|z|)$, 
\begin{equation}
E-(z,z_d)\subseteq E.
\end{equation}
\end{definition}

\begin{proposition}
Let $E\subseteq \R^d$ be an open set with upper and lower boundaries in the $x_d$-direction.  Then definitions \ref{d:boundarymodulusdefn} and \ref{d:setmodulusdefn} are equivalent.  
\end{proposition}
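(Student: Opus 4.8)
The plan is to prove both implications by unwinding the definitions and exploiting the semicontinuity established in Proposition \ref{p:semicont}. Throughout, write points of $\R^d$ as $(x,x_d)$ with $x\in\R^{d-1}$, and recall $\overline u(x)=\sup\{x_d:(x,x_d)\in\overline E\}$, $\underline u(x)=\inf\{x_d:(x,x_d)\in\overline{\mathcal CE}\}$. Since $E$ is open, a point $(x,x_d)$ lies in $E$ precisely when $x_d<\overline u(x)$ \emph{and} $\underline u(x)\leq x_d$ fails to obstruct it; more usefully, because $E$ is both contained in and contains a subgraph, one has the clean description $(x,x_d)\in E \iff x_d<\underline u(x)$ is sufficient, while $x_d>\overline u(x)$ forces $(x,x_d)\notin\overline E$. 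I would first record these elementary facts as the bridge between the pointwise functions and the set.

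First I would show Definition \ref{d:boundarymodulusdefn} $\Rightarrow$ Definition \ref{d:setmodulusdefn}. Fix $(z,z_d)$ with $z_d\geq\omega(|z|)$, and suppose $(x,x_d)\in E$; I must show $(x,x_d)+(z,z_d)\in E$, equivalently (translating the other way) that $(x-z,x_d-z_d)\in E$... actually with the stated convention $E-(z,z_d)\subseteq E$ means: if $(p,p_d)\in E$ then $(p,p_d)-(z,z_d)\in E$. So take $(p,p_d)\in E$; since $E$ is open and below its upper boundary, $p_d<\overline u(p)$, and in fact $p_d<\underline u(p)$ does the job once we note any point strictly below $\underline u$ at its $(d-1)$-coordinate is interior. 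Then at the shifted base point $p-z$ we estimate $p_d-z_d \leq \overline u(p)-z_d \leq \overline u(p)-\omega(|z|)\leq \overline u(p)-(\overline u(p)-\underline u(p-z))=\underline u(p-z)$, using the modulus inequality with the pair $p,\,p-z$ which are at distance $|z|$. Hence $p_d-z_d\leq\underline u(p-z)$; to get strict inequality and membership in the open set $E$ I would either argue that $p_d<\overline u(p)$ strictly propagates, or pass through a point slightly lower, using lower semicontinuity of $\underline u$. This is the one genuinely fiddly point — the open/closed boundary case and getting strict versus non-strict inequalities to line up — and I expect it to be the main obstacle; the cleanest route is probably to prove the equivalence first for closed $E$ using $\overline E$ and then deduce the open case by taking interiors, or to observe that $E-(z,z_d)\subseteq E$ is equivalent to $\overline E-(z,z_d)\subseteq\overline E$ for open $E$ with these boundary functions.

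For the converse, Definition \ref{d:setmodulusdefn} $\Rightarrow$ Definition \ref{d:boundarymodulusdefn}, fix $x,y\in\R^{d-1}$ and set $z=x-y$, $z_d=\omega(|z|)=\omega(|x-y|)$, so $(z,z_d)$ satisfies $z_d\geq\omega(|z|)$ and the hypothesis gives $\overline E-(z,z_d)\subseteq\overline E$ (again reducing to closures). Take any $a<\overline u(x)$; then $(x,a)\in\overline E$ (indeed in $E$), so $(x,a)-(z,z_d)=(y,\,a-\omega(|x-y|))\in\overline E$, whence $a-\omega(|x-y|)\leq\overline u(y)$. Actually I want a bound on $\underline u(y)$, so instead I would run the inclusion in the direction that relates $\overline E$ at $x$ to $\overline{\mathcal CE}$ at $y$: the containment $E-(z,z_d)\subseteq E$ is equivalent to $\mathcal CE+(z,z_d)\subseteq\mathcal CE$, i.e. $\overline{\mathcal CE}+(z,z_d)\subseteq\overline{\mathcal CE}$. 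So if $b>\underline u(y)$ then $(y,b)\in\overline{\mathcal CE}$, hence $(y,b)+(z,z_d)=(x,\,b+\omega(|x-y|))\in\overline{\mathcal CE}$, giving $\underline u(x)\leq b+\omega(|x-y|)$, i.e. $\underline u(x)-b\leq\omega(|x-y|)$. Combining the two directions yields $\overline u(x)-\underline u(y)\leq\omega(|x-y|)$ after letting $a\uparrow\overline u(x)$ and $b\downarrow\underline u(y)$, using the semicontinuity of Proposition \ref{p:semicont} to ensure the suprema/infima are attained in the limit. I would present the argument by first isolating the equivalence ``$E$ satisfies Def.\ \ref{d:setmodulusdefn}'' $\iff$ ``$\overline u(x)\leq\underline u(x')+\omega(|x-x'|)$ for all $x,x'$'' — a purely real-analysis statement about the hypographs — and then note this last inequality is exactly Definition \ref{d:boundarymodulusdefn}.
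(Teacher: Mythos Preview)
Your forward direction is correct in spirit but you have made it harder than necessary. You already have the strict inequality $p_d<\overline u(p)$ (since $E$ is open, $(p,\overline u(p))\notin E$), and strictness propagates immediately:
\[
p_d-z_d<\overline u(p)-z_d\leq\overline u(p)-\omega(|z|)\leq\underline u(p-z).
\]
Because $\mathcal CE$ is closed, $p_d-z_d<\underline u(p-z)$ gives $(p-z,p_d-z_d)\notin\mathcal CE$, i.e.\ $(p-z,p_d-z_d)\in E$. No closure/interior gymnastics are needed; this is exactly the paper's argument.

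Your converse, however, has a genuine gap. From $\overline E-(z,z_d)\subseteq\overline E$ you correctly extract $\overline u(x)-\overline u(y)\leq\omega(|x-y|)$, and from $\overline{\mathcal CE}+(z,z_d)\subseteq\overline{\mathcal CE}$ you get $\underline u(x)-\underline u(y)\leq\omega(|x-y|)$. But these two inequalities do \emph{not} combine to yield $\overline u(x)-\underline u(y)\leq\omega(|x-y|)$: take for instance $\overline u\equiv 1$, $\underline u\equiv 0$, $\omega(r)=r/2$. (Also, ``$b>\underline u(y)\Rightarrow(y,b)\in\overline{\mathcal CE}$'' is false in general; you need to choose a sequence $b_n\downarrow\underline u(y)$ with $(y,b_n)\in\overline{\mathcal CE}$.) The set inclusion in Definition~\ref{d:setmodulusdefn} naturally compares $\overline u$ to $\overline u$, or $\underline u$ to $\underline u$, along a single vertical translate---it does not directly compare $\overline u$ at one base point to $\underline u$ at another.

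The paper circumvents this by arguing the converse contrapositively: assume $\overline u(x_0)-\underline u(y_0)=\omega(|x_0-y_0|)+\epsilon$ for some $\epsilon>0$, pick $(x,x_d)\in E$ near $(x_0,\overline u(x_0))$, and exhibit the single vector $Z=(x-y_0,\,x_d-\underline u(y_0))$ with $z_d>\omega(|z|)$ for which $(x,x_d)-Z=(y_0,\underline u(y_0))\notin E$. This is the missing idea in your argument: one translation that lands exactly on a point of $\overline{\mathcal CE}$ while starting in $E$.
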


\begin{proof}
Assume $\overline{u}(x)-\underline{u}(y) \leq \omega(|x-y|)$ for all $x,y$, and fix some $Z\in \R^{d}$ with $z_d\geq \omega(|z|)$ and point $(x,x_d)\in E$.  Then by our assumption and the definition of $\overline{u}$ we have that 
\begin{equation}
x_d < \overline{u}(x) \leq \underline{u}(x-z) + \omega(|z|)\leq \underline{u}(x-z)+z_d.
\end{equation}
As $x_d-z_d<\underline{u}(x-z)$, we thus have by the definition of $\underline{u}$ that $(x-z, x_d-z_d)\in E$.  As $(x,x_d)\in E$ was arbitrary, we have that $E-(z,z_d)\subseteq E$.  Thus $E$ has modulus $\omega$ in the sense of Definition \ref{d:setmodulusdefn}.    

Conversely, suppose that $\overline{u}(x_0)-\underline{u}(y_0) = \omega(|x_0-y_0|)+\epsilon$ for some $x_0,y_0\in \R^{d-1}$ and $\epsilon>0$.  
As $(x_0, \overline{u}(x_0))\in \partial E$ and $\omega$ is continuous, we can find a point $(x,x_d)\in E$ such that $|\overline{u}(x_0)-x_d|<\epsilon/2$ and $|\omega(|x-y_0|)-\omega(|x_0-y_0|)| < \epsilon/2$.  Taking $Z = (x-y_0, x_d-\underline{u}(y_0))$, we have that 
\begin{equation}
x_d-\underline{u}(y_0)>\overline{u}(x_0)-\underline{u}(y_0)-\epsilon/2 = \omega(|x_0-y_0|)+\epsilon/2 > \omega(|x-y_0|), 
\end{equation}
but that $(x,x_d)-(x-y_0, x_d-\underline{u}(y_0)) = (y_0,\underline{u}(y_0))\not\in E$.  Thus $E$ does not have modulus $\omega$ in the sense of Definition \ref{d:setmodulusdefn}.  

\end{proof}

For most of our purposes, we'll be thinking about moduli of continuity in terms of the upper and lower boundaries, but the set definition works particularly will with the comparison principle and provides the easiest way to rigorously prove propagation of moduli for viscosity solutions.

\begin{proposition}\label{p:propagation}
Let $t\to E_t$ be the minimal viscosity supersolution .  Then $E_t$  has modulus $\omega$ for all time $t$.  
\end{proposition}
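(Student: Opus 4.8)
The plan is to deduce propagation of the modulus from the comparison principle (Proposition \ref{p:comparisonprinciple}) and translation invariance of the flow (Proposition \ref{p:fracmeancurvbasics}), working entirely with the set-theoretic characterization in Definition \ref{d:setmodulusdefn}. First I would observe that the hypothesis \eqref{e:u0setinclusion} on $E_0$ is exactly the statement that $E_0$ has a Lipschitz modulus $\omega_0(r) = R + L r$ (or more precisely that it lies between translates of a fixed Lipschitz subgraph), so the claim only makes sense once we fix such a $\omega$; and since Proposition \ref{p:propagation} as stated is a little informal, I would read it as: if $E_0$ has modulus $\omega$ in the $x_d$-direction, then $E_t$ has modulus $\omega$ for all $t\ge 0$.

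The main step: fix $(z,z_d)$ with $z_d \ge \omega(|z|)$. By Definition \ref{d:setmodulusdefn} applied to $E_0$, we have $E_0 - (z,z_d) \subseteq E_0$. Now apply translation invariance of the flow: $t \mapsto E_t - (z,z_d)$ is itself a viscosity supersolution (indeed solution) of \eqref{e:meancurvfloweqnset}, with initial datum $E_0 - (z,z_d)$. Since $E_0 - (z,z_d) \subseteq E_0$ and $t\mapsto E_t$ is the minimal viscosity supersolution, the comparison principle gives $E_t - (z,z_d) \subseteq E_t$ for every $t \ge 0$. As $(z,z_d)$ with $z_d \ge \omega(|z|)$ was arbitrary, $E_t$ has modulus $\omega$ in the sense of Definition \ref{d:setmodulusdefn}, hence also in the sense of Definition \ref{d:boundarymodulusdefn} by the preceding proposition (once we check $E_t$ still has upper and lower boundaries, which follows because it stays trapped between the translated subgraphs).

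The one subtlety — and the step I expect to be the real obstacle — is making the comparison argument fully rigorous within the minimal-viscosity-supersolution framework: one needs that the translate of the minimal supersolution is comparable to the minimal supersolution itself, i.e. that minimality is preserved under translation and that comparison holds between a supersolution and the translate of a supersolution when the initial inclusion is strict-or-not. This is where the level-set formulation is cleanest: if $U$ is the level-set solution with $E_t = \{U(t,\cdot) < 0\}$ (or the appropriate sub/superlevel set defining the minimal supersolution), then $U(t, \cdot + (z,z_d))$ solves the same level-set PDE, the ordering of initial data $U_0(\cdot + (z,z_d)) \le U_0$ (in the relevant sense) follows from $E_0 - (z,z_d) \subseteq E_0$, and the comparison principle for the degenerate parabolic level-set equation propagates this ordering. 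I would therefore carry out the argument at the level of $U$, invoking the existence/uniqueness/comparison results recalled in the appendix (cf. \cite{CyrilFract, Chambolle}), and only at the end translate back to the statement about sets. A secondary point worth a line is continuity of $\omega$: it is needed so that the passage between the two modulus definitions goes through and so that ``$z_d \ge \omega(|z|)$'' is a closed condition compatible with taking closures of $E_t$.
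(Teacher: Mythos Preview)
Your proposal is correct and follows essentially the same approach as the paper: fix $(z,z_d)$ with $z_d\ge\omega(|z|)$, use translation invariance so that the translated flow is again the minimal viscosity supersolution for the translated initial data, and apply the comparison principle (Proposition \ref{p:comparisonprinciple}) to propagate the set inclusion for all $t\ge 0$. The subtlety you flag about comparing translates is handled in the paper exactly as you suggest, by citing Proposition \ref{p:comparisonprinciple} together with translation invariance, so your extra caution about passing through the level-set formulation is unnecessary here but not incorrect.
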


\begin{proof}
The proof follows by translation invariance and the comparison principle for the minimal viscosity supersolution, which we prove in Proposition \ref{p:comparisonprinciple} in the appendix.  

Fix $Z=(z,z_d)\in \R^{d}$ with $z_d\geq \omega(|z|)$, and let $E_t(Z) = E_t +Z$.  Then by translation invariance, $t\to E_t(Z)$ is the minimal viscosity supersolution of fractional mean curvature flow for the initial data $E_0(Z)$.

As $E_0\subseteq E_0(Z)$ by Definition \ref{d:setmodulusdefn}, it follows by the comparison principle that $E_t \subseteq E_t(Z)$ for all $t\geq 0$.  Since $Z\in \R^{d}$ with $z_d\geq \omega(|z|)$ was arbitrary, we thus have that $E_t$ has modulus $\omega$ for all times $t$.  
\end{proof}

Thus as in the graphical case, comparison principle and translation invariance imply that any modulus of continuity is propagated.  
Rather than just propagation though, our goal is to show an {\emph{improvement}} in our modulus of continuity.  

We now turn our attention to Theorem \ref{t:main}.  There we assume that our initial set $E_0$ is bounded between two Lipschitz subgraphs.  Since our plan is to describe an improvement in the modulus of continuity of $E_t$, it will be more convenient for us to translate this assumption into one about the modulus of continuity of $E_0$ directly.  Luckily, these are equivalent notions.

\begin{proposition}
Let $E\subseteq \R^d$ be an open set.  Then $E$ has modulus of continuity $\omega(r) = R+Lr$ in the $x_d$-direction if and only if there exists a Lipschitz function $u\in \dot{W}^{1,\infty}(\R^{d-1})$ with $||\nabla u||_{L^\infty}\leq L$ and 
\begin{equation}\label{e:usetinclusion}
\left\{(x,x_d)\bigg| x\in\R^{d-1}, x_d< u(x)-\frac{R}{2}\right\} \subseteq E\subseteq \left\{(x,x_d)\bigg| x\in\R^{d-1}, x_d < u(x)+\frac{R}{2}\right\},
\end{equation}
\end{proposition}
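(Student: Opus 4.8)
The proposition is an equivalence between two ways of saying "$E$ is sandwiched between two parallel-ish Lipschitz subgraphs." I would prove it by relating both sides to the upper and lower boundaries $\overline u, \underline u$ of $E$ (Definition \ref{d:upperlowerboundary}), using the equivalence of Definitions \ref{d:boundarymodulusdefn} and \ref{d:setmodulusdefn} already established. The candidate function in the forward direction should be the "midpoint" of the two boundaries, suitably regularized; in the reverse direction one simply checks that $\omega(r)=R+Lr$ is a modulus in the sense of Definition \ref{d:boundarymodulusdefn}.

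\emph{Reverse direction ($\Leftarrow$).} Suppose \eqref{e:usetinclusion} holds for some $u$ with $\|\nabla u\|_{L^\infty}\le L$. Then from the inclusions one reads off that $\underline u(x)\ge u(x)-R/2$ and $\overline u(x)\le u(x)+R/2$ for every $x\in\R^{d-1}$ (the lower inclusion forces every point below $u(x)-R/2$ to be in $E$, so $\underline u(x)$, the infimal height of $\overline{\mathcal C E}$, is $\ge u(x)-R/2$; dually for $\overline u$). Hence for all $x,y$,
\begin{equation}
\overline u(x)-\underline u(y)\le \big(u(x)+\tfrac R2\big)-\big(u(y)-\tfrac R2\big)= R + \big(u(x)-u(y)\big)\le R+L|x-y|=\omega(|x-y|),
\end{equation}
using that $u$ is $L$-Lipschitz. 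By Definition \ref{d:boundarymodulusdefn} and the proposition equating the two modulus definitions, $E$ has modulus $R+Lr$.

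\emph{Forward direction ($\Rightarrow$).} Suppose $E$ has modulus $\omega(r)=R+Lr$. First, taking $x=y$ in Definition \ref{d:boundarymodulusdefn} gives $\overline u(x)-\underline u(x)\le R$; taking general $x,y$ and rearranging gives $\overline u(x)-\overline u(y)\le R+L|x-y|$ and similarly $\underline u(x)-\underline u(y)\le R+L|x-y|$, but these are not yet Lipschitz bounds because of the additive $R$. The standard fix is to define
\begin{equation}
u(x):=\inf_{y\in\R^{d-1}}\big(\overline u(y)+L|x-y|\big)-\tfrac R2,
\end{equation}
the Lipschitz (inf-convolution) regularization shifted down by $R/2$. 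This $u$ is automatically $L$-Lipschitz. I then need to verify the two inclusions in \eqref{e:usetinclusion}. For the right inclusion, since $u(x)+R/2=\inf_y(\overline u(y)+L|x-y|)\ge \overline u(x)\cdot 0 + \ldots$ — more carefully, I must show $u(x)+R/2\ge \overline u(x)$, i.e. $\overline u(y)+L|x-y|\ge \overline u(x)$ for all $y$; this is exactly $\overline u(x)-\overline u(y)\le L|x-y|$, which is \emph{not} what the modulus gives directly. Here is where the nonlocal structure helps: the true bound available is $\overline u(x)-\underline u(y)\le R+L|x-y|$, and combined with $\underline u(y)\le \overline u(y)$ one only gets the weaker statement. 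So I will instead choose the regularization so that $u+R/2$ lies \emph{above} $\overline u$ and $u-R/2$ lies \emph{below} $\underline u$ simultaneously; the correct choice is to interpolate, e.g. $u(x):=\inf_y\big(\underline u(y)+\tfrac R2 + L|x-y|\big)$, and then check (i) $u(x)-\tfrac R2=\inf_y(\underline u(y)+L|x-y|)\le \underline u(x)$ trivially (take $y=x$), giving the left inclusion, and (ii) $u(x)+\tfrac R2 = \inf_y(\underline u(y)+R+L|x-y|)\ge \overline u(x)$, which is precisely the modulus inequality $\overline u(x)\le \underline u(y)+R+L|x-y|$ holding for every $y$. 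Both inclusions then follow from the definitions of $\overline u,\underline u$ (for the right inclusion one uses upper semicontinuity from Proposition \ref{p:semicont}, or works with the open-set formulation directly), and $u$ is $L$-Lipschitz as an inf of $L$-Lipschitz functions. One should double-check the hypotheses of Definition \ref{d:upperlowerboundary} — that $\overline u,\underline u$ are finite everywhere — which follows from the modulus bound anchoring their values at any single point.

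\emph{Main obstacle.} The only real subtlety is the forward direction: naively one wants $u$ to be "halfway between" $\overline u$ and $\underline u$, but the modulus controls only the cross-difference $\overline u(x)-\underline u(y)$, not $\overline u(x)-\overline u(y)$ or $\underline u(x)-\underline u(y)$ by themselves with a clean Lipschitz constant. Resolving this by taking the inf-convolution of $\underline u + R/2$ against $L|\cdot|$ (rather than of $\overline u$, or of a literal average) is the key observation, and then both inclusions reduce directly to the hypothesized modulus inequality. The boundary-versus-closure bookkeeping — matching Definition \ref{d:setmodulusdefn}'s open/closed statement with the strict/non-strict inequalities defining $\overline u$ and $\underline u$ — is routine but should be handled with the semicontinuity of Proposition \ref{p:semicont} and the already-proven equivalence of the two modulus definitions.
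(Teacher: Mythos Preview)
Your proposal is correct and, after the self-correction in the forward direction, lands on exactly the same construction as the paper: $u(x)=\tfrac{R}{2}+\inf_{y}\big(\underline u(y)+L|x-y|\big)$, with the two inclusions verified via $y=x$ and the modulus inequality respectively, and $L$-Lipschitzness inherited from the infimum. The reverse direction is likewise identical to the paper's argument.
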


\begin{proof}
To begin, suppose that $E$ has modulus $\omega(r)=R+Lr$.  Then define $u:\R^{d-1}\to \R$ by 
\begin{equation}
u(x) = \frac{R}{2}+\inf\limits_{y\in \R^{d-1}} \underline{u}(y) + L|x-y|.
\end{equation}
Note that $u$ is a well defined function, as for any $y\in \R^{d-1}$
\begin{equation}
\underline{u}(y)+L|x-y| \geq \overline{u}(x)-\omega(|x-y|)+L|x-y| \geq \overline{u}(x)-R.
\end{equation}
Hence $u(x)$ exists with
\begin{equation}
u(x)+\frac{R}{2}\geq \overline{u}(x), 
\end{equation}
so
\begin{equation}
E\subseteq \{(x,x_d)| x_d<\overline{u}(x)\}\subseteq \left\{(x,x_d)\bigg| x_d<u(x)+\frac{R}{2}\right\}.
\end{equation}
As we also have by construction that $u(x)-\displaystyle\frac{R}{2}\leq \underline{u}(x)$, we also get the reverse inclusion 
\begin{equation}
\left\{(x,x_d)\bigg| x_d < u(x)-\frac{R}{2}\right\}\subseteq E.
\end{equation}
Finally as $u$ is the infimum of $L$-Lipschitz functions, it follows that $u$ is $L$-Lipschitz itself.  Thus we've shown the first direction.  

For the converse, suppose that $u\in \dot{W}^{1,\infty}(\R^{d-1})$ with $||\nabla u||_{L^\infty}\leq L$ satisfies \eqref{e:usetinclusion}.  Then since $u$ is continuous, it follows that 
\begin{equation}
\overline{E}\subseteq \left\{(x,x_d)\bigg| x_d\leq u(x)+\frac{R}{2}\right\}, \qquad \left\{(x,x_d)\bigg| x_d\geq u(x)-\frac{R}{2}\right\}\subseteq \mathcal{C}E.  
\end{equation}
Hence, $\overline{u}, \underline{u}$ are well defined with $\overline{u}\leq u+\displaystyle\frac{R}{2}$ and $\underline{u}\geq u-\displaystyle\frac{R}{2}$.  Thus for any $x,y\in \R^{d-1}$, 
\begin{equation}
\overline{u}(x)-\underline{u}(y) \leq u(x)-u(y)+R \leq R+L|x-y|.
\end{equation}
Thus $E$ has modulus of continuity $\omega(r) = R+L|x-y|$.  
\end{proof}

To make our strategy for the proof of Theorem \ref{t:main} more clear and understandable, let's first consider the case that we have a smooth, open initial data $E_0$ which has modulus
\begin{equation}
\omega(r) = R+Lr.
\end{equation}
and the flow $t\to E_t$ is smooth and exists for all time $t$. By rescaling the flow $t\to \displaystyle\frac{1}{R}E_{R^{1+s}t}$, we can assume without loss of generality that $R=1$.  Our goal then is to find a time dependent family of moduli of continuity $\omega:[0,\infty)\times [0,\infty)\to [0, \infty)$ satisfying

\begin{equation}\label{e:omegaassumptions}
\left\{\begin{array}{ll}
1). & \omega(0,r)>1+Lr \text{ for all } r \text{ and }\omega(t,r)>1+Lr \text{ for all }r>2 \text{ and times }t\in [0,T)
\\2). & \omega(t,\cdot) \text{ is } C^{1,1}, 0\leq \partial_r \omega(t,\cdot)\leq 1+L, \text{ with } \omega(t,0)>0, \text{ and } \partial_r \omega(t,0)=0  \text{ for all }t\in [0,T),
\\ 3). & \omega(T,\cdot) \text{ is }(1+L)\text{-Lipschitz with } \omega(T,0)=0, \end{array}\right.
\end{equation}
for some time $T$.  Then proving $E_t$ has modulus $\omega(t,\cdot)$ for all $t\in [0,T]$ would prove the Theorem \ref{t:main} for smooth flows.

%%%%%%%%%%%%%%%%%%%%%%%%%%%%%%%%%%%%%%%%%%%%%%%%%%%%%%%%%%%%%%%%%%%%%%%Section 3                     																							%%%%%%%%%%%%%%%%%%%%%%%%%%%%%%%%%%%%%%%%%%%%%%%%%%%%%%%%%%%%%%%%%%%%%%

\section{Breakthrough Argument}

Let $\omega(t,r)$ be a time dependent family of moduli of continuity satisfying out assumptions \eqref{e:omegaassumptions}.  

Let $E_0\subseteq \R^d$ be a smooth open set satisfying the modulus $1+Lr$, 
and assume the flow $t\to E_t$ is smooth and exists for all time $t$. 
Assume additionally that $E_0$ is flat at infinity, so 
\begin{equation}
\partial E_0 \setminus (B_M^{d-1}\times \R) = \{(x,0): |x|\geq M\},
\end{equation}
for some $M>0$.  Letting $\overline{u}(t,\cdot), \underline{u}(t,\cdot)$ denote the upper and lower boundaries of $E_t$, it then follows by Proposition \ref{p:limitlemma} that 
\begin{equation}\label{e:decay}
\lim\limits_{|x|\to \infty}\overline{u}(t,x)=\lim\limits_{|x|\to \infty}\underline{u}(t,x) = 0, \qquad \text{ uniformly for }t\in [0,T].
\end{equation}  
Furthermore by the Proposition \ref{p:propagation}, $E_t$ will have modulus $1+Lr$ for all times $t$.  

By assumption 1). in \eqref{e:omegaassumptions}, we know automatically that $E_0$ has modulus $\omega(0,\cdot)$.  Since the flow is smooth, it follows by continuity that $E_t$ will have modulus $\omega(t,\cdot)$ for sufficiently small times $t$.  

Suppose that $E_t$ loses the modulus $\omega(t,\cdot)$ before time $T$.  Let 
\begin{equation}\label{e:t_0defn}
t_0 = \sup\{t\in [0,T]: E_t \text{ has modulus } \omega(t,\cdot)\}.
\end{equation}
Then since have the modulus of continuity $\omega(t,\cdot)$ is a closed condition, we know that $E_{t_0}$ has modulus $\omega(t_0, \cdot)$.  

Suppose that $E_{t_0}$ strictly had the modulus $\omega$.  That is, for any $x,y\in \R^{d-1}$ 
\begin{equation}
\overline{u}(t_0,x)-\underline{u}(t_0, y) < \omega(t_0, |x-y|).
\end{equation}
We will show that in this case that $E_{t_0+\epsilon}$ has the modulus $\omega(t_0+\epsilon, \cdot)$ for $\epsilon$ sufficiently small, contradicting the definition of $t_0$.  

Let $\delta = \min\{\omega(t,0): 0\leq t\leq \frac{t_0+T}{2}\}$.  
By \eqref{e:decay} we have that there is some $R>0$ such that for $|x|,|y|> R$,
\begin{equation}
\overline{u}(t_0+\epsilon,x) - \delta/3<0 < \underline{u}(t_0+\epsilon,y)+\delta/3,
\end{equation}
for any $0\leq \epsilon \leq \frac{T-t_0}{2}$.  

Now suppose that $|x|>R+2$.  Then for any $y\in \R^{d-1}$, we have that either $|y|>R$ or $|x-y|>2$.  If $|x-y|>2$, then by assumption 1). of \eqref{e:omegaassumptions}
\begin{equation}
\overline{u}(t_0+\epsilon,x)-\underline{u}(t_0+\epsilon,y)\leq 1+L|x-y|< \omega(t_0+\epsilon,|x-y|),
\end{equation}
for any $\epsilon$.  If $|y|>R$, then similarly we have for any $0\leq \epsilon\leq \frac{T-t_0}{2}$
\begin{equation}
\overline{u}(t_0+\epsilon,x)-\underline{u}(t_0+\epsilon, y) <2\delta/3 < \omega(t_0+\epsilon,0)\leq \omega(t_0+\epsilon, |x-y|).
\end{equation}

A symmetric argument clearly works in the case that $|y|>R+2$.  Thus the only thing that remains to show that $E_{t_0+\epsilon}$ has modulus $\omega(t_0+\epsilon,\cdot)$ is to consider the case when both $|x|, |y|\leq R+2$.  

%\begin{proposition}\label{p:semiconttime}
% $\overline{u}, \underline{u}$ are upper/lower semicontinuous functions on $[0,\infty)\times \R^{d-1}$.
%\end{proposition}
%Given the proposition, it follows immediately by uniform semicontinuity that 
%\begin{equation}
%\overline{u}(t_0+\epsilon,x)-\underline{u}(t_0+\epsilon,y)<\omega(t_0+\epsilon,|x-y|),
%\end{equation}
%for $\epsilon$ sufficiently small.  Hence, $E_{t_0+\epsilon}$ has modulus $\omega(t_0+\epsilon, \cdot)$ for $\epsilon$ sufficiently small, violating the definition of $t_0$.  

We know from Proposition \eqref{p:semicont} that $\overline{u}(t,\cdot), \underline{u}(t,\cdot)$ are upper/lower semicontinuous in space respectively.  Since by assumption the flow $t\to E_t$ is smooth, they will be semicontinuous in time as well.   Thus by uniform semicontinuity, 
\begin{equation}
\overline{u}(t_0+\epsilon,x)-\underline{u}(t_0+\epsilon,y)<\omega(t_0+\epsilon,|x-y|),
\end{equation}
for $\epsilon$ sufficiently small when $|x|, |y|<R+2$.  Hence, $E_{t_0+\epsilon}$ has modulus $\omega(t_0+\epsilon, \cdot)$ for $\epsilon$ sufficiently small, violating the definition of $t_0$ \eqref{e:t_0defn}.

Thus if the set $E_t$ was to lose the modulus $\omega(t, \cdot)$ before time $T$, then necessarily there must be two points $x,y\in \R^{d-1}$ such that 
\begin{equation}
\overline{u}(t_0,x) - \underline{u}(t_0,y) = \omega(t_0, |x-y|).
\end{equation}
We will show in sections 3 and 4 that in this case for the right choice of family $\omega(t,r)$, 
\begin{equation}
\partial_t \overline{u}(t_0,x)-\partial_t \underline{u}(t_0,y) < \partial_t \omega(t_0, |x-y|),
\end{equation}
contradicting the fact that $E_t$ had the modulus before time $t_0$.  

%Proof of Proposition \ref{p:semiconttime}:
%
%\begin{proof}
%We show that $\overline{u}(t,x)$ is upper semicontinuous.  Fix some $(t_0, x_0)\in [0,\infty)\times \R^{d-1}$, and suppose that $(t_n, x_n)\to (t_0, x_0)$ as $n\to \infty$.  Let $L = \limsup\limits_{n\to \infty} \overline{u}(t_n, x_n)$, and by passing to a subsequence assume without loss of generality that $\overline{u}(t_n, x_n)\to L$.  Let $\mathcal{K} = \bigcup\limits_{t\geq 0} \{t\}\times \overline{E_t}$, which is a closed set.  Then as $(t_n, x_n, \overline{u}(t_n,x_n))\in \mathcal{K}$ is a convergent sequence, we have that $(t_0, x_0, L)\in \mathcal{K}$.  Hence $(x_0, L)\in \overline{E_{t_0}}$.  Thus by the definition of $\overline{u}$, $L\leq \overline{u}(t_0,x_0)$.  So $\overline{u}$ is upper semicontinuous.  
%\end{proof}

%%%%%%%%%%%%%%%%%%%%%%%%%%%%%%%%%%%%%%%%%%%%%%%%%%%%%%%%%%%%%%%%%%%%%%%Section 4                      																							%%%%%%%%%%%%%%%%%%%%%%%%%%%%%%%%%%%%%%%%%%%%%%%%%%%%%%%%%%%%%%%%%%%%%%

\section{Curvature Estimates}

Everything from now on will be at a fixed time $t_0\in (0,T)$, so we will simply suppress the time variable.  Our standing assumption is that the open set $E=E_{t_0}$ has some modulus $\omega(\cdot)$ satisfying 
\begin{equation}\label{e:omegabound}
\left\{\begin{array} {ll}1). &\omega(0)>0 \text{ and }\omega(r)>1+Lr \text{ for } r>2,
\\ 2). & \omega(\cdot) \text{ is } C^{1,1} \text{ with } 0\leq \omega'(\cdot) \leq (1+L) \text{ and }\omega'(0)=0,
\\ 3). & \overline{u}(x)-\underline{u}(y)\leq \omega(|x-y|), \quad \forall x,y\in \R^{d-1},
\\4). & \overline{u}(\frac{\xi}{2})-\underline{u}(\frac{-\xi}{2}) = \omega(|\xi|),\end{array}\right. 
\end{equation}
for some $\xi\in \R^{d-1}$ with $|\xi|\leq 2$ .

Our goal is to use our assumptions \eqref{e:omegabound} and the equation \eqref{e:meancurvfloweqnset} to bound the difference between $\partial_t\overline{u}(\xi/2)-\partial_t\underline{u}(-\xi/2)$ from above in terms of $\omega$.  

To begin, we're first going to derive the proper equation for $\overline{u},\underline{u}$.  Note that they are respectively upper and lower semicontinuous, and locally smooth (since $E$ is smooth) whenever the outward unit normal doesn't lie in the $\R^{d-1}$ plane.   As $\overline{u}$ is touched from above by $\omega$ and $\underline{u}$ is touched from below by $-\omega$, this is necessarily the case so we thus have that 
\begin{equation}
\nabla \overline{u}\left(\frac{\xi}{2}\right) = \nabla \underline{u}\left(\frac{-\xi}{2}\right) =  \omega'(|\xi|) \frac{\xi}{|\xi|}.
\end{equation}
Note that as $\omega'(0)=0$ by 2) in \eqref{e:omegabound}, this is still valid when $\xi=0$.  

The point $(\frac{\xi}{2},\overline{u}(\frac{\xi}{2}))\in \partial E$ thus has outward unit normal $\displaystyle\frac{( -\omega'(|\xi|)\frac{\xi}{|\xi|}, 1)}{\sqrt{1+\omega'(|\xi|)^2}}$, so from the fractional mean curvature flow equation \eqref{e:meancurvfloweqnset} we get that 

\begin{equation}
\partial_t \overline{u}\left(\frac{\xi}{2}\right) = s(1-s)\sqrt{1+ \omega'(|\xi|)^2} P.V. \int\limits_{\R^{d-1}}\int\limits_\R \frac{\1_E^\pm\left(\frac{\xi}{2}+z, \overline{u}\left(\frac{\xi}{2}\right)+z_d\right)}{(|z|^2+z_d^2)^{(d+s)/2}}dz_d dz,
\end{equation}
where $\1_E^{\pm}(X) = \1_E(X) -\1_{\mathcal{C}E}(X)$ is the signed characteristic function.  Similarly,  
\begin{equation}
\partial_t \underline{u}\left(\frac{-\xi}{2}\right) = s(1-s)\sqrt{1+ \omega'(|\xi|)^2}  P.V.\int\limits_{\R^{d-1}}\int\limits_\R \frac{\1^\pm_E\left(\frac{-\xi}{2}+z, \underline{u}\left(\frac{-\xi}{2}\right)+z_d\right)}{(|z|^2+z_d^2)^{(d+s)/2}}dz_d dz.
\end{equation}

Taking the difference and moving like constants to the other side, we thus have that 
\begin{equation}\label{e:preciseboundarydiff}
\frac{\partial_t \overline{u}\left(\frac{\xi}{2}\right)-\partial_t \underline{u}\left(\frac{-\xi}{2}\right)}{s(1-s)\sqrt{1+\omega'(|\xi|)^2}} = P.V. \int\limits_{\R^{d-1}}\int\limits_\R \frac{\1^\pm_E\left(\frac{\xi}{2}+z, \overline{u}\left(\frac{\xi}{2}\right)+z_d\right)-\1^\pm_E\left(\frac{-\xi}{2}+z, \underline{u}\left(\frac{-\xi}{2}\right)+z_d\right)}{(|z|^2+z_d^2)^{(d+s)/2}}dz_d dz.
\end{equation}

\begin{remark}
Its important to note that here we are implicitly taking advantage of the fact that in the smooth case, $H_s(X,E) = H_s(X,\overline{E})$.  This will no longer be true in general for viscosity solutions, and that difference is the source of most of the extra technical difficulties in that regime.
\end{remark}

\begin{lemma}\label{l:fixedzestimate}
Let $E\subseteq \R^d$ be an open set with modulus $\omega$ satisfying \eqref{e:omegabound}.  Then 
\begin{equation}\label{e:nonnegative}
\1^\pm_E\left(\frac{\xi}{2}+z, \overline{u}\left(\frac{\xi}{2}\right)+z_d\right)-\1^\pm_E\left(\frac{-\xi}{2}+z, \underline{u}\left(\frac{-\xi}{2}\right)+z_d\right)\leq 0. 
\end{equation}
Furthermore, for any $0\not = z\in \R^{d-1}$, 
\begin{equation}\label{e:firstbound}
\begin{split}
\int\limits_\R &\frac{\1^\pm_E\left(\frac{\xi}{2}+z, \overline{u}\left(\frac{\xi}{2}\right)+z_d\right)-\1^\pm_E\left(\frac{-\xi}{2}+z, \underline{u}\left(\frac{-\xi}{2}\right)+z_d\right)}{(|z|^2+z_d^2)^{(d+s)/2}}dz_d 
\\ &\qquad  \leq \int\limits_{\underline{u}\left(\frac{\xi}{2}+z\right)-\overline{u}\left(\frac{\xi}{2}\right)}^{\overline{u}\left(\frac{-\xi}{2}+z\right)-\underline{u}\left(\frac{-\xi}{2}\right)} \frac{\1^\pm_E\left(\frac{\xi}{2}+z, \overline{u}\left(\frac{\xi}{2}\right)+z_d\right)-\1^\pm_E\left(\frac{-\xi}{2}+z, \underline{u}\left(\frac{-\xi}{2}\right)+z_d\right)}{(|z|^2+\omega(|z|)^2)^{(d+s)/2}}dz_d
\end{split}
\end{equation}
\end{lemma}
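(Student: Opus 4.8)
The plan is to obtain the pointwise sign inequality \eqref{e:nonnegative} directly from the hypothesis that $E$ has modulus $\omega$, and then to combine it with the elementary observation that $E$ contains every point strictly below $\underline{u}$ and avoids every point strictly above $\overline{u}$, in order to localize the inner integral in $z_d$ and replace the kernel $(|z|^2+z_d^2)^{-(d+s)/2}$ by a constant.

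For \eqref{e:nonnegative}, write $A=(\frac{\xi}{2}+z,\ \overline{u}(\frac{\xi}{2})+z_d)$ and $B=(\frac{-\xi}{2}+z,\ \underline{u}(\frac{-\xi}{2})+z_d)$. By assumption $4)$ of \eqref{e:omegabound}, $A-B=(\xi,\ \overline{u}(\frac{\xi}{2})-\underline{u}(\frac{-\xi}{2}))=(\xi,\omega(|\xi|))$, which is a vector $(w,w_d)$ with $w_d=\omega(|w|)$. Since assumption $3)$ of \eqref{e:omegabound} says precisely that $E$ has modulus $\omega$ in the sense of Definition \ref{d:setmodulusdefn}, we get $E-(A-B)\subseteq E$, that is $A\in E\Rightarrow B\in E$. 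As $\1^\pm_E=2\1_E-1$, this is exactly $\1^\pm_E(A)\le\1^\pm_E(B)$, which is \eqref{e:nonnegative}.

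For \eqref{e:firstbound}, abbreviate the common numerator in \eqref{e:firstbound} as $f(z_d)$ and set $a:=\underline{u}(\frac{\xi}{2}+z)-\overline{u}(\frac{\xi}{2})$ and $b:=\overline{u}(\frac{-\xi}{2}+z)-\underline{u}(\frac{-\xi}{2})$. If $z_d<a$, then the last coordinate of $A$ is strictly below $\underline{u}(\frac{\xi}{2}+z)$, so $A\notin\overline{\mathcal{C}E}\supseteq\mathcal{C}E$ and hence $A\in E$, giving $\1^\pm_E(A)=1$; by \eqref{e:nonnegative} this forces $\1^\pm_E(B)=1$ and $f(z_d)=0$. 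Symmetrically, if $z_d>b$ the last coordinate of $B$ exceeds $\overline{u}(\frac{-\xi}{2}+z)$, so $B\notin\overline{E}$, $\1^\pm_E(B)=-1$, and \eqref{e:nonnegative} forces $f(z_d)=0$. Thus $f$ is supported in $[a,b]$; if $a>b$ then $f\equiv0$ and both sides of \eqref{e:firstbound} vanish, so we may assume $a\le b$. Applying assumption $3)$ of \eqref{e:omegabound} to the pairs $(\frac{\xi}{2},\frac{\xi}{2}+z)$ and $(\frac{-\xi}{2}+z,\frac{-\xi}{2})$, each at distance $|z|$, gives $a\ge-\omega(|z|)$ and $b\le\omega(|z|)$, so $[a,b]\subseteq[-\omega(|z|),\omega(|z|)]$ and therefore $|z_d|\le\omega(|z|)$ wherever $f\neq0$.

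To conclude, on $[a,b]$ we have $|z|^2+z_d^2\le|z|^2+\omega(|z|)^2$, so since $f(z_d)\le0$ it follows that $\frac{f(z_d)}{(|z|^2+z_d^2)^{(d+s)/2}}\le\frac{f(z_d)}{(|z|^2+\omega(|z|)^2)^{(d+s)/2}}$; integrating over $z_d\in\R$ and using that $f$ vanishes outside $[a,b]$ yields \eqref{e:firstbound}. The one point requiring care — where a sign slip is easy to make — is recognizing that the relevant range of $z_d$ all lies in $[-\omega(|z|),\omega(|z|)]$, rather than outside it, so that monotonicity of $t\mapsto t^{(d+s)/2}$ together with the sign of $f$ makes the inequality point in the claimed direction; everything else is bookkeeping of which half-line of each vertical fiber belongs to $E$ versus $\mathcal{C}E$.
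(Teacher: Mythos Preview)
Your proof is correct and follows essentially the same approach as the paper: you derive \eqref{e:nonnegative} from the set-level modulus condition via the shift vector $(\xi,\omega(|\xi|))$, localize the support of the integrand to $[a,b]\subseteq[-\omega(|z|),\omega(|z|)]$ using the definitions of $\overline{u},\underline{u}$ and assumption~3), and then replace the kernel by its constant lower bound on that interval using the nonpositivity of the integrand. The only cosmetic difference is that you name the points $A,B$ and the bounds $a,b$ explicitly and treat the degenerate case $a>b$ separately, while the paper writes the chain of equalities and the kernel replacement more tersely.
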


\begin{proof}
%To begin, fix some $z\in \R^{d-1}$, $z\not=0$.  
%Our goal is to bound 
%\begin{equation}\label{e:fixedz}
% \frac{\1^\pm_E\left(\frac{\xi}{2}+z, \overline{u}\left(\frac{\xi}{2}\right)+z_d\right)-\1^\pm_E\left(\frac{-\xi}{2}+z, \underline{u}\left(\frac{-\xi}{2}\right)+z_d\right)}{(|z|^2+z_d^2)^{(d+s)/2}}
%\end{equation}
To begin, note that the fact that $E$ has modulus $\omega$ and 4). in \eqref{e:omegabound} immediately implies
\begin{equation}
E-\left(\frac{\xi}{2}, \overline{u}\left(\frac{\xi}{2}\right)\right)\subseteq E+\left(\xi-\frac{\xi}{2}, \omega(|\xi|)-\overline{u}\left(\frac{\xi}{2}\right)\right) = E -\left(\frac{-\xi}{2}, \underline{u}\left(\frac{-\xi}{2}\right)\right), 
\end{equation}
and hence \eqref{e:nonnegative}.  

This is just a reflection of Proposition \ref{p:propagation}.  In order to turn this into a quantitative statement, we'll have to rely on the definitions of $\overline{u}, \underline{u}$ ( see Definition \ref{d:upperlowerboundary} ) and assumption 3). of \eqref{e:omegabound}.   

Now fix some $z\not = 0$.  By \eqref{e:nonnegative}, we get immediately that if $\1^\pm_E\left(\frac{\xi}{2}+z, \overline{u}\left(\frac{\xi}{2}\right)+z_d\right)=1$ then $1^\pm_E\left(\frac{-\xi}{2}+z, \underline{u}\left(\frac{-\xi}{2}\right)+z_d\right)=1$, and if $\1^\pm_E\left(\frac{-\xi}{2}+z, \underline{u}\left(\frac{-\xi}{2}\right)+z_d\right)=-1$ then $\1^\pm_E\left(\frac{\xi}{2}+z, \overline{u}\left(\frac{\xi}{2}\right)+z_d\right)=-1$.  In particular, by the definition of $\underline{u}\left(\frac{\xi}{2}+z\right), \overline{u}\left(\frac{-\xi}{2}+z\right)$ this implies that  
\begin{equation}\label{e:0bound}
\begin{split}
\1^\pm_E\left(\frac{\xi}{2}+z, \overline{u}\left(\frac{\xi}{2}\right)+z_d\right)-\1^\pm_E\left(\frac{-\xi}{2}+z, \underline{u}\left(\frac{-\xi}{2}\right)+z_d\right)=0
\\ \text{ for } z_d< \underline{u}\left(\frac{\xi}{2}+z\right)-\overline{u}\left(\frac{\xi}{2}\right) \text{ or } z_d\geq \overline{u}\left(\frac{-\xi}{2}+z\right)-\underline{u}\left(\frac{-\xi}{2}\right). 
\end{split}
\end{equation}
As $\underline{u}\left(\frac{\xi}{2}+z\right)-\overline{u}\left(\frac{\xi}{2}\right)\geq -\omega(|z|)$ and $\overline{u}\left(\frac{-\xi}{2}+z\right)-\underline{u}\left(\frac{-\xi}{2}\right)\leq \omega(|z|)$ by 3). in \eqref{e:omegabound}, combining \eqref{e:nonnegative} and \eqref{e:0bound} gives us 
\begin{equation}
\begin{split}
\int\limits_\R &\frac{\1^\pm_E\left(\frac{\xi}{2}+z, \overline{u}\left(\frac{\xi}{2}\right)+z_d\right)-\1^\pm_E\left(\frac{-\xi}{2}+z, \underline{u}\left(\frac{-\xi}{2}\right)+z_d\right)}{(|z|^2+z_d^2)^{(d+s)/2}}dz_d 
\\&= \int\limits_{\underline{u}\left(\frac{\xi}{2}+z\right)-\overline{u}\left(\frac{\xi}{2}\right)}^{\overline{u}\left(\frac{-\xi}{2}+z\right)-\underline{u}\left(\frac{-\xi}{2}\right)} \frac{\1^\pm_E\left(\frac{\xi}{2}+z, \overline{u}\left(\frac{\xi}{2}\right)+z_d\right)-\1^\pm_E\left(\frac{-\xi}{2}+z, \underline{u}\left(\frac{-\xi}{2}\right)+z_d\right)}{(|z|^2+z_d^2)^{(d+s)/2}}dz_d
\\&\leq\int\limits_{\underline{u}\left(\frac{\xi}{2}+z\right)-\overline{u}\left(\frac{\xi}{2}\right)}^{\overline{u}\left(\frac{-\xi}{2}+z\right)-\underline{u}\left(\frac{-\xi}{2}\right)} \frac{\1^\pm_E\left(\frac{\xi}{2}+z, \overline{u}\left(\frac{\xi}{2}\right)+z_d\right)-\1^\pm_E\left(\frac{-\xi}{2}+z, \underline{u}\left(\frac{-\xi}{2}\right)+z_d\right)}{(|z|^2+\omega(|z|)^2)^{(d+s)/2}}dz_d.
\end{split}
\end{equation}

\end{proof}

\begin{lemma} \label{l:P.V.estimate}
Let $E\subseteq \R^d$ be an open set with modulus $\omega$ satisfying \eqref{e:omegabound}.  Then
\begin{equation}\label{e:mainintest}
\begin{split}
P.&V. \int\limits_{\R^{d-1}}\int\limits_\R \frac{\1^\pm_E\left(\frac{\xi}{2}+z, \overline{u}\left(\frac{\xi}{2}\right)+z_d\right)-\1^\pm_E\left(\frac{-\xi}{2}+z, \underline{u}\left(\frac{-\xi}{2}\right)+z_d\right)}{(|z|^2+z_d^2)^{(d+s)/2}}dzdz_d
\\ &\leq\frac{-2}{(3(1+L))^{d+s}} \left(\int\limits_{\R^{d-1}} \frac{\omega(|\xi|)-\left(\overline{u}\left(\frac{\xi}{2}+z\right)-\underline{u}\left(\frac{-\xi}{2}+z\right)\right)}{(|z|^2+\omega(0)^2)^{(d+s)/2}}dz
+\int\limits_{\R^{d-1}}\frac{\overline{u}(z)-\underline{u}(z)}{\max\limits_{\pm}(|z\pm \frac{\xi}{2}|^2+\omega(0)^2)^{(d+s)/2}}dz\right).
\end{split}
\end{equation}
\end{lemma}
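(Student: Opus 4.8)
The plan is to integrate out the vertical variable with Lemma~\ref{l:fixedzestimate}, evaluate the resulting one-dimensional slice integral $\int_\mathbb{R}(\cdots)\,dz_d$ in closed form, and match the two pieces that appear.

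Write $F(z,z_d)$ for the numerator of the integrand in \eqref{e:mainintest}, and set $N(z):=\int_\mathbb{R} F(z,z_d)\,dz_d$, which is $\le 0$ by \eqref{e:nonnegative}. First I would use \eqref{e:0bound} (which says $F$ vanishes outside the $z_d$-interval appearing in Lemma~\ref{l:fixedzestimate}) so that the lemma reads, for every $z\ne 0$,
\[
\int_\mathbb{R}\frac{F(z,z_d)}{(|z|^2+z_d^2)^{(d+s)/2}}\,dz_d \;\le\; \frac{N(z)}{(|z|^2+\omega(|z|)^2)^{(d+s)/2}}.
\]
Since $\omega$ is nondecreasing with $\omega(0)>0$ and $\omega'\le 1+L$, we have $\omega(|z|)\le\omega(0)+(1+L)|z|$ and hence the routine estimate $|z|^2+\omega(|z|)^2\le 3(1+L)^2(|z|^2+\omega(0)^2)$; as $N(z)\le 0$ this lets us replace the denominator by $(3(1+L))^{d+s}(|z|^2+\omega(0)^2)^{(d+s)/2}$. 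Integrating in $z$ — the principal value in \eqref{e:preciseboundarydiff} being, in the smooth case, an absolutely convergent double integral near the origin because $\nabla\overline u(\tfrac{\xi}{2})=\nabla\underline u(-\tfrac{\xi}{2})$ forces $F$ to vanish to sufficient order there — reduces the proof to bounding $\int_{\mathbb{R}^{d-1}}N(z)\,(|z|^2+\omega(0)^2)^{-(d+s)/2}\,dz$.

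The core step is the closed-form evaluation of $N(z)$. Writing $\1^\pm_E=2\1_E-1$ (valid pointwise since $E$ is open), substituting $w=\overline u(\tfrac{\xi}{2})+z_d$ and $w=\underline u(-\tfrac{\xi}{2})+z_d$ in the two terms, integrating over $|z_d|\le M$ and sending $M\to\infty$, using that the vertical fiber of $E$ over any point contains $(-\infty,\underline u)$ and is contained in $(-\infty,\overline u)$, and invoking assumption 4 of \eqref{e:omegabound} to identify $\overline u(\tfrac{\xi}{2})-\underline u(-\tfrac{\xi}{2})=\omega(|\xi|)$, I expect to obtain
\[
N(z) = -2\Big(\omega(|\xi|)-\big(\overline u(\tfrac{\xi}{2}+z)-\underline u(-\tfrac{\xi}{2}+z)\big)\Big)-2\,\alpha(z)-2\,\beta(z),
\]
with $\alpha(z)=\big|\{w\in[\underline u(\tfrac{\xi}{2}+z),\overline u(\tfrac{\xi}{2}+z)]:(\tfrac{\xi}{2}+z,w)\notin E\}\big|$ and $\beta(z)=\big|\{w\in[\underline u(-\tfrac{\xi}{2}+z),\overline u(-\tfrac{\xi}{2}+z)]:(-\tfrac{\xi}{2}+z,w)\in E\}\big|$; all three summands are nonpositive, the first by assumption 3 of \eqref{e:omegabound}. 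Dividing by $(|z|^2+\omega(0)^2)^{(d+s)/2}$ and integrating, the first summand yields exactly (up to the factor $(3(1+L))^{-(d+s)}$) the first integral on the right of \eqref{e:mainintest}. For the other two I would substitute $\zeta=\tfrac{\xi}{2}+z$ in the $\alpha$-integral and $\zeta=-\tfrac{\xi}{2}+z$ in the $\beta$-integral, turning the denominators into $(|\zeta-\tfrac{\xi}{2}|^2+\omega(0)^2)^{(d+s)/2}$ and $(|\zeta+\tfrac{\xi}{2}|^2+\omega(0)^2)^{(d+s)/2}$, each at most $\max_\pm(|\zeta\pm\tfrac{\xi}{2}|^2+\omega(0)^2)^{(d+s)/2}$; enlarging both denominators to this common value only increases the (nonpositive) expression, while the two numerators combine fiberwise over $\zeta$ to $\overline u(\zeta)-\underline u(\zeta)$ because $E$ and its complement partition the segment $[\underline u(\zeta),\overline u(\zeta)]$. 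This reproduces the second integral on the right of \eqref{e:mainintest}, and collecting the three contributions completes the proof.

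The main obstacle is this exact computation of $N(z)$ and, especially, the observation that its excess beyond the ``graphical'' term $\omega(|\xi|)-(\overline u(\tfrac{\xi}{2}+z)-\underline u(-\tfrac{\xi}{2}+z))$ splits precisely into the one-sided fiber measures $\alpha(z)$ and $\beta(z)$, which under the two opposite shift substitutions recombine into the full oscillation $\overline u-\underline u$ at a common horizontal point weighted against the (slightly larger) $\max_\pm$ kernel. Getting the bookkeeping right — the subgraph inclusions, the defining suprema and infima of $\overline u,\underline u$, and the exact use of assumptions 3 and 4 of \eqref{e:omegabound} — is where the care lies; the invocation of Lemma~\ref{l:fixedzestimate} and the comparison of the two radial weights are routine by comparison.
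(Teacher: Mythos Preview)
Your proposal is correct and follows essentially the same strategy as the paper's proof: apply Lemma~\ref{l:fixedzestimate}, decompose the resulting fiber integral into a ``graphical'' piece $\omega(|\xi|)-(\overline u(\tfrac{\xi}{2}+z)-\underline u(-\tfrac{\xi}{2}+z))$ plus two fiber-measure remainders, shift $z\mapsto z\mp\tfrac{\xi}{2}$ in those remainders so they recombine into $\overline u-\underline u$ against the $\max_\pm$ kernel, and use the Lipschitz bound on $\omega$ to pass from $\omega(|z|)$ to $\omega(0)$ in the denominator at the cost of $(3(1+L))^{d+s}$. The only cosmetic differences are that the paper carries out the fiber decomposition by explicitly splitting the $z_d$-interval into three subintervals (the middle one where $F\equiv -2$ and two end pieces) rather than computing $N(z)$ in one stroke via $\1_E^\pm=2\1_E-1$, and it performs the kernel comparison \emph{after} the shift substitutions rather than before; both orderings lead to the same bound.
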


\begin{proof}
By Lemma \ref{l:fixedzestimate} it suffices to show that 
\begin{equation}
\begin{split}
 \int\limits_{\R^{d-1}}&\left(\int\limits_{\underline{u}\left(\frac{\xi}{2}+z\right)-\overline{u}\left(\frac{\xi}{2}\right)}^{\overline{u}\left(\frac{-\xi}{2}+z\right)-\underline{u}\left(\frac{-\xi}{2}\right)} \frac{\1^\pm_E\left(\frac{\xi}{2}+z, \overline{u}\left(\frac{\xi}{2}\right)+z_d\right)-\1^\pm_E\left(\frac{-\xi}{2}+z, \underline{u}\left(\frac{-\xi}{2}\right)+z_d\right)}{(|z|^2+\omega(|z|)^2)^{(d+s)/2}}dz_d \right)dz
\\ &\leq\frac{-2}{(3(1+L))^{d+s}} \left(\int\limits_{\R^{d-1}} \frac{\omega(|\xi|)-\left(\overline{u}\left(\frac{\xi}{2}+z\right)-\underline{u}\left(\frac{-\xi}{2}+z\right)\right)}{(|z|^2+\omega(0)^2)^{(d+s)/2}}dz
+\int\limits_{\R^{d-1}}\frac{\overline{u}(z)-\underline{u}(z)}{\max\limits_{\pm}(|z\pm \frac{\xi}{2}|^2+\omega(0)^2)^{(d+s)/2}}dz\right).
\end{split}
\end{equation}

Fix some $z\not = 0$.  Using the definitions of $\overline{u}, \underline{u}$, we can more precisely bound   
\begin{equation}\label{e:fixedz'}
\int\limits_{\underline{u}\left(\frac{\xi}{2}+z\right)-\overline{u}\left(\frac{\xi}{2}\right)}^{\overline{u}\left(\frac{-\xi}{2}+z\right)-\underline{u}\left(\frac{-\xi}{2}\right)} \frac{\1^\pm_E\left(\frac{\xi}{2}+z, \overline{u}\left(\frac{\xi}{2}\right)+z_d\right)-\1^\pm_E\left(\frac{-\xi}{2}+z, \underline{u}\left(\frac{-\xi}{2}\right)+z_d\right)}{(|z|^2+\omega(|z|)^2)^{(d+s)/2}}dz_d,
\end{equation}
using that 
\begin{equation}
\begin{split}
\overline{u}\left(\frac{\xi}{2}+z\right)&-\overline{u}\left(\frac{\xi}{2}\right)\leq z_d< \underline{u}\left(\frac{-\xi}{2}+z\right)-\underline{u}\left(\frac{-\xi}{2}\right) 
\\ & \Rightarrow \1^\pm_E\left(\frac{\xi}{2}+z, \overline{u}\left(\frac{\xi}{2}\right)+z_d\right)-\1^\pm_E\left(\frac{-\xi}{2}+z, \underline{u}\left(\frac{-\xi}{2}\right)+z_d\right) = -2.
\end{split}
\end{equation}

Thus we can bound \eqref{e:fixedz'} by 
\begin{equation}\label{e:secondbound}
\begin{split}
\int\limits_{\underline{u}\left(\frac{\xi}{2}+z\right)-\overline{u}\left(\frac{\xi}{2}\right)}^{\overline{u}\left(\frac{-\xi}{2}+z\right)-\underline{u}\left(\frac{-\xi}{2}\right)}  &\frac{\1^\pm_E\left(\frac{\xi}{2}+z, \overline{u}\left(\frac{\xi}{2}\right)+z_d\right)-\1^\pm_E\left(\frac{-\xi}{2}+z, \underline{u}\left(\frac{-\xi}{2}\right)+z_d\right)}{(|z|^2+\omega(|z|)^2)^{(d+s)/2}}dz_d 
\\ &\leq \int\limits_{\overline{u}\left(\frac{\xi}{2}+z\right)-\overline{u}\left(\frac{\xi}{2}\right)}^{\underline{u}\left(\frac{-\xi}{2}+z\right)-\underline{u}\left(\frac{-\xi}{2}\right)} \frac{-2}{(|z|^2+\omega(|z|)^2)^{(d+s)/2}}dz_d
\\ &\quad + \int\limits_{\underline{u}\left(\frac{\xi}{2}+z\right)- \overline{u}\left(\frac{\xi}{2}\right)}^{\overline{u}\left(\frac{\xi}{2}+z\right)-\overline{u}\left(\frac{\xi}{2}\right)}  \frac{\1^\pm_E\left(\frac{\xi}{2}+z, \overline{u}\left(\frac{\xi}{2}\right)+z_d\right)-\1^\pm_E\left(\frac{-\xi}{2}+z, \underline{u}\left(\frac{-\xi}{2}\right)+z_d\right)}{(|z|^2+\omega(|z|)^2)^{(d+s)/2}}dz_d  
\\ &\quad +  \int\limits_{\underline{u}\left(\frac{-\xi}{2}+z\right)-\underline{u}\left(\frac{-\xi}{2}\right)}^{\overline{u}\left(\frac{-\xi}{2}+z\right)-\underline{u}\left(\frac{-\xi}{2}\right)}  \frac{\1^\pm_E\left(\frac{\xi}{2}+z, \overline{u}\left(\frac{\xi}{2}\right)+z_d\right)-\1^\pm_E\left(\frac{-\xi}{2}+z, \underline{u}\left(\frac{-\xi}{2}\right)+z_d\right)}{(|z|^2+\omega(|z|)^2)^{(d+s)/2}}dz_d.
\end{split}
\end{equation}

Using 4). in \eqref{e:omegabound} we can rewrite the first integral on the right hand side of \eqref{e:secondbound} as 
\begin{equation}\label{e:modcontdiff}
\int\limits_{\overline{u}\left(\frac{\xi}{2}+z\right)-\overline{u}\left(\frac{\xi}{2}\right)}^{\underline{u}\left(\frac{-\xi}{2}+z\right)-\underline{u}\left(\frac{-\xi}{2}\right)}\frac{-2}{(|z|^2+\omega(|z|)^2)^{(d+s)/2}}dz_d
= (-2)\frac{\omega(|\xi|)-\left(\overline{u}\left(\frac{\xi}{2}+z\right)-\underline{u}\left(\frac{-\xi}{2}+z\right)\right)}{(|z|^2+\omega(|z|)^2)^{(d+s)/2}}.
\end{equation}

As for the other two integrals on the right hand side of \eqref{e:secondbound}, by translating our $z_d$ bounds and using that $\overline{u}\left(\frac{\xi}{2}\right)-\underline{u}\left(\frac{-\xi}{2}\right)=\omega(|\xi|)$ we get that
\begin{equation}\label{e:+xiint}
\begin{split}
\displaystyle\int\limits_{\underline{u}\left(\frac{\xi}{2}+z\right)- \overline{u}\left(\frac{\xi}{2}\right)}^{\overline{u}\left(\frac{\xi}{2}+z\right)-\overline{u}\left(\frac{\xi}{2}\right)} \1^\pm_E\left(\frac{\xi}{2}+z, \overline{u}\left(\frac{\xi}{2}\right)+z_d\right)&-\1^\pm_E\left(\frac{-\xi}{2}+z, \underline{u}\left(\frac{-\xi}{2}\right)+z_d\right)dz_d
\\& = \int\limits_{\underline{u}\left(\frac{\xi}{2}+z\right)}^{\overline{u}\left(\frac{\xi}{2}+z\right)}  \1^\pm_E\left(\frac{\xi}{2}+z, z_d\right)-\1^\pm_E\left(\frac{-\xi}{2}+z, z_d-\omega(|\xi|)\right)dz_d
\\&= \int\limits_{\underline{u}\left(\frac{\xi}{2}+z\right)}^{\overline{u}\left(\frac{\xi}{2}+z\right)}  \1^\pm_E\left(\frac{\xi}{2}+z, z_d\right)-1dz_d.
\end{split}
\end{equation}
To go from line 2 to line 3, note that $z_d<\overline{u}\left(\frac{\xi}{2}+z\right)\leq \underline{u}\left(\frac{-\xi}{2}+z\right)+\omega(|\xi|)$.  Hence, $\1^\pm_E\left(\frac{-\xi}{2}+z, z_d-\omega(|\xi|)\right)=1$.  

By a symmetric argument, 
\begin{equation}\label{e:-xiint}
\begin{split}
\displaystyle\int\limits_{\underline{u}\left(\frac{-\xi}{2}+z\right)- \overline{u}\left(\frac{-\xi}{2}\right)}^{\overline{u}\left(\frac{-\xi}{2}+z\right)-\overline{u}\left(\frac{-\xi}{2}\right)}  \1^\pm_E\left(\frac{\xi}{2}+z, \overline{u}\left(\frac{\xi}{2}\right)+z_d\right)&-\1^\pm_E\left(\frac{-\xi}{2}+z, \underline{u}\left(\frac{-\xi}{2}\right)+z_d\right)dz_d
\\&= \int\limits_{\underline{u}\left(\frac{-\xi}{2}+z\right)}^{\overline{u}\left(\frac{-\xi}{2}+z\right)}  -1 - \1^\pm_E\left(\frac{-\xi}{2}+z, z_d\right)dz_d.
\end{split}
\end{equation}

Shifting the value of $z$ in \eqref{e:+xiint} and \eqref{e:-xiint} by $\mp \xi/2$ and adding them together thus gives that 
\begin{equation}\label{e:upperlowerdiff}
\begin{split}
\int\limits_{\underline{u}(z)}^{\overline{u}(z)}&\frac{\1^\pm_E(z, z_d)-1}{(|z-\frac{\xi}{2}|^2+\omega(|z-\frac{\xi}{2}|)^2)^{(d+s)/2}} + \frac{-1-\1^\pm_E(z, z_d)}{(|z+\frac{\xi}{2}|^2+\omega(|z+\frac{\xi}{2}|)^2)^{(d+s)/2}} dz_d
\\ &\leq \int\limits_{\underline{u}(z)}^{\overline{u}(z)} \frac{-2}{\max\limits_{\pm}(|z\pm\frac{\xi}{2}|^2+\omega(|z\pm \frac{\xi}{2}|)^2)^{(d+s)/2}}
 = (-2)\frac{\overline{u}(z)-\underline{u}(z)}{\max\limits_{\pm}(|z\pm\frac{\xi}{2}|^2+\omega(|z\pm \frac{\xi}{2}|)^2)^{(d+s)/2}}
\end{split}
\end{equation}

Plugging in \eqref{e:upperlowerdiff} and \eqref{e:modcontdiff} into \eqref{e:secondbound} and integrating in $z$ thus gives us that 
\begin{equation}
\begin{split}
P.V. &\int\limits_{\R^{d-1}}\int\limits_\R \frac{\1^\pm_E\left(\frac{\xi}{2}+z, \overline{u}\left(\frac{\xi}{2}\right)+z_d\right)-\1^\pm_E\left(\frac{-\xi}{2}+z, \underline{u}\left(\frac{-\xi}{2}\right)+z_d\right)}{(|z|^2+z_d^2)^{(d+s)/2}}dzdz_d
\\ &\leq -2\int\limits_{\R^{d-1}} \frac{\omega(|\xi|)-\left(\overline{u}\left(\frac{\xi}{2}+z\right)-\underline{u}\left(\frac{-\xi}{2}+z\right)\right)}{(|z|^2+\omega(|z|)^2)^{(d+s)/2}}dz
 -2\int\limits_{\R^{d-1}}\frac{\overline{u}(z)-\underline{u}(z)}{\max\limits_{\pm}(|z\pm\frac{\xi}{2}|^2+\omega(|z\pm\frac{\xi}{2}|)^2)^{(d+s)/2}}dz
\end{split}
\end{equation}
Finally, the proof is complete using that $\omega$ is $(1+L)$-Lipschtiz by assumption 3). in \eqref{e:omegabound}, so 
\begin{equation}
\begin{split}
\frac{1}{(|z|^2+\omega(|z|)^2)^{(d+s)/2}}&\geq \frac{1}{(|z|^2+((1+L)|z|+\omega(0))^2)^{(d+s)/2}}\geq  \frac{1}{((2(1+L)^2+1)|z|^2+2\omega(0)^2)^{(d+s)/2}}
\\& \geq \frac{1}{(3(1+L))^{d+s}} \frac{1}{(|z|^2+\omega(0)^2)^{(d+s)/2}}.
\end{split}
\end{equation}
\end{proof}

Combining \eqref{e:preciseboundarydiff} with Lemma \ref{l:P.V.estimate} thus gives us that 

\begin{equation}\label{e:timediffestimate}
\begin{split}
\partial_t \overline{u}\left(\frac{\xi}{2}\right)&-\partial_t \underline{u}\left(\frac{-\xi}{2}\right)
\\&\lesssim_{d,s,L}-\int\limits_{\R^{d-1}} \frac{\omega(|\xi|)-\left(\overline{u}\left(\frac{\xi}{2}+z\right)-\underline{u}\left(\frac{-\xi}{2}+z\right)\right)}{(|z|^2+\omega(0)^2)^{(d+s)/2}}dz
-\int\limits_{\R^{d-1}}\frac{\overline{u}(z)-\underline{u}(z)}{\max\limits_{\pm}(|z\pm \frac{\xi}{2}|^2+\omega(0)^2)^{(d+s)/2}}dz.
\end{split}
\end{equation}

In order to remove the dependence on $\overline{u},\underline{u}$ from \eqref{e:timediffestimate} and get an upper bound depending only on $\omega$, we now alter a useful integral rearrangement argument of \cite{KiselevIntRearrange} to get

\begin{lemma}\label{l:intrearrange}
Let $E$ be an open set with modulus $\omega$ satisfying assumptions \eqref{e:omegabound}.  Then 
\begin{equation}
\begin{split}
\int\limits_{\R^{d-1}} &\frac{\omega(|\xi|)-\left(\overline{u}\left(\frac{\xi}{2}+z\right)-\underline{u}\left(\frac{-\xi}{2}+z\right)\right)}{(|z|^2+\omega(0)^2)^{(d+s)/2}}dz
+\int\limits_{\R^{d-1}}\frac{\overline{u}(z)-\underline{u}(z)}{\max\limits_{\pm}(|z\pm \frac{\xi}{2}|^2+\omega(0)^2)^{(d+s)/2}}dz
\\ &\geq C(d,s)\left( \int\limits_0^{\frac{|\xi|}{2}} \frac{2\omega(|\xi|)-\omega(|\xi|+2\eta)-\omega(|\xi|-2\eta)}{(\eta^2+\omega(0)^2)^{(2+s)/2}}d\eta + \int\limits_{\frac{|\xi|}{2}}^\infty \frac{2\omega(|\xi|)+\omega(2\eta -|\xi|)-\omega(2\eta+|\xi|)}{(\eta^2+\omega(0)^2)^{(2+s)/2}}d\eta\right).
\end{split}
\end{equation}
where $C(d,s) =\displaystyle\frac{\Gamma\left(\frac{d-2}{2}\right)\Gamma\left(\frac{2+s}{2}\right)}{2\Gamma\left(\frac{d+s}{2}\right)}$ for $d\geq 3$, and $C(2,s)=1$. 
\end{lemma}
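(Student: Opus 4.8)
The plan is to bring the two integrals on the left onto a common kernel, reduce their sum to a quantity built only from $\omega$, and then collapse the resulting $(d-1)$-dimensional integral to the stated one-dimensional expressions. Write $\ell=|\xi|$ throughout.

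First I would rewrite the second integral so that it carries the same kernel as the first. Split $\R^{d-1}$ according to the sign of $\xi\cdot z$: on $\{\xi\cdot z\ge0\}$ one has $\max_\pm|z\pm\frac{\xi}{2}|^2=|z+\frac{\xi}{2}|^2$, and on $\{\xi\cdot z\le0\}$ it equals $|z-\frac{\xi}{2}|^2$; translating $z\mapsto z-\frac{\xi}{2}$ on the first piece and $z\mapsto z+\frac{\xi}{2}$ on the second turns the kernel into $(|z|^2+\omega(0)^2)^{-(d+s)/2}$, at the cost of replacing $\R^{d-1}$ by the half-spaces $\{\xi\cdot z\ge\ell^2/2\}$ and $\{\xi\cdot z\le-\ell^2/2\}$. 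This yields
$$\int_{\R^{d-1}}\frac{N(z)}{(|z|^2+\omega(0)^2)^{(d+s)/2}}\,dz$$
for the whole left-hand side, where $N(z)=\omega(\ell)-\overline u(\tfrac{\xi}{2}+z)+\underline u(-\tfrac{\xi}{2}+z)+\big(\overline u(z-\tfrac{\xi}{2})-\underline u(z-\tfrac{\xi}{2})\big)\mathbf 1_{\{\xi\cdot z\ge\ell^2/2\}}+\big(\overline u(z+\tfrac{\xi}{2})-\underline u(z+\tfrac{\xi}{2})\big)\mathbf 1_{\{\xi\cdot z\le-\ell^2/2\}}$. All three integrals converge absolutely: $0\le N(z)\le 2\omega(\ell)$ by assumption 3) of \eqref{e:omegabound} and \eqref{e:nonnegative}, and the kernels are $L^1$ since $\omega(0)>0$.

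Second, bound $N(z)$ from below using only $\omega$. The ingredients are assumption 3) of \eqref{e:omegabound} (which, together with $\overline u\ge\underline u$, also gives $|\overline u(x)-\overline u(y)|,|\underline u(x)-\underline u(y)|\le\omega(|x-y|)$) and the saturation 4), $\overline u(\tfrac{\xi}{2})-\underline u(-\tfrac{\xi}{2})=\omega(\ell)$. Symmetrizing in $z\mapsto -z$ (legitimate since the kernel is even) and using the modulus bound between the two symmetric evaluation points one gets $N(z)+N(-z)\ge 2\big(2\omega(\ell)-\omega(|z+\xi|)-\omega(|z-\xi|)\big)$, with the $\mathbf 1_{\{\pm\xi\cdot z\ge\ell^2/2\}}$-terms supplying the extra positivity precisely on $\{|\xi\cdot z|\ge\ell^2/2\}$ that upgrades the bound there; combined with $N\ge0$ this gives, region by region, a lower bound for $N(z)+N(-z)$ in terms of $\omega$ evaluated at $|z\pm\xi|$ (and, on the far region, of $\omega$ at $2|z\mp\tfrac\xi2|$ with a favorable sign). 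One keeps, in each region, whichever anchoring of the far values produces the expressions that will survive the perpendicular integration below.

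Third, collapse to one dimension. Write $z=(z_1,z')\in\R\times\R^{d-2}$ with $\xi=\ell e_1$; all the data depends on $z$ only through $z_1$ and $|z'|$, so passing to polar coordinates in $z'$ makes the $S^{d-3}$-integration free and the radial integral $\int_0^\infty r^{d-3}(\rho^2+r^2+\omega(0)^2)^{-(d+s)/2}\,dr$ produces exactly $C(d,s)=\tfrac12 B\!\big(\tfrac{d-2}{2},\tfrac{2+s}{2}\big)=\frac{\Gamma((d-2)/2)\Gamma((2+s)/2)}{2\Gamma((d+s)/2)}$ (the case $d=2$, where $z'$ is absent, degenerating to $C(2,s)=1$). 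Identifying the remaining axial parameter with $\eta$ and splitting according to whether $z$ lies "between" $\pm\tfrac{\xi}{2}$ ($\eta<\ell/2$, giving $2\omega(\ell)-\omega(\ell+2\eta)-\omega(\ell-2\eta)$) or "beyond" them ($\eta>\ell/2$, where the $\mathbf 1$-terms are active and give $2\omega(\ell)+\omega(2\eta-\ell)-\omega(2\eta+\ell)$) finishes the estimate; this is the integral rearrangement of \cite{KiselevIntRearrange} adapted to the kernel $(|z|^2+\omega(0)^2)^{-(d+s)/2}$.

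The main obstacle is exactly the interplay of the second and third steps: the crude bound $N(z)+N(-z)\ge 2(2\omega(\ell)-\omega(|z+\xi|)-\omega(|z-\xi|))$ alone is not enough on the region $\eta>\ell/2$ (there the $\mathbf 1$-terms coming from the second integral must be exploited to replace $-\omega(2\eta-\ell)$ by $+\omega(2\eta-\ell)$), and after integrating out the $d-2$ perpendicular variables one must land exactly on $\omega$ of the $1$-dimensional distances $\ell\pm2\eta$, $2\eta\mp\ell$ rather than on $\omega$ of the full Euclidean norms $\sqrt{(z_1\pm\ell)^2+|z'|^2}$. Controlling the latter in the right direction is where the monotonicity and concavity of $\omega$ from assumption 2) of \eqref{e:omegabound} are essential, and getting the sharp constant $C(d,s)$ out of this is the delicate bookkeeping.
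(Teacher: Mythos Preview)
Your approach has a genuine gap in the interplay of steps 2 and 3. After your full symmetrization $z\mapsto -z$, the pairings $\overline u(\tfrac{\xi}{2}+z)-\underline u(-\tfrac{\xi}{2}-z)$ force $\omega$ to be evaluated at the full Euclidean distance $|\xi+2z|=\sqrt{(\ell+2z_1)^2+4|z'|^2}$, which still depends on the transverse variable $z'$. You then say that ``monotonicity and concavity of $\omega$ from assumption 2) of \eqref{e:omegabound}'' let you strip out the $|z'|$. But \eqref{e:omegabound} does \emph{not} assume concavity---only $0\le\omega'\le 1+L$ and $\omega'(0)=0$---and the explicit modulus built in Section~5 is in fact convex near $0$. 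Monotonicity alone goes the wrong way: $|\xi+2z|\ge|\ell+2z_1|$ gives $\omega(|\xi+2z|)\ge\omega(|\ell+2z_1|)$, which \emph{weakens} the lower bound you want on $2\omega(\ell)-\omega(|\xi+2z|)-\omega(|\xi-2z|)$. So the collapse to one dimension does not go through as written, and you cannot recover the sharp $C(d,s)$ this way.

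The paper sidesteps this entirely by reflecting only in the $\xi$-direction: writing $z=(\eta,\nu)$ with $\xi=(\ell,0)$, it uses evenness of the kernel in $\eta$ alone to replace $\underline u(-\tfrac{\ell}{2}+\eta,\nu)$ by $\underline u(-\tfrac{\ell}{2}-\eta,\nu)$. The two evaluation points $(\tfrac{\ell}{2}+\eta,\nu)$ and $(-\tfrac{\ell}{2}-\eta,\nu)$ then share the same transverse coordinate $\nu$, so their distance is the purely one-dimensional quantity $|\ell+2\eta|$ and the modulus bound immediately gives $\omega(|\ell+2\eta|)$ with no $\nu$-dependence. The $\nu$-integration then acts on the kernel alone and produces $C(d,s)$ via the Beta integral. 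A second feature of the paper's argument that your translation misses: after this reflection and a rearrangement over $\eta\in[-\tfrac{\ell}{2},\infty)$, the cross term $-\int(\overline u-\underline u)\min_\pm K(\cdot\pm\tfrac{\xi}{2})$ appears \emph{as an exact identity}, so the second integral on the left-hand side cancels rather than having to be bounded separately. That is what makes the ``far region'' produce $+\omega(2\eta-\ell)$ rather than $-\omega(2\eta-\ell)$ without any additional estimate.
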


\begin{proof}
We prove Lemma \ref{l:intrearrange} in the case that $d\geq 3$.  The case when $d=2$ follows from a clear simplification.  

We write $z\in \R^{d-1}$ as $z = (\eta, \nu)\in \R\times \R^{d-2}$, and without loss of generality assume that $\xi = (|\xi|,0)$.

To begin, let 
\begin{equation}
K(\eta, \nu) = \frac{1}{(\eta^2+|\nu|^2+\omega(0)^2)^{(d+s)/2}}.
\end{equation}

As $K(\eta, \nu)$ is radial, we have that 
\begin{equation}
\begin{split}
\int\limits_{\R^{d-2}}\int\limits_{\R} &\left[ \omega(|\xi|)- \overline{u}\left(\frac{|\xi|}{2}+\eta,\nu\right)+\underline{u}\left(-\frac{|\xi|}{2}+\eta,\nu\right)\right] K(\eta,\nu)d\eta d\nu
\\ &= \int\limits_{\R^{d-2}}\int\limits_{\R}\left[ \omega(|\xi|)- \overline{u}\left(\frac{|\xi|}{2}+\eta,\nu\right)+\underline{u}\left(-\frac{|\xi|}{2}-\eta,\nu\right)\right] K(\eta,\nu)d\eta d\nu.
\end{split}
\end{equation}

Fix some $\nu\in \R^{d-1}$.  Then breaking up the $\eta$ integral, rearranging, and adding terms gives
\begin{equation}
\begin{split}
\int\limits_{\R}& \left[ \omega(|\xi|)- \overline{u}\left(\frac{|\xi|}{2}+\eta,\nu\right)+\underline{u}\left(-\frac{|\xi|}{2}-\eta,\nu\right)\right] K(\eta,\nu)d\eta 
\\& =\int\limits_{-\frac{|\xi|}{2}}^\infty \left[ \omega(|\xi|)- \overline{u}\left(\frac{|\xi|}{2}+\eta,\nu\right)+\underline{u}\left(-\frac{|\xi|}{2}-\eta,\nu\right)\right] K(\eta,\nu)d\eta 
\\& \quad +\int\limits_{-\frac{|\xi|}{2}}^\infty \left[ \omega(|\xi|)- \overline{u}\left(-\frac{|\xi|}{2}-\eta,\nu\right)+\underline{u}\left(\frac{|\xi|}{2}+\eta,\nu\right)\right] K(-|\xi|-\eta,\nu)d\eta 
\\&= \int\limits_{-\frac{|\xi|}{2}}^\infty  \omega(|\xi|)(K(\eta, \nu)+K(|\xi|+\eta,\nu)) 
\\& \qquad \qquad \qquad- \left[\overline{u}\left(\frac{|\xi|}{2}+\eta,\nu\right) -\underline{u}\left(-\frac{|\xi|}{2}-\eta,\nu\right)\right] (K(\eta,\nu)-K(|\xi|+\eta,\nu))d\eta 
\\& \quad - \int\limits_{-\frac{|\xi|}{2}}^\infty \left[\overline{u}\left(\frac{|\xi|}{2}+\eta,\nu\right)- \underline{u}\left(\frac{|\xi|}{2}+\eta,\nu\right) + \overline{u}\left(-\frac{|\xi|}{2}-\eta,\nu\right)- \underline{u}\left(\frac{-|\xi|}{2}-\eta,\nu\right)\right] K(|\xi|+\eta,\nu)d\eta 
\\& =\int\limits_{-\frac{|\xi|}{2}}^\infty  \omega(|\xi|)(K(\eta, \nu)+K(|\xi|+\eta,\nu)) 
\\& \qquad \qquad \qquad- \left[\overline{u}\left(\frac{|\xi|}{2}+\eta,\nu\right) -\underline{u}\left(-\frac{|\xi|}{2}-\eta,\nu\right)\right] (K(\eta,\nu)-K(|\xi|+\eta,\nu))d\eta 
\\& \quad - \int\limits_{\R} \left[\overline{u}(\eta, \nu)-\underline{u}(\eta, \nu)\right]\min\{K\left(\eta\pm \frac{|\xi|}{2},\nu\right)\}d\eta .
\end{split}
\end{equation}
The last integral is the only real difference between the argument we give and the one \cite{KiselevIntRearrange} gives, and it arises naturally out of the fact that $\partial E$ is not the graph of a function.  However, noting that 
\begin{equation}
\min\limits_{\pm} K\left(z\pm \frac{\xi}{2}\right) = \frac{1}{\max\limits_{\pm}(|z\pm \frac{\xi}{2}|^2+\omega(0)^2)^{(d+s)/2}},
\end{equation}
we see this term is precisely 
\begin{equation}
 \int\limits_{\R^{d-2}}\int\limits_{\R} \left[\overline{u}(\eta, \nu)-\underline{u}(\eta, \nu)\right]\min\{K\left(\eta\pm \frac{|\xi|}{2},\nu\right)\}d\eta d\nu
=\int\limits_{\R^{d-1}}\frac{\overline{u}(z)-\underline{u}(z)}{\max\limits_{\pm}(|z\pm \frac{\xi}{2}|^2+\omega(0)^2)^{(d+s)/2}}dz 
\end{equation}

Integrating in $\nu$, taking into account this cancelation, adding/subtracting $\omega(|\xi|+2\eta)$, and using that $K$ is nonincreasing, we get that 
\begin{equation}
\begin{split}
\int\limits_{\R^{d-2}}\int\limits_{-\frac{|\xi|}{2}}^\infty &\left[ \omega(|\xi|)- \overline{u}\left(\frac{|\xi|}{2}+\eta,\nu\right)+\underline{u}\left(-\frac{|\xi|}{2}+\eta,\nu\right)\right] K(\eta,\nu)d\eta d\nu 
\\& +\int\limits_{\R^{d-2}} \int\limits_{\R} \left[\overline{u}(\eta, \nu)-\underline{u}(\eta, \nu)\right]\min\{K\left(\eta\pm \frac{|\xi|}{2},\nu\right)\}d\eta d\nu.
\\&= \int\limits_{\R^{d-2}}\int\limits_{-\frac{|\xi|}{2}}^\infty  \omega(|\xi|)(K(\eta, \nu)+K(|\xi|+\eta,\nu))  - \omega(|\xi|+2\eta)(K(\eta, \nu) - K(|\xi|+\eta, \nu))d\eta d\nu
\\& \quad+\int\limits_{\R^{d-2}}\int\limits_{-\frac{|\xi|}{2}}^\infty  \left[\omega(|\xi|+2\eta) - \overline{u}\left(\frac{|\xi|}{2}+\eta,\nu\right) +\underline{u}\left(-\frac{|\xi|}{2}-\eta,\nu\right)\right] (K(\eta,\nu)-K(|\xi|+\eta,\nu))d\eta d\nu
\\& \geq  \int\limits_{\R^{d-2}}\int\limits_{-\frac{|\xi|}{2}}^\infty  \omega(|\xi|)[K(\eta, \nu)+K(|\xi|+\eta,\nu)]  - \omega(|\xi|+2\eta)[K(\eta, \nu) - K(|\xi|+\eta, \nu)]d\eta d\nu
\end{split}
\end{equation}

It then follows identically to the argument in the appendix of \cite{KiselevIntRearrange} that 
\begin{equation}
\begin{split}
\int\limits_{\R^{d-2}}&\int\limits_{-\frac{|\xi|}{2}}^\infty  \omega(|\xi|)[K(\eta, \nu)+K(|\xi|+\eta,\nu)]  - \omega(|\xi|+2\eta)[K(\eta, \nu) - K(|\xi|+\eta, \nu)]d\eta d\nu
\\ &= \int\limits_0^{\frac{|\xi|}{2}} \left(2\omega(|\xi|) - \omega(|\xi|+2\eta)-\omega(|\xi|-2\eta)\right)\tilde{K}(\eta)d\eta 
 + \int\limits_{\frac{|\xi|}{2}}^\infty \left(2\omega(|\xi|)+ \omega(2\eta-|\xi|) - \omega(2\eta+|\xi|)\right)\tilde{K}(\eta)d\eta,
\end{split}
\end{equation}
where 
\begin{equation}
\tilde{K}(\eta) = \int\limits_{\R^{d-2}} K(\eta, \nu)d\nu = \frac{1}{(\eta^2+\omega(0)^2)^{(2+s)/2}}\left(\int\limits_0^\infty \frac{r^{d-3}}{(1+r^2)^{(d+s)/2}}dr\right).
\end{equation}
Note that 
\begin{equation}
\int\limits_0^\infty \frac{r^{d-3}}{(1+r^2)^{(d+s)/2}}dr = \frac{1}{2}B\left(\frac{d-2}{2}, \frac{2+s}{2}\right) = \frac{\Gamma\left(\frac{d-2}{2}\right)\Gamma\left(\frac{2+s}{2}\right)}{2\Gamma\left(\frac{d+s}{2}\right)},
\end{equation}
where $B(x,y)$ is the Beta function.  

Thus 
\begin{equation}
\begin{split}
\int\limits_{\R^{d-1}} &\frac{\omega(|\xi|)-\left(\overline{u}\left(\frac{\xi}{2}+z\right)-\underline{u}\left(\frac{-\xi}{2}+z\right)\right)}{(|z|^2+\omega(0)^2)^{(d+s)/2}}dz
+\int\limits_{\R^{d-1}}\frac{\overline{u}(z)-\underline{u}(z)}{\max\limits_{\pm}(|z\pm \frac{\xi}{2}|^2+\omega(0)^2)^{(d+s)/2}}dz
\\ &\geq C(d,s)\left( \int\limits_0^{\frac{|\xi|}{2}} \frac{2\omega(|\xi|)-\omega(|\xi|+2\eta)-\omega(|\xi|-2\eta)}{(\eta^2+\omega(0)^2)^{(2+s)/2}}d\eta 
+ \int\limits_{\frac{|\xi|}{2}}^\infty \frac{2\omega(|\xi|)+\omega(2\eta -|\xi|)-\omega(2\eta+|\xi|)}{(\eta^2+\omega(0)^2)^{(2+s)/2}}d\eta\right).
\end{split}
\end{equation}

\end{proof}

%%%%%%%%%%%%%%%%%%%%%%%%%%%%%%%%%%%%%%%%%%%%%%%%%%%%%%%%%%%%%%%%%%%%%%%Section 5                     																							%%%%%%%%%%%%%%%%%%%%%%%%%%%%%%%%%%%%%%%%%%%%%%%%%%%%%%%%%%%%%%%%%%%%%%

\section{Construction of modulus and completion of break through argument}

Combining \eqref{e:preciseboundarydiff} with Lemmas \ref{l:P.V.estimate} and \ref{l:intrearrange}, we have under the assumptions of \eqref{e:omegabound} that 
\begin{equation}\label{e:timeomegabound}
\begin{split}
\partial_t \overline{u}\left(t_0,\frac{\xi}{2}\right)-\partial_t \underline{u}\left(t_0,\frac{-\xi}{2}\right) \lesssim_{d,s,L} &\int\limits_0^{\frac{|\xi|}{2}} \frac{\omega(|\xi|+2\eta)+\omega(|\xi|-2\eta)-2\omega(|\xi|)}{(\eta^2+\omega(0)^2)^{(2+s)/2}}d\eta 
\\&+ \int\limits_{\frac{|\xi|}{2}}^\infty \frac{\omega(2\eta+|\xi|)-\omega(2\eta -|\xi|)-2\omega(|\xi|)}{(\eta^2+\omega(0)^2)^{(2+s)/2}}d\eta.
\end{split}
\end{equation}

As the right hand side of \eqref{e:timeomegabound} only depends on $\omega$, we can now make our choice of a family of moduli of continuity $\omega:[0,\infty)\times[0,\infty)\to [0,\infty)$ to complete the breakthrough argument.  With that in mind, define
\begin{equation}\label{e:omegadefn}
\omega(t,r) = \delta(t)+\left\{\begin{array}{ll} 1+Lr, & r\geq 2 \\(1+L) r - \frac{r^{1+s}}{2^{1+s}},  & c\delta(t)^2\leq r \leq 2, \\ A(\delta)r^2+B(\delta), & 0\leq r\leq c\delta(t)^2 \end{array}\right.
\end{equation}
where $0< c<<1$, and 
\begin{equation}
\begin{split}
A(\delta) &= \frac{1+L}{2c\delta^2} - \frac{1+s}{2^{2+s}} (c\delta^2)^{s-1}, 
\\ B(\delta) &=  \frac{(1+L)c\delta^2}{2} - \frac{1-s}{2^{2+s}}(c\delta^2)^{1+s}.
\end{split}
\end{equation}
 are chosen so that $\omega(t, \cdot)$ is $C^1$, and $\delta:[ 0,T]\to [0,1]$ is a to be determined non increasing function with $\delta(0)= 1$ and $\delta(T) = 0$.  The function $\delta(t)\approx \omega(t,0)$ and essentially represents how far the boundary of our set $E_t$ is from being a graph.

\begin{lemma}\label{l:omegatimederivbound}
For $\omega(t,r)$ as defined in \eqref{e:omegadefn}, $\partial_t \omega(t,r)> 2\delta'(t)$ for $0< c< \frac{2}{5(1+L)}$.  
\end{lemma}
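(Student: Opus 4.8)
\emph{Approach.} The plan is to compute $\partial_t\omega(t,r)$ separately on the three pieces of the definition \eqref{e:omegadefn} and to show in each case that $\partial_t\omega(t,r)=\delta'(t)\,\kappa(t,r)$ for some factor $\kappa(t,r)<2$. Since $\delta$ is non-increasing, $\delta'(t)\le 0$, so
\[
\partial_t\omega(t,r)-2\delta'(t)=\delta'(t)\bigl(\kappa(t,r)-2\bigr)\ \ge\ 0 ,
\]
with strict inequality at every $t$ where $\delta'(t)<0$ — which is all that the breakthrough argument of Section 3 needs (and indeed the $\delta$ constructed later is strictly decreasing on $[0,T)$). On the two outer pieces, $r\ge 2$ and $c\delta(t)^2\le r\le 2$, the $t$-dependence of $\omega(t,r)$ enters only through the additive term $\delta(t)$ — the expressions $1+Lr$ and $(1+L)r-r^{1+s}/2^{1+s}$ carry no $\delta$ — so $\partial_t\omega(t,r)=\delta'(t)$ and $\kappa\equiv 1$; these cases are immediate.

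\emph{The inner piece.} The content is the region $0\le r\le c\delta(t)^2$, where $\omega(t,r)=\delta(t)+A(\delta(t))\,r^2+B(\delta(t))$ and hence $\partial_t\omega(t,r)=\delta'(t)\bigl(1+A'(\delta(t))\,r^2+B'(\delta(t))\bigr)$. So it suffices to prove
\[
A'(\delta)\,r^2+B'(\delta)<1\qquad\text{for all }0\le r\le c\delta^2,\ \delta\in(0,1] .
\]
Two observations reduce this to a one-line estimate. First, differentiating the two-term formulas for $A(\delta)$ and $B(\delta)$ and collecting powers of $c,\delta,s$ gives the identity $A'(\delta)\,(c\delta^2)^2+B'(\delta)=0$; since $r\mapsto A'(\delta)r^2+B'(\delta)$ is affine in $r^2$, on $[0,c\delta^2]$ it lies between its two endpoint values $B'(\delta)$ and $0$, so $A'(\delta)r^2+B'(\delta)\le\max\{0,B'(\delta)\}$. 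Second, from $B'(\delta)=(1+L)c\delta-\frac{1-s^2}{2^{1+s}}c^{1+s}\delta^{1+2s}$, dropping the nonnegative subtracted term and using $\delta\le 1$ yields $B'(\delta)\le(1+L)c$, which is $<\frac{2}{5}<1$ precisely under the hypothesis $c<\frac{2}{5(1+L)}$. Combining the two, $A'(\delta)r^2+B'(\delta)<\frac{2}{5}<1$, i.e.\ $\kappa=1+A'(\delta)r^2+B'(\delta)<\frac{7}{5}<2$, which finishes this piece.

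\emph{Main obstacle.} There is no conceptual difficulty; the one place demanding care is the bookkeeping behind the identity $A'(\delta)(c\delta^2)^2+B'(\delta)=0$ — one must differentiate each of the two terms in $A$ and $B$, track the resulting powers of $c$, $\delta$, $s$, and verify the cancellation. If one prefers to avoid that identity, one can instead check directly that $A'(\delta)<0$ for every $\delta\in(0,1]$ (because $c^s\delta^{2s}\le 1$ and $\frac{1-s^2}{2^{1+s}}<\frac12\le 1+L$), whence $A'(\delta)r^2+B'(\delta)\le B'(\delta)$ on $[0,c\delta^2]$, after which the bound $B'(\delta)\le(1+L)c<\frac{2}{5}$ closes the argument in exactly the same way.
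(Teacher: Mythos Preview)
Your proof is correct and follows the same reduction as the paper: both arguments observe that on the two outer pieces $\partial_t\omega=\delta'(t)$, and on the inner piece $\partial_t\omega=\delta'(t)\bigl(1+A'(\delta)r^2+B'(\delta)\bigr)$, so the task is to show $A'(\delta)r^2+B'(\delta)<1$ for $0\le r\le c\delta^2$.

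Where you differ is in how you establish this last inequality. The paper bounds the two terms separately, showing $B'(\delta)<\tfrac{2}{5}$ and $A'(\delta)r^2<\tfrac{3}{5}$ (the latter by keeping only the positive part of $A'(\delta)$ and using $r\le c\delta^2$). You instead exploit the exact identity $A'(\delta)(c\delta^2)^2+B'(\delta)=0$, which is a direct consequence of the $C^1$-matching that defines $A,B$; since the expression is affine in $r^2$, it is squeezed between $0$ and $B'(\delta)$ on $[0,c\delta^2]$, and the single bound $B'(\delta)\le(1+L)c<\tfrac{2}{5}$ finishes. Your route is a touch cleaner and gives the sharper bound $A'(\delta)r^2+B'(\delta)<\tfrac{2}{5}$; it also explains, for free, why $\partial_t\omega$ is continuous across $r=c\delta(t)^2$. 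The paper's term-by-term bound is more pedestrian but avoids verifying the cancellation. Both are fine.
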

\begin{proof}
Examining the formula in \eqref{e:omegadefn} and the fact that $\delta'(t)\leq 0$, its clear that it suffices to show
\begin{equation}
A'(\delta(t))r^2 + B'(\delta(t))< 1, \qquad 0\leq r\leq c\delta(t)^2, \ 0\leq c\leq\frac{2}{5(1+L)}.
\end{equation}

Differentiating $B$ and using that $0\leq \delta\leq 1$, we have that 
\begin{equation}
B'(\delta)  =(1+L)c\delta -\frac{(1-s)(1+s)}{2^{1+s}}c^{1+s}\delta^{1+2s} \leq (1+L)c< \frac{2}{5}.
\end{equation}
Similarly, as 
\begin{equation}
A'(\delta) = \frac{-3(1+L)}{2c\delta^3} + \frac{(1+s)(3-2s)}{2^{2+s}c^{1-s}\delta^{3-2s}}
\end{equation}
and $0\leq r\leq c\delta^2$, we have that 
\begin{equation}
A'(\delta)r^2 \leq \frac{(1+s)(3-2s)}{2^{2+s}c^{1-s}\delta^{3-2s}}r^2\leq \frac{3}{2}c^{1+s}\delta^{1+2s}\leq \frac{3}{2}c< \frac{3}{5}.
\end{equation}
\end{proof}

Thus $\partial_t \omega(t,\cdot)$ will always be comparable to $\delta'(t)$.  Our goal now is to bound 
\begin{equation}\label{e:omegaintegrals}
 \int\limits_0^{\frac{|\xi|}{2}} \frac{\omega(t,|\xi|+2\eta)+\omega(t,|\xi|-2\eta)-2\omega(t,|\xi|)}{(\eta^2+\omega(t,0)^2)^{(2+s)/2}}d\eta + \int\limits_{\frac{|\xi|}{2}}^\infty \frac{\omega(t,2\eta+|\xi|)-\omega(t,2\eta -|\xi|)-2\omega(t,|\xi|)}{(\eta^2+\omega(t,0)^2)^{(2+s)/2}}d\eta,
\end{equation}
in terms of $\delta(t)\approx \omega(t,0)$.  

If $\omega(t,\cdot)$ was a concave function, then both of the integrals in \eqref{e:omegaintegrals} would be nonpositive.  However because we needed $\partial_r \omega(t,0) = 0$ in our construction of $\omega$ in case the touching point $\xi = 0$, $\omega(t,\cdot)$ will be convex near 0.  Thus the first integral of \eqref{e:omegaintegrals} can be positive.  However, we will show that as long as $c$ is taken small, it will be under control.

\begin{lemma}\label{l:omegaposintbound}
Let $\omega(t,r)$ be as defined in \eqref{e:omegadefn} and $|\xi|\leq 2$.  Then 
\begin{equation}\label{e:omegaposint}
 \int\limits_0^{\frac{|\xi|}{2}} \frac{\omega(t,|\xi|+2\eta)+\omega(t,|\xi|-2\eta)-2\omega(t,|\xi|)}{(\eta^2+\omega(t,0)^2)^{(2+s)/2}}d\eta\leq(1+L)c^2.
\end{equation}
\end{lemma}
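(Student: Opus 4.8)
The plan is to exploit the fact that the integrand's numerator is a second difference of $\omega(t,\cdot)$, which vanishes to high order and is controlled by the size of the convex ``bump'' region $[0,c\delta^2]$. First I would split the integral $\int_0^{|\xi|/2}$ according to where the points $|\xi|-2\eta$, $|\xi|$, $|\xi|+2\eta$ fall relative to the breakpoints $c\delta^2$ and $2$ of the piecewise definition \eqref{e:omegadefn}. Since $|\xi|\le 2$, the only source of positivity in the second difference $\omega(t,|\xi|+2\eta)+\omega(t,|\xi|-2\eta)-2\omega(t,|\xi|)$ is the convex piece $A(\delta)r^2+B(\delta)$ on $[0,c\delta^2]$: on the middle piece $(1+L)r-r^{1+s}/2^{1+s}$ the function is concave (its second derivative is $-\tfrac{(1+s)s}{2^{1+s}}r^{s-1}<0$), and the linear piece $1+Lr$ has zero second difference. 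So the second difference is $\le 0$ unless at least one of the three evaluation points lies in $[0,c\delta^2]$, i.e. unless $|\xi|-2\eta< c\delta^2$, which (together with $\eta\ge 0$) forces $\eta < \tfrac{1}{2}c\delta^2$ and $|\xi| < 2c\delta^2$. Hence the whole integral reduces to $\int_0^{c\delta^2/2}$ with $|\xi|$ also of size $\lesssim c\delta^2$.

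On this small region I would bound the numerator crudely. Since $0\le\partial_r\omega(t,\cdot)\le 1+L$ everywhere (assumption 2 of \eqref{e:omegaassumptions}/\eqref{e:omegabound}, which holds by the choice of $A,B$), each of the three terms differs from $\omega(t,|\xi|)$ by at most $(1+L)\cdot 2\eta\le (1+L)c\delta^2$, so the numerator is at most $C(1+L)\min(\eta, c\delta^2)\le C(1+L)\eta$ — actually, better: because the bump is convex with $\omega''=2A(\delta)$ and $A(\delta)\le \tfrac{1+L}{2c\delta^2}$ (read off from its formula, the negative term only helps), a Taylor estimate gives the second difference $\le \omega''(t,\cdot)\cdot(2\eta)^2 \le \tfrac{1+L}{2c\delta^2}\cdot 4\eta^2 = \tfrac{2(1+L)}{c\delta^2}\eta^2$ on the portion where all points stay in the convex piece; where only some points do, one gets a comparable or smaller bound by monotonicity of $\partial_r\omega$. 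The denominator satisfies $(\eta^2+\omega(t,0)^2)^{(2+s)/2}\ge \omega(t,0)^{2+s}\ge \delta^{2+s}$ (since $\omega(t,0)=\delta+B(\delta)\ge\delta$, using $B(\delta)\ge 0$ for $c$ small).

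Putting these together, the integral is at most
\[
\int_0^{c\delta^2/2}\frac{2(1+L)}{c\delta^2}\,\frac{\eta^2}{\delta^{2+s}}\,d\eta
= \frac{2(1+L)}{c\,\delta^{4+s}}\cdot\frac{(c\delta^2/2)^3}{3}
= \frac{(1+L)}{12}\,c^2\,\delta^{2-s}\le (1+L)c^2,
\]
using $\delta\le 1$ and $s\in(0,1)$, which is the claimed bound (with room to spare). The main obstacle — and the only place requiring care rather than brute force — is the bookkeeping in the first paragraph: correctly checking that the second difference is nonpositive on every combination of pieces except the one where an evaluation point sits in the convex bump, and in particular handling the ``mixed'' cases where $|\xi|-2\eta$ is in the convex piece while $|\xi|$ and $|\xi|+2\eta$ are in the concave middle piece. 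There one uses that $\partial_r\omega(t,\cdot)$ is continuous and nonincreasing across the breakpoint $c\delta^2$ (this is exactly why $A,B$ were chosen to make $\omega(t,\cdot)$ $C^1$), so the ``convexity defect'' contributed is still controlled by $\omega''$ on the bump alone, i.e. by $2A(\delta)$ times the length of the overlap with $[0,c\delta^2]$, which is $\le c\delta^2$ — giving the same final estimate.
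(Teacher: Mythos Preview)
Your reduction in the first paragraph contains an error: the implication ``$|\xi|-2\eta<c\delta^2$ (together with $\eta\ge 0$) forces $\eta<\tfrac12 c\delta^2$ and $|\xi|<2c\delta^2$'' is false. Take for instance $|\xi|=1$, $c\delta^2=0.1$, and $\eta=0.49$: then $|\xi|-2\eta=0.02<c\delta^2$, yet neither conclusion holds. What the condition $|\xi|-2\eta<c\delta^2$ actually gives (together with $\eta\le|\xi|/2$) is only that $\eta$ lies in an interval of \emph{length} at most $c\delta^2/2$, namely $\eta\in(\max\{0,(|\xi|-c\delta^2)/2\},\,|\xi|/2]$; but $\eta$ itself can be as large as $|\xi|/2\le 1$, and $|\xi|$ is unrestricted in $[0,2]$. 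So your explicit computation with the Taylor bound $\tfrac{2(1+L)}{c\delta^2}\eta^2$ only covers the sub-case $|\xi|\lesssim c\delta^2$, not the general one.

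Your final paragraph tries to patch the mixed case ($|\xi|-2\eta$ in the convex bump, $|\xi|$ and $|\xi|+2\eta$ in the concave piece), but the stated bound ``$2A(\delta)$ times the length of the overlap with $[0,c\delta^2]$'' is too weak: that product is $\sim\tfrac{1+L}{c\delta^2}\cdot c\delta^2=1+L$, and integrating $(1+L)/\delta^{2+s}$ over an $\eta$-interval of length $c\delta^2/2$ gives $(1+L)c\delta^{-s}/2$, which diverges as $\delta\to 0$. The correct observation (and what the paper does) is that the ``convexity defect'' is the pointwise gap $\omega-\tilde\omega$, where $\tilde\omega$ is the concave function obtained by extending the middle piece $(1+L)r-r^{1+s}/2^{1+s}$ down to $r=0$; since $\omega-\tilde\omega$ vanishes to second order at $r=c\delta^2$, one gets $0\le\omega-\tilde\omega\le B(\delta)\le\tfrac{(1+L)c\delta^2}{2}$ uniformly --- i.e., the bound is of order $A(\delta)\cdot(c\delta^2)^2$, not $A(\delta)\cdot c\delta^2$. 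With this extra factor of $c\delta^2$ in the numerator, the integral over the positive region (length $\le c\delta^2$) is $\le(1+L)c\delta^2\cdot\delta^{-(2+s)}\cdot c\delta^2=(1+L)c^2\delta^{2-s}\le(1+L)c^2$, uniformly in $|\xi|\le 2$. Your Taylor approach in the fully-convex sub-case is fine and in fact sharper; the gap is only in the mixed case, where you need the square of the overlap (equivalently, the $B(\delta)$ bound) rather than the first power.
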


\begin{proof}

We claim that 
\begin{equation}\label{e:omegaposupperbound}
\omega(t,|\xi|+2\eta)+\omega(t,|\xi|-2\eta)- 2\omega(t,|\xi|) \leq \left\{\begin{array}{ll} 0, &  2\eta - |\xi|\geq c\delta^2, \\ (1+L)c\delta^2, & \text{ otherwise}  \end{array}\right.
\end{equation}

Given \eqref{e:omegaposupperbound}, it follows immediately that for $|\xi|\leq c\delta^2$
\begin{equation}
\begin{split}
\int\limits_0^{\frac{|\xi|}{2}} \frac{ \omega(t,|\xi|+2\eta)+\omega(t,|\xi|-2\eta)- 2\omega(t,|\xi|)}{(\eta^2+\omega(t,0)^2)^{(2+s)/2}}d\eta & \leq\int\limits_{0}^{c\delta(t)^2} \frac{(1+L)c\delta(t)^2}{\delta(t)^{2+s}} d\eta 
\\& =(1+L)c^2\delta(t)^{2-s}.
\end{split}
\end{equation}
For $|\xi|\geq c\delta(t)^2$, we similarly have that 
\begin{equation}
\begin{split}
\int\limits_0^{\frac{|\xi|}{2}} \frac{ \omega(t,|\xi|+2\eta)+\omega(t,|\xi|-2\eta)- 2\omega(t,|\xi|)}{(\eta^2+\omega(t,0)^2)^{(2+s)/2}}d\eta & \leq\int\limits_{\frac{|\xi| - c\delta(t)^2}{2}}^{\frac{|\xi|}{2}} \frac{(1+L)c\delta(t)^2}{\delta(t)^{2+s}} d\eta 
\\& = (1+L)c^2\delta(t)^{2-s}.
\end{split}
\end{equation}
As $\delta \leq 1$ always, we thus have that 
\begin{equation}
\begin{split}
\int\limits_0^{\frac{|\xi|}{2}} \frac{ \omega(t,|\xi|+2\eta)+\omega(t,|\xi|-2\eta)- 2\omega(t,|\xi|)}{(\eta^2+\omega(t,0)^2)^{(2+s)/2}}d\eta & \leq(1+L)c^2,
\end{split}
\end{equation}
for all $|\xi|\leq 2$.  

All that remains is to prove the claim \eqref{e:omegaposupperbound}.  Consider the function 
\begin{equation}\label{e:tildeomega}
\tilde{\omega}(t,r) =  \delta(t)+\left\{\begin{array}{ll} 1+Lr, & r\geq 2 \\ (1+L)r - \frac{r^{1+s}}{2^{1+s}},  & 0\leq r \leq 2,\end{array}\right. .
\end{equation}
Then for fixed $t$, $\tilde{\omega}(t,\cdot)$ is a concave function of $r$.  Thus whenever $|\xi|-2\eta\geq c\delta(t)^2$, we have that 
\begin{equation}
\omega(t,|\xi|+2\eta)+\omega(t,|\xi|-2\eta)- 2\omega(t,|\xi|) = \tilde{\omega}(t,|\xi|+2\eta)+\tilde{\omega}(t,|\xi|-2\eta)- 2\tilde{\omega}(t,|\xi|)\leq 0.
\end{equation} 
As we also have that 
\begin{equation}
0\leq \omega(t,r)-\tilde{\omega}(t,r)\leq \omega(t,0)-\tilde{\omega}(t,0) = B(\delta(t)) \leq \frac{(1+L)c\delta(t)^2}{2},
\end{equation}
it follows that when $|\xi|-2\eta\leq c\delta(t)^2$
\begin{equation}
\begin{split}
\omega(t,|\xi|+2\eta)+\omega(t,|\xi|-2\eta)- 2\omega(t,|\xi|) &\leq \tilde{\omega}(t,|\xi|+2\eta)+\tilde{\omega}(t,|\xi|-2\eta)- 2\tilde{\omega}(t,|\xi|)+(1+L)c\delta(t)^2 
\\&\leq (1+L)c\delta(t)^2.
\end{split}
\end{equation}

\end{proof}

With Lemma \ref{l:omegaposintbound}, we can bound the first integral in \eqref{e:omegaintegrals} by an arbitrarily small constant as $c\to 0$.  All that remains now is to get a good, negative upper bound on the second integral.

\begin{lemma}\label{l:omeganegintbound}
Let $\omega(t,r)$ be as defined in \eqref{e:omegadefn} and $|\xi|\leq 2$.  Then 
\begin{equation}\label{e:omeganegint}
\int\limits_{\frac{|\xi|}{2}}^\infty \frac{\omega(t,2\eta+|\xi|)-\omega(t,2\eta -|\xi|)-2\omega(t,|\xi|)}{(\eta^2+\omega(t,0)^2)^{(2+s)/2}}d\eta\lesssim -1.
\end{equation}
\end{lemma}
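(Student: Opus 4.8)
The plan is to show that the integrand appearing in \eqref{e:omeganegint} is \emph{pointwise} bounded above, uniformly in $\eta\ge|\xi|/2$, by a negative quantity that captures both the size $\delta(t)\approx\omega(t,0)$ and a term of order $|\xi|^{1+s}$ coming from the strict concavity of the profile in \eqref{e:omegadefn}; then integrating this single bound against the kernel will already produce a constant, so no separate treatment of a positive tail is needed. As a first step I would record, exactly as in the proof of Lemma \ref{l:omegaposintbound}, that $0\le\omega(t,r)-\tilde\omega(t,r)\le B(\delta(t))\le\tfrac12(1+L)c\,\delta(t)^2$ with $\tilde\omega$ as in \eqref{e:tildeomega}, that $g(r):=\tilde\omega(t,r)-\delta(t)$ is nondecreasing and concave on $[0,\infty)$ with $g(0)=0$, and that $\delta(t)\le\omega(t,0)\le 2\delta(t)$ once $c$ is small.

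For the pointwise bound, fix $\eta\ge|\xi|/2$, so $2\eta-|\xi|\ge 0$. Since $\tilde\omega(t,\cdot)$ is concave, the increment of $\tilde\omega$ over an interval of length $2|\xi|$ is largest when that interval starts at $0$, so $\tilde\omega(t,2\eta+|\xi|)-\tilde\omega(t,2\eta-|\xi|)\le\tilde\omega(t,2|\xi|)-\tilde\omega(t,0)=g(2|\xi|)$. Combining this with the one-sided comparisons $\omega(2\eta+|\xi|)\le\tilde\omega(2\eta+|\xi|)+B$, $\omega(2\eta-|\xi|)\ge\tilde\omega(2\eta-|\xi|)$ and $\omega(|\xi|)\ge\tilde\omega(|\xi|)$, the numerator is at most $g(2|\xi|)-2g(|\xi|)-2\delta(t)+B(\delta(t))$. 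A short computation with $g(r)=(1+L)r-2^{-(1+s)}r^{1+s}$ on $[0,2]$ (and $g(r)=1+Lr$ for $r\ge2$) gives $g(2|\xi|)-2g(|\xi|)\le-(1-2^{-s})\min\{|\xi|,1\}^{1+s}\le-\kappa_s|\xi|^{1+s}$ for all $|\xi|\le2$, where $\kappa_s:=2^{-(1+s)}(1-2^{-s})$; and for $c$ small, $-2\delta+B\le-\delta$. Hence, for all $\eta\ge|\xi|/2$,
\begin{equation*}
\omega(t,2\eta+|\xi|)-\omega(t,2\eta-|\xi|)-2\omega(t,|\xi|)\ \le\ -\kappa_s|\xi|^{1+s}-\delta(t).
\end{equation*}

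It then remains to bound $J:=\int_{|\xi|/2}^\infty(\eta^2+\omega(t,0)^2)^{-(2+s)/2}\,d\eta$ from below and to pair it with the two terms above, which I would do in two regimes. If $|\xi|\le2\omega(t,0)$, restricting the integral to $\eta\in[\omega(t,0),2\omega(t,0)]$ gives $J\gtrsim\omega(t,0)^{-(1+s)}\gtrsim\delta(t)^{-(1+s)}$, so the $-\delta(t)$ term alone yields a bound $\lesssim-\delta(t)^{-s}\le-1$ since $\delta\le1$. If $|\xi|>2\omega(t,0)$, then $\eta^2+\omega(t,0)^2\le2\eta^2$ on the domain of integration, so $J\gtrsim_s|\xi|^{-(1+s)}$, and the $-\kappa_s|\xi|^{1+s}$ term yields a bound $\lesssim_s-1$. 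Combining the regimes proves \eqref{e:omeganegint}; the implied constant depends only on $s$ (and is universal for $s$ bounded away from $0$), consistent with $T(d,s,L)\to\infty$ as $s\to0$.

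The step I expect to be the main obstacle is the pointwise bound: it must be uniform in $\eta$ and must retain the term of size $|\xi|^{1+s}$ rather than only $O(\delta(t))$, because when $\delta(t)$ is tiny and $|\xi|$ is of order one the $-\delta(t)$ contribution is far too weak to survive division by $(\eta^2+\omega(t,0)^2)^{(2+s)/2}\sim|\xi|^{2+s}$. It is precisely the strict concavity of $\tilde\omega$, quantified through the $r^{1+s}$ correction in \eqref{e:omegadefn}, that produces this term; one must also check that the non-concave modification of $\omega$ on $[0,c\delta^2]$ — forced elsewhere by the requirement $\partial_r\omega(t,0)=0$ — perturbs the estimate by no more than $B(\delta(t))\ll\delta(t)$, which is where the smallness of $c$ enters.
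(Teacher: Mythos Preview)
Your argument is correct. The pointwise bound
\[
\omega(t,2\eta+|\xi|)-\omega(t,2\eta-|\xi|)-2\omega(t,|\xi|)\le -\kappa_s|\xi|^{1+s}-\delta(t)
\]
checks out: the comparison with $\tilde\omega$ and the concavity step $g(2\eta+|\xi|)-g(2\eta-|\xi|)\le g(2|\xi|)$ are exactly right, the computation of $g(2|\xi|)-2g(|\xi|)$ is accurate in both ranges $|\xi|\le 1$ and $1<|\xi|\le 2$, and the error $B(\delta)$ is absorbed for small $c$. The two-regime lower bound on $J$ then finishes the job, with the implicit constant degenerating like $s$ as $s\to 0$, which is consistent with the paper's eventual choice of $T$.

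The paper's own proof is organized differently. Rather than a single uniform pointwise bound carrying both the $-\delta(t)$ and the $-\kappa_s|\xi|^{1+s}$ contributions, it records two separate numerator bounds (the universal $\le -\delta(t)$, and a sharper comparison with $\tilde\omega$ valid once $|\xi|\ge \delta/(2(1+L))$) and then splits into \emph{three} $|\xi|$-regimes: small ($|\xi|\le\delta$), intermediate ($\delta\le|\xi|\le 1/2$, where a scaling substitution $\tilde\eta=\eta/|\xi|$ isolates the $r^{1+s}$ contribution), and large ($|\xi|\ge 1/2$, handled by the crude bound $\omega(t,|\xi|)\ge 1/4+L|\xi|$ and integration from $\eta=2$). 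Your approach compresses the intermediate and large cases into one, at the cost of a slightly less explicit constant in the large-$|\xi|$ range (you get $\sim s$ there, whereas the paper gets $\sim 1$; but since the intermediate range already gives $\sim s$ in both proofs, the overall result is the same). The upshot is that your route is tidier and avoids the change of variables, while the paper's case split makes the source of each contribution a bit more transparent.
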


\begin{proof}
Again, take  $\tilde{\omega}(t,r)$ to be  
\begin{equation}
\tilde{\omega}(t, r) = \delta(t)+\left\{\begin{array}{ll} 1+Lr, & r\geq 2 \\ (1+L)r - \frac{r^{1+s}}{2^{1+s}},  & 0 \leq r \leq 2, \end{array}\right. 
\end{equation}
Then we claim that for $\eta\geq \displaystyle\frac{|\xi|}{2}$,
\begin{equation}\label{e:deltaboundbelow}
\omega(t,2\eta+|\xi|)- \omega(t,2\eta-|\xi|) - 2\omega(t,|\xi|) \leq \left\{\begin{array}{ll} \tilde{\omega}(t,2\eta+|\xi|)- \tilde{\omega}(t,2\eta-|\xi|) - 2\tilde{\omega}(t,|\xi|), & |\xi| \geq \frac{\delta(t)}{2(1+L)},, \\ -\delta(t), & |\xi|\leq \frac{\delta(t)}{2(1+L)},  \end{array} \right.
\end{equation}
To see this, note that $\tilde{\omega}(t,r)\leq \omega(t,r)$ with equality for $r\geq c\delta(t)^2$ by the definition of $\tilde{\omega}$ \eqref{e:tildeomega}.  Thus in the case that $\displaystyle |\xi| \geq \frac{\delta(t)}{2(1+L)}, \eta\geq \frac{|\xi|}{2}$, we have that $2\eta+|\xi|\geq \displaystyle\frac{\delta(t)}{1+L}\geq c\delta(t)^2$  so \eqref{e:deltaboundbelow} follows immediately.  And when $|\xi|\leq \displaystyle\frac{\delta(t)}{2(1+L)}$, we then have that  
\begin{equation}
\omega(t,2\eta+|\xi|)- \omega(t,2\eta-|\xi|) - 2\omega(t,|\xi|) \leq 2|\xi|(1+L)- 2\omega(t,0)\leq -\delta(t).
\end{equation}
Thus we've proven \eqref{e:deltaboundbelow}.  Note that as 
\begin{equation}
\tilde{\omega}(t,2\eta+|\xi|)- \tilde{\omega}(t,2\eta-|\xi|) - 2\tilde{\omega}(t,|\xi|) \leq \tilde{\omega}(t,2\eta)- \tilde{\omega}(t,2\eta) - 2\tilde{\omega}(t,0)\leq -2\delta(t),
\end{equation}
we always have that 
\begin{equation}\label{e:deltaboundbelow2}
\omega(t,2\eta+|\xi|)- \omega(t,2\eta-|\xi|) - 2\omega(t,|\xi|) \leq -\delta(t).
\end{equation}

Now to prove \eqref{e:omeganegint}, we will consider three cases.  First consider small $\xi$, where $|\xi|\leq \delta(t)$.  
Then using \eqref{e:deltaboundbelow2} we get that 
\begin{equation}
\begin{split}
\int\limits_{\frac{|\xi|}{2}}^\infty \frac{\omega(t,2\eta+|\xi|)- \omega(t, 2\eta-|\xi|) - 2\omega(t, |\xi|)}{ (\eta^2+\omega(t,0)^2)^{(2+s)/2}}d\eta &\lesssim \int\limits_{\frac{\delta(t)}{2}}^{\delta(t)} \frac{-\delta(t)}{\delta(t)^{2+s}}d\eta 
\\&\lesssim -\delta(t)^{-s}\leq -1.  
\end{split}
\end{equation}
as $\delta \leq 1$.  

For the second case, we consider midsize $\xi$ where $\delta(t)\leq |\xi|\leq \displaystyle\frac{1}{2}$.  Then using \eqref{e:deltaboundbelow}, we have that 
\begin{equation}
\begin{split}
\int\limits_{\frac{|\xi|}{2}}^\infty \frac{\omega(t,2\eta+|\xi|)- \omega(t,2\eta-|\xi|) - 2\omega(t,|\xi|)}{ (\eta^2+\omega(t,0)^2)^{(2+s)/2}}d\eta &\lesssim\int\limits_{\frac{|\xi|}{2}}^{\frac{1}{2}} \frac{-(2\eta+|\xi|)^{1+s} + (2\eta - |\xi|)^{1+s} + 2|\xi|^{1+s}}{\eta^{2+s}}d\eta 
\\&= \int\limits_{\frac{1}{2}}^{\frac{1}{2|\xi|}} \frac{-(2\tilde{\eta} + 1)^{1+s} + (2\tilde{\eta}-1)^{1+s} + 2}{\tilde{\eta}^{2+s}}d\tilde{\eta} 
\\&\leq \int\limits_{\frac{1}{2}}^{1} \frac{-(2\tilde{\eta} + 1)^{1+s} + (2\tilde{\eta}-1)^{1+s} + 2}{\tilde{\eta}^{2+s}}d\tilde{\eta}  
\\& \leq\int\limits_{\frac{1}{2}}^{1} \frac{-2^{1+s}+ 2}{\tilde{\eta}^{2+s}}d\tilde{\eta}  
 \leq \int\limits_{\frac{1}{2}}^1 \frac{-2\ln(2)s}{\tilde{\eta}^{2+s}}d\tilde{\eta}
 \\&\lesssim -s.  
\end{split}
\end{equation}

Finally, suppose that $\xi$ is large so $2\geq |\xi|\geq \displaystyle\frac{1}{2}$.  Then $\omega(t,|\xi|)\geq \displaystyle\frac{1}{4}+L|\xi|$, so 
\begin{equation}
\begin{split}
\int\limits_{\frac{|\xi|}{2}}^\infty \frac{\omega(t,2\eta+|\xi|)- \omega(t,2\eta-|\xi|) - 2\omega(t,|\xi|)}{ (\eta^2+\omega(t,0)^2)^{(2+s)/2}}d\eta &\lesssim \int\limits_{2}^{\infty} \frac{-2\omega(t,|\xi|)}{\eta^{2+s}}d\eta 
\\&\lesssim -1
\end{split}
\end{equation}
\end{proof}

Take $c<<\displaystyle\frac{s}{1+L}$ and $\delta(t) = \displaystyle\frac{T-t}{T}$ in \eqref{e:omegadefn} for some
\begin{equation} \label{e:Tdefn}
T\gtrsim \frac{(3(1+L))^{d+s}\Gamma\left(\frac{d+s}{2}\right)}{s^2(1-s)\Gamma\left(\frac{d-2}{2}\right)\Gamma\left(\frac{2+s}{2}\right)}, \quad d\geq 3,  \qquad \left( T\gtrsim  \frac{(1+L)^{2+s}}{s^2(1-s)}, \quad d=2\right)
\end{equation}
Then combining Lemmas \ref{l:P.V.estimate} through \ref{l:omeganegintbound}, we have under the assumptions of the break through argument at the end of Section 2 that 
\begin{equation}
\begin{split}
\partial_t \overline{u}&\left(t_0,\frac{\xi}{2}\right) - \partial_t \underline{u}\left(t_0,\frac{-\xi}{2}\right) 
\\&= s(1-s)\sqrt{1+ \partial_r\omega(t,|\xi|)^2}  \int\limits_{\R^{d-1}}\int\limits_\R \frac{\1_E^\pm\left(\frac{\xi}{2}+z, \overline{u}\left(t_0,\frac{\xi}{2}\right)+z_d\right)-\1_E^\pm\left(\frac{-\xi}{2}+z, \underline{u}\left(t_0,\frac{-\xi}{2}\right)+z_d\right)}{(|z|^2+z_d^2)^{(d+s)/2}}dzdz_d
\\&\leq C(d,s,L)\left( \int\limits_0^{\frac{|\xi|}{2}} \frac{\omega(|\xi|+2\eta)+\omega(|\xi|-2\eta)-2\omega(|\xi|)}{(\eta^2+\omega(0)^2)^{(2+s)/2}}d\eta + \int\limits_{\frac{|\xi|}{2}}^\infty \frac{\omega(2\eta+|\xi|)-\omega(2\eta -|\xi|)-2\omega(|\xi|)}{(\eta^2+\omega(0)^2)^{(2+s)/2}}d\eta\right)
\\&\leq \frac{-2}{T} = 2\delta'(t_0) < \partial_t \omega(t, |\xi|),
\end{split}
\end{equation}
a contradiction.  Thus for any smooth flow $t\to E_t$ with with initial data $E_0$ satisfying the modulus $1+Lr$ and flat at infinity, we have that $E_t$ has modulus $\omega(t,\cdot)$ for all $t\in [0,T]$.  In particular, $\partial E_T$ is a $(1+L)$-Lipschitz graph.

%%%%%%%%%%%%%%%%%%%%%%%%%%%%%%%%%%%%%%%%%%%%%%%%%%%%%%%%%%%%%%%%%%%%%%%Section 6                     																							%%%%%%%%%%%%%%%%%%%%%%%%%%%%%%%%%%%%%%%%%%%%%%%%%%%%%%%%%%%%%%%%%%%%%%

\section{Viscosity Solutions and Technicalities}

Sections 3 through 5 gave the proof of Theorem \ref{t:main} in the case that we have a smooth flow.  But even for smooth initial data, there's no guarantee a unique, smooth solution of fractional mean curvature flow exists.  So instead we work with viscosity solutions.  See the appendix or \cite{CyrilFract,Chambolle} for appropriate definitions and details.  

Fix a tuple $(E_0^-, \Gamma_0, E_0^+)$ with $E_0^\pm\subseteq\R^d$ open, $\Gamma_0\subseteq\R^d$ closed, all are disjoint and $E_0^-\cup \Gamma_0 \cup E_0^+ = \R^d$.  We then have that there is a unique viscosity solutions $(E_t^-, \Gamma_t, E_t^+)$ of \eqref{e:meancurvfloweqnset} in the sense of Definition \ref{d:viscosityset} for all times $t$.  

If we knew a priori that $\mathcal{L}^d(\Gamma_t) = 0$ for all times $t$, then we could repeat the same argument as in the smooth case with only minor alterations.  
However that is not the case in general, so we must adjust.  

Our first goal is to prove Theorem \ref{t:main} under the assumptions that 
\begin{equation}\label{e:initialassumptions}
\left\{\begin{array}{l} 1). E_0^\pm \text{ have modulus } (1-\eta)+Lr,
\\ 2). 0 \in \Gamma_0
\\ 3). \Gamma_0 \setminus (B_M^{d-1}\times \R) = \{(x,0): x\in \R^{d-1}, |x|\geq M\},
\end{array}\right.
\end{equation}
for some $0<\eta<<1$ and $1<<M<\infty$.  We will later be able to remove the last condition and let $\eta\to 0$.  But for now these are convenient assumptions.  

Let $U_0: \R^d \to \R$ be the signed distance function 
\begin{equation}\label{e:U0}
U_0(X) = \left\{\begin{array}{ll} \min\{d(X, \Gamma_0),1\}, & X\in E_0^+, \\ \max\{-d(X,\Gamma_0), -1\}, & X\in E_0^-  \end{array}\right. ,
\end{equation}
and let $U(t,X)$ be the unique viscosity solution to the level set equation \eqref{e:viscosity} for the initial data $U_0$.  Then for any $\gamma\in [-1,1]$, we can define the tuple $(E_t^{\gamma-},\Gamma_t^\gamma, E_t^{\gamma+})$ by 
\begin{equation}\label{e:gammatupledefn}
E_t^{\gamma -} = \{U(t,\cdot)<\gamma\},\quad \Gamma_t^\gamma = \{U(t,\cdot) = \gamma\}, \quad E_t^{\gamma+} = \{U(t,\cdot)>\gamma\}.
\end{equation}
Note that $(E_t^{0-}, \Gamma_t^0, E_t^{0+}) = (E_t^-, \Gamma_t, E_t^+)$ is our original viscosity solution triple.  

\begin{lemma}\label{l:gammainitialmod}
Let $(E_0^-,\Gamma_0, E_0^+)$ satisfy \eqref{e:initialassumptions} and $(E_t^{\gamma -}, \Gamma_t^\gamma, E_t^{\gamma +} )$ be as in \eqref{e:gammatupledefn}.  Then $E_t^{\gamma\pm} $ have modulus of continuity $1+Lr$ for all times $t$ and $|\gamma|<\displaystyle\frac{\eta}{\sqrt{1+L^2}}$.  
\end{lemma}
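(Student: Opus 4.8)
The plan is to use the signed‑distance structure of $U_0$ together with the comparison principle for the level‑set flow, exactly as in Proposition 2.5 (Proposition \ref{p:propagation}), but now applied level set by level set. First I would record the key geometric fact about the initial datum: because $E_0^\pm$ have modulus $(1-\eta)+Lr$, Definition \ref{d:setmodulusdefn} gives $E_0^+ - (z,z_d)\subseteq E_0^+$ and $E_0^- + (z,z_d)\subseteq E_0^-$ whenever $z_d\ge (1-\eta)+L|z|$; in particular the full half‑line above any point of $\Gamma_0$ in the direction $(-z/|z|\cdot L,\dots,1)$-type cones lands in $E_0^+$. I would then translate this into a statement about $U_0$: I claim that if $|\gamma|<\eta/\sqrt{1+L^2}$ and $z_d\ge 1+L|z|$, then $U_0(X-(z,z_d))\le \gamma$ whenever $U_0(X)\le\gamma$, i.e. $\{U_0\le\gamma\}-(z,z_d)\subseteq\{U_0\le\gamma\}$, and symmetrically $\{U_0\ge\gamma\}+(z,z_d)\subseteq\{U_0\ge\gamma\}$. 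The reason is that $U_0$ is $1$‑Lipschitz and truncated at $\pm1$, so moving a point $X$ with $U_0(X)\le\gamma\le\eta/\sqrt{1+L^2}$ by $(z,z_d)$ with $z_d\ge 1+L|z|$ first moves it a signed distance $\ge (1+L|z|)/\sqrt{1+L^2}-$ (correction from the horizontal part) into $E_0^-\cup\Gamma_0$; the slack $1-(1-\eta)=\eta$ coming from the modulus of $E_0^\pm$, once divided by $\sqrt{1+L^2}$ to account for the tilt of the cone, is exactly what absorbs the level $\gamma$. This is the one genuinely computational point and I would do it by comparing the cone $\{x_d\ge (1-\eta)+L|x|\}$ separating $E_0^-$ from $\Gamma_0$ against the slightly wider cone $\{z_d\ge 1+L|z|\}$.

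Next I would push this forward in time. Fix $Z=(z,z_d)$ with $z_d\ge 1+L|z|$ and set $U^Z(t,X):=U(t,X-Z)$. By translation invariance of the level‑set equation \eqref{e:viscosity} (Proposition \ref{p:fracmeancurvbasics}), $U^Z$ is the unique viscosity solution with initial datum $U_0(\cdot-Z)$. By the previous paragraph, $\{U_0\le\gamma\}-Z\subseteq\{U_0\le\gamma\}$ for every $|\gamma|<\eta/\sqrt{1+L^2}$, which is the same as saying $U_0(\cdot-Z)$ and $U_0$ are ordered on each such sublevel set; since this holds for a full interval of $\gamma$ it is equivalent to the pointwise inequality $U_0(X-Z)\le\max\{U_0(X),\ \eta/\sqrt{1+L^2}\}$ and $U_0(X-Z)\ge\min\{U_0(X),-\eta/\sqrt{1+L^2}\}$ — or more cleanly, I would just invoke the geometric (set) comparison principle of Definition \ref{d:viscosityset}/Proposition \ref{p:comparisonprinciple} directly at each level: $\{U_0\le\gamma\}-Z\subseteq\{U_0\le\gamma\}$ implies, applying comparison to the triples defining the $\gamma$‑level flow, that $\{U(t,\cdot)\le\gamma\}-Z\subseteq\{U(t,\cdot)\le\gamma\}$ for all $t\ge0$, and symmetrically $\{U(t,\cdot)\ge\gamma\}+Z\subseteq\{U(t,\cdot)\ge\gamma\}$. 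Taking interiors gives $E_t^{\gamma+}-Z\subseteq E_t^{\gamma+}$ and $E_t^{\gamma-}+Z\subseteq E_t^{\gamma-}$; since $E_t^{\gamma-}+Z\subseteq E_t^{\gamma-}$ rewrites as $E_t^{\gamma-}-Z\subseteq E_t^{\gamma-}$ only after reflecting, I would phrase the modulus for $E_t^{\gamma-}$ using that $\mathcal{C}E_t^{\gamma-}=\overline{E_t^{\gamma+}}$ up to $\Gamma_t^\gamma$ and that the modulus condition for $E$ in the $x_d$‑direction is the condition for $\mathcal CE$ in the $-x_d$‑direction; in either case Definition \ref{d:setmodulusdefn} with $\omega(r)=1+Lr$ is verified. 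Ranging over all admissible $Z$ and all $|\gamma|<\eta/\sqrt{1+L^2}$ completes the proof.

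The main obstacle I expect is the bookkeeping in the first paragraph: correctly computing how the modulus constant $(1-\eta)$ of the sets $E_0^\pm$ degrades when one passes to the $\gamma$‑sublevel sets of the signed distance function, and checking that the factor $1/\sqrt{1+L^2}$ (the cosine of the cone's opening relative to the vertical) is exactly the right threshold so that $\{U_0\le\gamma\}$ still has modulus $1+Lr$ — i.e. that one gains back the full "$1$" from "$1-\eta$" precisely when $|\gamma|<\eta/\sqrt{1+L^2}$. A clean way to handle this is to note that the signed distance function to $\Gamma_0$ restricted to a vertical line is itself $1$‑Lipschitz and that the sublevel set $\{U_0\le\gamma\}$ lies between the $\gamma\sqrt{1+L^2}$‑vertical‑neighborhoods of $E_0^-$, so it is sandwiched between two Lipschitz subgraphs whose separation is $(1-\eta)+2\gamma\sqrt{1+L^2}<1$; then one applies the already‑proven equivalence (Proposition 2.?, the "modulus $=R+Lr$ iff bounded between two Lipschitz subgraphs" statement) to conclude $\{U_0\le\gamma\}$ has modulus $1+Lr$, and similarly for $\{U_0\ge\gamma\}$. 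After that, everything is a routine application of translation invariance and comparison, with no new ideas beyond Proposition \ref{p:propagation}.
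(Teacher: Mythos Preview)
Your proposal is correct and follows essentially the same route as the paper. The paper's proof is terser: it simply sets $A=\{x_d\le 1-\eta+L|x|\}$, $B=\{y_d\ge 1+L|y|\}$, computes $d(A,B)=\eta/\sqrt{1+L^2}$ directly, checks by cases that $E_0^{\gamma\pm}$ have modulus $1+Lr$ for $|\gamma|\le\eta/\sqrt{1+L^2}$ via Definition~\ref{d:setmodulusdefn}, and then invokes Proposition~\ref{p:propagation}---which is exactly the geometric distance computation you identify as ``the one genuinely computational point'' and the propagation step you describe.
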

\begin{proof}
Let $A = \{(x,x_d):  x_d\leq 1-\eta + L|x|\}$ and $B = \{(y,y_d):  y_d\geq 1+L|y|\}$.  Direct calculation then gives that 
\begin{equation}
d(A,B ) = \displaystyle\frac{\eta}{\sqrt{1+L^2}}.
\end{equation}
Using the set formulation of a modulus of continuity (Definition \ref{d:setmodulusdefn}), checking cases then gives you that $E_0^{\gamma\pm}$ have modulus $1+Lr$ for $|\gamma|\displaystyle\leq \frac{\eta}{\sqrt{1+L^2}}$.  Proposition \ref{p:propagation} then implies that this remains true for all $t\geq 0$.  
\end{proof}

\begin{proposition}
Let $(E_t^{\gamma-},\Gamma_t^\gamma, E_t^{\gamma+})$ be as in \eqref{e:gammatupledefn}.  Then for almost every $\gamma\in [-1,1]$, 
\begin{equation}\label{e:0boundaryaet}
\mathcal{L}^{d}(\Gamma_t^\gamma) = 0 \text{ for almost every time } t\geq 0.
\end{equation}
\end{proposition}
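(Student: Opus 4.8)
The plan is to forget about the flow almost entirely and argue purely at the level of the (continuous) level-set function $U$, using the pairwise disjointness of its level sets together with Fubini's theorem applied twice. Introduce the bad set
\[
S := \left\{(\gamma,t)\in[-1,1]\times[0,\infty) : \mathcal{L}^d(\Gamma_t^\gamma) > 0\right\}.
\]
The claim \eqref{e:0boundaryaet} is precisely the assertion that for $\mathcal{L}^1$-a.e.\ $\gamma$ the vertical slice $S^\gamma = \{t : (\gamma,t)\in S\}$ is $\mathcal{L}^1$-null. By Fubini this follows once I show that $S$ is $(\mathcal{L}^1\times\mathcal{L}^1)$-measurable and that $(\mathcal{L}^1\times\mathcal{L}^1)(S) = 0$; and I would get the latter from Tonelli by showing that every \emph{horizontal} slice $S_t = \{\gamma : (\gamma,t)\in S\}$ is $\mathcal{L}^1$-null (indeed countable), since then $\int_0^T \mathcal{L}^1(S_t)\,dt = 0$ for all $T$, hence $S$ is null.

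The countability of $S_t$ is the one genuinely geometric input, and it is elementary: for fixed $t$ and fixed $\rho>0$ the sets $\{\Gamma_t^\gamma\cap B_\rho\}_{\gamma\in[-1,1]}$ are pairwise disjoint subsets of $B_\rho$, so $\sum_{\gamma}\mathcal{L}^d(\Gamma_t^\gamma\cap B_\rho)\le\mathcal{L}^d(B_\rho)<\infty$ and hence only countably many $\gamma$ can have $\mathcal{L}^d(\Gamma_t^\gamma\cap B_\rho)>0$. Since $\Gamma_t^\gamma=\bigcup_{\rho\in\N}(\Gamma_t^\gamma\cap B_\rho)$, the set $S_t$ is a countable union of countable sets, hence countable and $\mathcal{L}^1$-null.

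For the measurability of $S$ I would use that the level-set solution $U$ is continuous on $[0,\infty)\times\R^d$ (see the appendix and \cite{CyrilFract,Chambolle}), hence in particular jointly measurable. For each $\rho\in\N$ the function
\[
F_\rho(t,\gamma) := \mathcal{L}^d\!\left(\{X\in B_\rho : U(t,X)\le\gamma\}\right) = \int_{B_\rho}\1_{\{U(t,X)\le\gamma\}}\,dX
\]
is then jointly measurable in $(t,\gamma)$ by Tonelli, and for each fixed $t$ it is nondecreasing and bounded in $\gamma$ with left limit $F_\rho(t,\gamma^-)=\mathcal{L}^d(\{X\in B_\rho:U(t,X)<\gamma\})=\lim_n F_\rho(t,\gamma-\tfrac1n)$, which is therefore also measurable. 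Consequently the jump
\[
J_\rho(t,\gamma) := F_\rho(t,\gamma)-F_\rho(t,\gamma^-) = \mathcal{L}^d(\Gamma_t^\gamma\cap B_\rho)
\]
is measurable in $(t,\gamma)$, and since $\mathcal{L}^d(\Gamma_t^\gamma)=\lim_{\rho\to\infty}J_\rho(t,\gamma)$ by continuity of measure from below, the map $(\gamma,t)\mapsto\mathcal{L}^d(\Gamma_t^\gamma)$ is measurable and $S$ is measurable. Combining the three pieces as above finishes the proof.

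The only real obstacle is making the measurability in the previous paragraph genuinely airtight — the geometric content is just the trivial disjointness estimate for $S_t$ — and that is exactly where joint continuity of the level-set solution $U$ is used. No property of the fractional mean curvature flow beyond the existence of a continuous level-set solution enters.
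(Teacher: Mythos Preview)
Your argument is correct, and the underlying idea --- pairwise disjointness of level sets plus Fubini --- is the same as the paper's. The paper organizes it more compactly, however: rather than slicing by $t$ and studying the bad set $S\subset[-1,1]\times[0,\infty)$, it works directly in spacetime and observes that the $(d{+}1)$-dimensional level sets $\{(t,X):U(t,X)=\gamma\}$ are pairwise disjoint, so at most countably many $\gamma$ can satisfy $\mathcal{L}^{d+1}(\{U=\gamma\})>0$; a single application of Fubini in $(t,X)$ then gives \eqref{e:0boundaryaet} for every $\gamma$ outside this countable set. This buys two things over your route: it avoids the separate measurability argument for $S$ (only continuity of $U$ is needed to make $\{U=\gamma\}$ measurable), and it yields the marginally stronger conclusion that the exceptional set of $\gamma$ is countable rather than merely null. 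Your approach, on the other hand, makes the role of each fixed-time slice explicit and would generalize more readily if one only had measurability of $U(t,\cdot)$ for each $t$ rather than joint continuity.
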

\begin{proof}
This follows easily from the fact that there are at most countably many $\gamma\in \R$ such that 
\begin{equation}
\mathcal{L}^{d+1}(\{U(\cdot, \cdot)=\gamma\}) = \mathcal{L}^{d+1}\left(\bigcup\limits_{t\geq 0} \{t\}\times \Gamma_t^\gamma\right)\not= 0
\end{equation}
\end{proof}

As a slight abuse of notation, we define $\overline{u}^\gamma,\underline{u}^\gamma: [0,\infty)\times \R^{d-1}\to \R$ by
\begin{equation}
\overline{u}^\gamma(t,x) = \max\{x_d| (x,x_d)\in \Gamma_t^\gamma\}, \quad \underline{u}^\gamma(t,x) = \min\{x_d| (x,x_d)\in \Gamma_t^\gamma\}.
\end{equation}

Our goal now is to show that for any $\gamma$ such that Lemma \ref{l:gammainitialmod} and \eqref{e:0boundaryaet} hold, $\Gamma_t^\gamma$ becomes a $(1+L)$-Lipschitz graph in finite time.  Explicitly, 

\begin{lemma}\label{l:assumptionscase}
Assume $(E_0^-, \Gamma_0, E_0^+)$ satisfy \eqref{e:initialassumptions}.  Then for any $|\gamma|<\displaystyle\frac{\eta}{\sqrt{1+L^2}}$  with 
\begin{equation}
\mathcal{L}^{d}(\Gamma_t^\gamma) = 0 \text{ for almost every time } t\geq 0,
\end{equation}
$\Gamma_t^\gamma$ has modulus $\omega\left(\frac{t}{2}, \cdot\right)$ for times $t\in [0,2T]$.  That is, 
\begin{equation}
\overline{u}^\gamma(t,x)-\underline{u}^\gamma(t,y)\leq \omega\left(\frac{t}{2}, |x-y|\right),
\end{equation}
for all $(t,x,y)\in [0,2T]\times \R^{d-1}\times \R^{d-1}$ where $\omega$ is as in \eqref{e:omegadefn} and $T$ is as in \eqref{e:Tdefn}.
\end{lemma}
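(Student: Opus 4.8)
The plan is to carry out the breakthrough argument of Section~3 and the curvature estimates of Sections~4--5 with the level set $\Gamma_t^\gamma$ in place of the smooth boundary $\partial E_t$, using the results already established in this section to stand in for the smoothness that was used before. Fix $\gamma$ with $|\gamma|<\eta/\sqrt{1+L^2}$ and $\mathcal L^d(\Gamma_t^\gamma)=0$ for a.e.\ $t\ge 0$. Three ingredients are needed. First, by Lemma~\ref{l:gammainitialmod} the open sets $E_t^{\gamma\pm}$ have modulus $1+Lr$ for every $t$, so $\overline u^\gamma,\underline u^\gamma$ are well defined with $\overline u^\gamma(t,x)-\underline u^\gamma(t,y)\le 1+L|x-y|$ for all $x,y$; this plays the role of Proposition~\ref{p:propagation} in the breakthrough argument. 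Second, by assumption~3 of \eqref{e:initialassumptions} the initial level set $\Gamma_0^\gamma$ coincides with a horizontal hyperplane outside a cylinder, so Proposition~\ref{p:limitlemma}, applied to the translated viscosity solution with datum $(E_0^{\gamma-},\Gamma_0^\gamma,E_0^{\gamma+})$, gives that $\overline u^\gamma(t,x)$ and $\underline u^\gamma(t,x)$ tend to the same constant as $|x|\to\infty$, uniformly for $t\in[0,2T]$. Third, because $U$ is continuous and $\Gamma_t^\gamma=\{U(t,\cdot)=\gamma\}$, the map $t\mapsto\Gamma_t^\gamma$ has closed graph, so $\overline u^\gamma$ is upper semicontinuous and $\underline u^\gamma$ is lower semicontinuous jointly in $(t,x)$ --- the strengthening of Proposition~\ref{p:semicont} that replaces ``the flow is smooth'' in Section~3.

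With these in hand the breakthrough argument runs line by line: if $\Gamma_t^\gamma$ lost the modulus $\omega(t/2,\cdot)$ before time $2T$, set $t_0=\sup\{t\le 2T:\Gamma_t^\gamma\text{ has modulus }\omega(t/2,\cdot)\}$; then $t_0<2T$, the modulus $\omega(t_0/2,\cdot)$ holds at $t_0$, and strictness of this modulus at $t_0$ would, by uniform joint semicontinuity together with the uniform decay at infinity, persist for a short time, contradicting the definition of $t_0$. Hence there is $\xi\in\R^{d-1}$ with $|\xi|\le 2$ such that, after a horizontal translation placing the touching points at $\pm\xi/2$,
\[
\overline u^\gamma\!\left(t_0,\tfrac\xi2\right)-\underline u^\gamma\!\left(t_0,-\tfrac\xi2\right)=\omega\!\left(\tfrac{t_0}{2},|\xi|\right),
\]
and everything reduces to ruling this out.

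To get the contradiction I would work with the set formulation of the modulus (Definition~\ref{d:setmodulusdefn}) and the comparison principle (Proposition~\ref{p:comparisonprinciple}), which is how the estimates of Section~4 become rigorous for weak solutions. Inspecting Lemmas~\ref{l:fixedzestimate}--\ref{l:intrearrange}, the only place smoothness was used is the identification $H_s(X,E)=H_s(X,\overline E)$ noted in the remark after \eqref{e:preciseboundarydiff}; this is legitimate exactly when $\Gamma_t^\gamma$ has zero measure, which holds for a.e.\ $t$. The point of having slowed the modulus down by a factor of two is to provide the room to apply the estimate at good times close to --- rather than exactly at --- the critical time: where $\mathcal L^d(\Gamma_t^\gamma)=0$, inequality \eqref{e:timeomegabound} holds in the viscosity sense, and combining it with Lemmas~\ref{l:omegaposintbound} and~\ref{l:omeganegintbound} (taking $c\ll s/(1+L)$ and $T$ as in \eqref{e:Tdefn}) yields
\[
\partial_t\overline u^\gamma\!\left(t,\tfrac\xi2\right)-\partial_t\underline u^\gamma\!\left(t,-\tfrac\xi2\right)\le-\frac2T,
\]
while Lemma~\ref{l:omegatimederivbound} gives $\partial_t\big[\omega(t/2,|\xi|)\big]=\tfrac12\partial_t\omega(t/2,|\xi|)>\delta'(t/2)=-1/T$. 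Since $-2/T<-1/T$, the ordering $\overline u^\gamma(t,\xi/2)-\underline u^\gamma(t,-\xi/2)\le\omega(t/2,|\xi|)$ is strictly improving at these points as $t$ crosses $t_0$, contradicting the choice of $t_0$. Therefore $\Gamma_t^\gamma$ retains the modulus $\omega(t/2,\cdot)$ on all of $[0,2T]$, and in particular $\Gamma_{2T}^\gamma$ is a $(1+L)$-Lipschitz graph.

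I expect the real difficulty to be this last step: converting the pointwise curvature computation of Sections~4--5 into a valid statement for the possibly non-graphical, positive-measure level sets that arise for viscosity solutions. The resolution rests on the three devices above --- the a.e.-in-time vanishing of $\mathcal L^d(\Gamma_t^\gamma)$ from the proposition following Lemma~\ref{l:gammainitialmod}, the factor-of-two slowdown of $\omega$, which supplies the margin $1/T$ needed to absorb the time gap between $t_0$ and a nearby good time, and the comparison principle, which lets one phrase the curvature estimate in terms of nested translates of the open sets $E_t^{\gamma\pm}$ rather than of the multivalued boundary $\Gamma_t^\gamma$ itself. These are exactly the points the technical lemmas of this section are designed to handle.
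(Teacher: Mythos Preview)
Your outline captures the overall strategy correctly, and you have identified the right pressure point: making the Section~4 curvature estimate rigorous at the breakthrough time when $\Gamma_{t_0}^\gamma$ may have positive measure. But the mechanism you propose --- ``apply the estimate at good times close to, rather than exactly at, $t_0$'' --- is not sufficient as stated, and the paper's actual argument is substantially more delicate.

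The difficulty is twofold. First, the viscosity subsolution inequality for $\overline u^\gamma$ involves the \emph{closed} superlevel set $E_t^{\gamma-}\cup\Gamma_t^\gamma$, while the supersolution inequality for $\underline u^\gamma$ involves the \emph{open} set $E_t^{\gamma-}$; their difference in \eqref{e:preciseboundarydiff} therefore picks up an extra term $2\1_{\Gamma_t^\gamma}$ in the integrand, which is not present in the smooth case and which must be bounded \emph{at the crossing time itself}. Zero measure on a dense set of times does not let you evaluate this at $t_0$. Second, the breakthrough argument requires the crossing to happen on an open time interval where the differential inequality is available, but $\{t:\mathcal L^d(\Gamma_t^\gamma)=0\}$ is only a set of full measure, not open.

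The paper resolves both issues with one device: it replaces $\omega(t/2,\cdot)$ by $\omega(\alpha_N(t),\cdot)$, where $\alpha_N$ advances only on finitely many of the open intervals $\{t:\mathcal L^d(\Gamma_t^\gamma\cap B_R)<\varepsilon\}$ (this set is open by upper semicontinuity of the measure, Proposition~\ref{p:semicontinuity}) and is constant elsewhere. On the constant intervals the modulus is fixed and no crossing can occur by comparison; on the good intervals the measure of $\Gamma_t^\gamma$ is \emph{small} (not zero), and the extra $\1_{\Gamma_t^\gamma}$ term in the curvature integral is bounded explicitly by $\varepsilon/\epsilon(t_0)^{d+s}$ plus a tail, which the factor-of-two slowdown absorbs. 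The touching argument itself is carried out via a test function $\Phi$ encoding the modulus (Lemma~\ref{l:phimodulus}) and the doubled viscosity inequality of Lemma~\ref{l:viscositydouble}, rather than by directly differentiating $\overline u^\gamma-\underline u^\gamma$. Your sketch omits the $\alpha_N$ reparametrization, the quantitative $\Gamma_t^\gamma$ bound \eqref{e:boundaryintbound}, and the viscosity doubling framework; without these, the passage from ``good times are dense'' to ``the inequality holds at $t_0$'' does not close.
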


\begin{remark}
For simplicity we prove that $E_t^{\gamma\pm}$ have modulus $\omega(\frac{t}{2}, \cdot)$, but similar arguments can be made to show that $E_t^{\gamma\pm}$ has modulus $\omega((1-\epsilon)t, \cdot)$ for any $\epsilon>0$, and hence modulus $\omega(t, \cdot)$ by continuity. 
\end{remark}

Note that by continuity, it suffices to prove that $\Gamma_t^\gamma$ has modulus $\omega(\frac{t}{2},\cdot)$ at times $t\leq t_0$, for some arbitrary $t_0< 2T$.  The key advantage to proving this is that 
\begin{equation}
\inf\left\{\omega\left(\frac{t}{2},r\right): r\geq 0, 0\leq t\leq t_0\right\} = \omega\left(\frac{t_0}{2}, 0\right)\geq \delta\left(\frac{t_0}{2}\right)>0.
\end{equation}

The $\omega$ bound from below and Proposition \ref{p:limitlemma} then allows us to rule out any crossing points at infinity.  

\begin{proposition}\label{p:limitlemma}
Suppose that our initial data $U_0: \R^d\to [-1,1]$ is as in \eqref{e:U0} for some tuple $(E_0^-, \Gamma_0, E_0^+)$ satisfying \eqref{e:initialassumptions}
Then for any $\delta>0$ and $t_0<\infty$, there exists an $r=r(M,\delta, t_0)<\infty$ such that 
\begin{equation}
|U(t,x,x_d)-U_0(x,x_d)|<\delta/2, \qquad |x|\geq r(M,\delta,t_0), \ t\in [0,t_0].
\end{equation}
\end{proposition}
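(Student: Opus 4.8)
The plan is to show that far from the $x_{d}$-axis, $U(t,\cdot)$ stays close to the flat profile $g(x_{d}):=\max(-1,\min(1,x_{d}))$ by trapping each sub- and superlevel set of $U$ between explicit graphical barriers of the flow, exploiting that the fractional mean curvature operator has order $1+s$, so a bump spread over a scale $r$ moves with normal speed $O(r^{-(1+s)})$.

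\emph{Step 1 (reduction to level sets).} By assumption 3). of \eqref{e:initialassumptions} and the definition \eqref{e:U0} of $U_{0}$ as a truncated signed distance function, the non-flat part of $\Gamma_{0}$ lies in $B_{M}^{d-1}\times\R$; since $E_{0}^{-}=\{U_{0}<0\}$ has modulus $(1-\eta)+Lr$ in the sense of Definition \ref{d:setmodulusdefn}, the ``bump'' part of $\Gamma_{0}$ is contained in $B_{M}^{d-1}\times[-C_{0},C_{0}]$ for some $C_{0}=C_{0}(M,L,\eta)$. Consequently the nearest point of $\Gamma_{0}$ to $(x,x_{d})$ is $(x,0)$ once $|x|\ge M_{1}:=M+2$, so $U_{0}(x,x_{d})=g(x_{d})$ for $|x|\ge M_{1}$, and more generally $g(x_{d}-C_{1})\le U_{0}(x,x_{d})\le g(x_{d}+C_{1})$ for all $(x,x_{d})$, with $C_{1}=C_{1}(M,L,\eta)$. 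Combining these with the stability of moduli under Minkowski dilation and erosion by balls (if $E-(z,z_{d})\subseteq E$ then $(E\pm B_{\rho})-(z,z_{d})=(E-(z,z_{d}))\pm B_{\rho}\subseteq E\pm B_{\rho}$), and using the notation of \eqref{e:gammatupledefn}, we get for every $\gamma\in[-1,1]$
\[ E_{0}^{\gamma-}\subseteq\{x_{d}<\gamma+C_{1}\},\qquad E_{0}^{\gamma+}\subseteq\{x_{d}>\gamma-C_{1}\}, \]
with equality to the respective half-space when intersected with $\{|x|\ge M_{1}\}$. It therefore suffices to produce, for each $\delta>0$ and $t_{0}<\infty$, a radius $r=r(M,L,\eta,\delta,t_{0})<\infty$ such that for all $\gamma\in[-1,1]$ and $t\in[0,t_{0}]$,
\[ E_{t}^{\gamma-}\cap\{|x|\ge 2r\}\subseteq\{x_{d}<\gamma+\tfrac{\delta}{2}\},\qquad E_{t}^{\gamma+}\cap\{|x|\ge 2r\}\subseteq\{x_{d}>\gamma-\tfrac{\delta}{2}\}; \]
indeed, applying the first inclusion with $\gamma=g(x_{d})-\tfrac{\delta}{2}$ and the second with $\gamma=g(x_{d})+\tfrac{\delta}{2}$, and recalling $g(x_{d})=x_{d}$ for $|x_{d}|\le1$, gives $g(x_{d})-\tfrac{\delta}{2}\le U(t,x,x_{d})\le g(x_{d})+\tfrac{\delta}{2}$ for $|x|\ge 2r$, while the ranges $|x_{d}|\ge1$ follow from $|U|\le1$ together with $\gamma=\pm(1-\tfrac{\delta}{2})$ and the monotonicity of $\gamma\mapsto\{U(t,\cdot)<\gamma\}$.

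\emph{Step 2 (graphical barriers).} Fix once and for all $\psi\in C_{c}^{\infty}([0,\infty))$ with $0\le\psi\le1$, $\psi\equiv1$ on $[0,1]$, $\psi\equiv0$ on $[2,\infty)$. For $r\ge M_{1}$ set $\beta_{r}(x):=C_{1}\,\psi(|x|/r)$ and
\[ K_{r}:=\sup_{x\in\R^{d-1}}\left|\,s(1-s)\sqrt{1+|\nabla\beta_{r}(x)|^{2}}\;\mathrm{P.V.}\!\!\int_{\R^{d-1}}\frac{\beta_{r}(x+z)-\beta_{r}(x)}{|z|^{d+s}}\,\Lambda\!\left(\frac{\beta_{r}(x+z)-\beta_{r}(x)}{|z|}\right)dz\,\right|, \]
the $L^{\infty}$-norm of the right-hand side of \eqref{e:meancurvfloweqngraph} applied to $\beta_{r}$. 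Since $\beta_{r}(x)=C_{1}\psi(|x|/r)$ is a fixed smooth compactly supported profile dilated by $r$, the order-$(1+s)$ scaling of that operator (the principal value of an order-$(1+s)$ operator applied to a $C^{1,1}$ function is finite and controlled by its $C^{1,1}$ norm, with $\Lambda$ even and bounded by $2$) gives $K_{r}\le C(d,s)\,C_{1}\,r^{-(1+s)}$. For $\gamma\in[-1,1]$ consider the smooth uniformly Lipschitz subgraph $A_{t}^{\gamma}:=\{(x,x_{d}):x_{d}<\gamma+\beta_{r}(x)+K_{r}t\}$. Its boundary is a graph moving upward with normal speed $K_{r}/\sqrt{1+|\nabla\beta_{r}|^{2}}$, whereas fractional mean curvature flow would move it upward with normal speed at most $K_{r}/\sqrt{1+|\nabla\beta_{r}|^{2}}$ by the definition of $K_{r}$; hence $A_{t}^{\gamma}$ is a classical supersolution of \eqref{e:meancurvfloweqnset}. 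Because $\beta_{r}\ge0$ and $\beta_{r}\equiv C_{1}$ on $B_{M_{1}}^{d-1}$, Step 1 gives $E_{0}^{\gamma-}\subseteq A_{0}^{\gamma}$, so the comparison principle (Proposition \ref{p:comparisonprinciple}, applied as in the proof of Proposition \ref{p:propagation}) yields $E_{t}^{\gamma-}\subseteq A_{t}^{\gamma}$ for all $t\ge0$. Evaluating at $|x|\ge 2r$, where $\beta_{r}(x)=0$, gives $E_{t}^{\gamma-}\cap\{|x|\ge2r\}\subseteq\{x_{d}<\gamma+K_{r}t\}\subseteq\{x_{d}<\gamma+K_{r}t_{0}\}$, which is $\subseteq\{x_{d}<\gamma+\tfrac{\delta}{2}\}$ as soon as $r\ge\max\{M_{1},(2C(d,s)C_{1}t_{0}/\delta)^{1/(1+s)}\}$. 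The superlevel inclusion is obtained identically after the reflection $x_{d}\mapsto-x_{d}$, comparing $E_{t}^{\gamma+}$ with the supergraph $\{x_{d}>\gamma-\beta_{r}(x)-K_{r}t\}$. Since $C_{1},M_{1},K_{r}$ do not depend on $\gamma$, the same $r$ works for all $\gamma\in[-1,1]$; this $r$ is the one required in Step 1, and the proof is complete.

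\emph{The main obstacle} is the scaling estimate $K_{r}\le C(d,s)C_{1}r^{-(1+s)}$: one must check that the principal value in \eqref{e:meancurvfloweqngraph} evaluated on the dilated bump $C_{1}\psi(|\cdot|/r)$ is bounded uniformly in the base point — carrying out the near-singularity cancellation (using that $\Lambda$ is even together with a $C^{1,1}$ Taylor expansion) and controlling the mild nonlinearity $\Lambda$ — and then reading off the factor $r^{-(1+s)}$ from the dilation. The only other point requiring care is the precise invocation of Proposition \ref{p:comparisonprinciple} between a single level set of the level-set solution and a classical graphical supersolution over an unbounded domain, which is the same mechanism already used in the proof of Proposition \ref{p:propagation}; everything else reduces to bookkeeping with the uniform geometric bounds coming from the modulus $(1-\eta)+Lr$ and from the structure of the truncated signed distance function.
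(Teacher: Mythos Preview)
Your argument is correct and rests on the same barrier idea as the paper---a smooth bump of fixed height spread over horizontal scale $r$, together with the $r^{-(1+s)}$ scaling of the fractional curvature---but the packaging is different. The paper works directly at the level of the scalar function $U$: it sets
\[
V_r(t,x,x_d)=\frac{\delta}{2t_0}\,t+2\phi\!\left(\frac{|x|}{r}\right)+g(x_d),
\]
checks that for $r$ large (so that the curvature of every level set of $V_r$ is $\le\frac{\delta}{2t_0}$) this is a viscosity supersolution of the level set equation, and applies Theorem~\ref{t:comparison} once to get $U\le V_r$; the lower bound is symmetric. You instead slice into the sets $E_t^{\gamma\pm}$ and build, for each $\gamma$, a moving graphical supersolution $A_t^\gamma=\{x_d<\gamma+\beta_r(x)+K_r t\}$, then invoke the geometric comparison level by level. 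What the paper's route buys is brevity (a single comparison instead of a $\gamma$-indexed family) and it sidesteps the point you correctly flag at the end---comparing a level set of the viscosity solution against a classical graphical supersolution over $\R^d$---since Theorem~\ref{t:comparison} applies directly to $U$ and $V_r$. What your route buys is a slightly more transparent geometric picture and an explicit uniform-in-$\gamma$ statement, at the cost of more bookkeeping in Step~1 and the need to justify the set-level comparison via Theorem~\ref{t:subsuper}.
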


\begin{proof}

Let $\phi\in C^{\infty}_c(\R)$ with $\phi\geq 0$, $\phi\equiv 1$ on $[-1/2,1/2]$, and $\text{supp}(\phi)\subseteq [-1,1]$.  Let 
\begin{equation}
V_r(t,x,x_d) = \frac{\delta }{2t_0}t+2\phi\left(\frac{|x|}{r}\right) + \min\{(x_d+1)_+-1 ,1\}
\end{equation}
Then for $r\geq 2M$, we have that $V_r(0,x,z) \geq U_0(x,z)$.  Since $\phi\in C^\infty_c(\R)$, we have that if we take $r$ sufficiently large depending on $\displaystyle\frac{\delta}{t_0}$, then $V_r$ will be a supersolution to \eqref{e:viscositysuper}.  Thus by the comparison principle Theorem \ref{t:comparison}, for any $t\in [0,t_0]$ and $|x|\geq r$ and $x_d\in\R$
\begin{equation}
U(t,x,x_d)\leq V_r(t,x,x_d) = \frac{\delta }{2t_0}t +\min\{(x_d+1)_+-1 ,1\}  \leq  \frac{\delta}{2}+\min\{(x_d+1)_+-1 ,1\} = \frac{\delta}{2}+U_0(x,x_d) .
\end{equation}
A symmetric proof works for the opposite inequality.  
\end{proof}

The fact that we are restricting ourselves to times $0\leq t\leq t_0 < 2T$ and Proposition \ref{p:limitlemma} effectively allows us to redo the breakthrough argument of section 2.  However, when we try to redo the estimates from section 3, we run into a problem because our ``boundary" $\Gamma_t^\gamma$ might have positive measure.  

Again, if we knew in fact that $\mathcal{L}^d(\Gamma_t^\gamma) = 0$ for all times $t$, then the argument from the smooth case would work with minor alterations.  The main problem when we only have $\mathcal{L}^d(\Gamma_t^\gamma) = 0$ for a.e. time $t$ is that at the breakthrough argument relies on having an open interval of times where we can run it.  Else at the crossing time $t_0$ we have no guarantee that $\mathcal{L}^d(\Gamma_{t_0}^\gamma) = 0$.  

Thus in order to deal with this, we're going need to adjust the modulus estimates in Section 3 to work when we only have that $\mathcal{L}^d(\Gamma_t^\gamma)$ is \emph{small}.  Luckily, that will be true on an open interval of times.  

\begin{proposition}\label{p:semicontinuity}
Let $\gamma\in \R$ and $R>0$.  Then the function 
\begin{equation}
t\to \mathcal{L}^d (\Gamma_t^\gamma\cap B_R^d) 
\end{equation}
is upper semicontinuous.  In particular, for any $\epsilon>0$ the set of times 
\begin{equation}
\{t\in (0,T):  \mathcal{L}^d (\Gamma_t^\gamma\cap B_R^d) <\epsilon\}
\end{equation}
is open.  
\end{proposition}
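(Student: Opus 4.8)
The plan is to reduce the statement to a pointwise $\limsup$ inequality for characteristic functions together with the reverse Fatou lemma. The only external input is that the viscosity solution $U$ of the level set equation \eqref{e:viscosity} is \emph{jointly continuous} on $[0,\infty)\times\R^d$, which is part of the standard well-posedness theory for the level set flow (see the appendix and \cite{CyrilFract,Chambolle}) and applies here because the initial datum $U_0$ from \eqref{e:U0} is Lipschitz, hence bounded and uniformly continuous.

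From joint continuity I would extract the only property of the level sets actually used: if $t_n\to t$ and a point $X$ belongs to $\Gamma_{t_n}^\gamma$ for infinitely many $n$, then $U(t_n,X)=\gamma$ along that subsequence, so $U(t,X)=\lim_n U(t_n,X)=\gamma$ and hence $X\in\Gamma_t^\gamma$. Since $\limsup_n\1_{\Gamma_{t_n}^\gamma}(X)$ equals $1$ exactly when $X$ lies in infinitely many of the sets $\Gamma_{t_n}^\gamma$, this is precisely the statement that
\begin{equation}
\limsup_{n\to\infty}\1_{\Gamma_{t_n}^\gamma}(X)\le\1_{\Gamma_t^\gamma}(X)\qquad\text{for every }X\in\R^d.
\end{equation}

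To prove that $g(t):=\mathcal{L}^d(\Gamma_t^\gamma\cap B_R^d)$ is upper semicontinuous at an arbitrary $t$, I would fix any sequence $t_n\to t$, multiply the inequality above by the integrable function $\1_{\overline{B_R^d}}$, and apply the reverse Fatou lemma (the integrands being dominated by $\1_{\overline{B_R^d}}$):
\begin{equation}
\limsup_{n\to\infty}\mathcal{L}^d\!\left(\Gamma_{t_n}^\gamma\cap\overline{B_R^d}\right)\le\int_{\overline{B_R^d}}\limsup_{n\to\infty}\1_{\Gamma_{t_n}^\gamma}\dd X\le\mathcal{L}^d\!\left(\Gamma_t^\gamma\cap\overline{B_R^d}\right).
\end{equation}
Since $\partial B_R^d$ is $\mathcal{L}^d$-null, open and closed balls may be exchanged everywhere, giving $\limsup_n g(t_n)\le g(t)$; as the sequence was arbitrary, $g$ is upper semicontinuous. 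The final assertion follows because an upper semicontinuous function has open strict sublevel sets, so $\{t\in(0,T):g(t)<\epsilon\}=g^{-1}((-\infty,\epsilon))\cap(0,T)$ is open.

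The only delicate point is the input itself — the joint continuity of $U$ and the resulting upper semicontinuity of the set-valued map $t\mapsto\Gamma_t^\gamma$ — and I expect that to be essentially the only obstacle; once it is available the argument uses no regularity or lower bound on $\Gamma_t^\gamma$, only that it is closed and varies upper-semicontinuously in time, so the remaining measure-theoretic step is routine.
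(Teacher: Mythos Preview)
Your proof is correct and takes a somewhat different route from the paper's. The paper argues geometrically: it fixes an $r$-neighborhood $N_r(\Gamma_t^\gamma)$ with $\mathcal{L}^d(N_r(\Gamma_t^\gamma)\cap B_R^d)<\mathcal{L}^d(\Gamma_t^\gamma\cap B_R^d)+\epsilon$, notes that the compact set $K=\{t\}\times(\overline{B_R^d}\setminus N_r(\Gamma_t^\gamma))$ is disjoint from the closed set $U^{-1}(\gamma)$, and uses the resulting positive distance to conclude that $\Gamma_{t'}^\gamma\cap\overline{B_R^d}\subseteq N_r(\Gamma_t^\gamma)$ for $t'$ close to $t$. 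Your argument bypasses the neighborhood construction entirely by working pointwise: continuity of $U$ gives $\limsup_n\1_{\Gamma_{t_n}^\gamma}\le\1_{\Gamma_t^\gamma}$, and reverse Fatou with the dominating function $\1_{\overline{B_R^d}}$ finishes. Your approach is a bit more direct and purely measure-theoretic; the paper's approach has the minor side benefit of actually proving the stronger Hausdorff-type set inclusion $\Gamma_{t'}^\gamma\cap\overline{B_R^d}\subseteq N_r(\Gamma_t^\gamma)$ along the way, though that extra information is not used elsewhere. Both rely on exactly the same input---joint continuity of $U$---so there is no real difference in strength of hypotheses.
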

\begin{proof}
Let $\epsilon>0$ and $t\in [0,\infty)$.  Then it suffices to show that there is some $\delta>0$ such that if $|t-t'|<\delta$ then 
\begin{equation}
 \mathcal{L}^d (\Gamma_{t'}^\gamma\cap B_R^d) <  \mathcal{L}^d (\Gamma_t^\gamma\cap B_R^d) +\epsilon.
\end{equation}
Let $N_r(\Gamma_t^\gamma) = \{X\in\R^d: d(X,\Gamma_t^\gamma)< r\}.$  Then for $r$ sufficiently small, 
\begin{equation}
 \mathcal{L}^d (N_r(\Gamma_{t}^\gamma)\cap B_R^d) <  \mathcal{L}^d (\Gamma_t^\gamma\cap B_R^d) +\epsilon.
\end{equation}
Let $K = \{t\}\times (\overline{B_R^d}\setminus N_r(\Gamma_t^\gamma))$.  Then $K$ is compact with $K\cap U^{-1}(\gamma) = \emptyset$.  Hence, $d(K, U^{-1}(\gamma)) = \delta>0$ for some $\delta$.  In particular, $|t-t'|<\delta$ implies that 
$ (\overline{B_R^d}\setminus N_r(\Gamma_t^\gamma)) \cap \Gamma_{t'}^\gamma = \emptyset$.  Thus 
\begin{equation}
 \mathcal{L}^d (\Gamma_{t'}^\gamma\cap B_R^d)  \leq  \mathcal{L}^d (N_r(\Gamma_{t}^\gamma)\cap B_R^d)<  \mathcal{L}^d (\Gamma_t^\gamma\cap B_R^d) +\epsilon.
\end{equation}
\end{proof}

Now with Proposition \ref{p:semicontinuity}, we first make our choice of $R= R(t_0)$ as 
\begin{equation}
R(t_0) = r\left(M,\delta\left(\frac{t_0}{2}\right), t_0\right)+3+LM+ \left(\frac{1}{8(2+L)(1-s)T}\right)^{-1/s}.
\end{equation}
where $r\left(\displaystyle M,\delta\left(\frac{t_0}{2}\right), t_0\right)$ is as in Proposition \ref{p:limitlemma}.  
With this choice of $R(t_0)$, by Proposition \ref{p:semicontinuity}  we have the set of times 
\begin{equation}
\mathcal{T}(t_0) = \left\{t\in (0,t_0):  \mathcal{L}^d (\Gamma_t^\gamma\cap B_{R(t_0)}^d) < \frac{\epsilon(t_0)^{d+s}}{8(2+L)s(1-s)T}\right\} = \bigcup\limits_{i}(a_i,b_i),
\end{equation}
is an open set of full measure on $(0,t_0)$, where 
\begin{equation}
\epsilon(t_0) = \left(\frac{c\delta(\frac{t_0}{2})^2}{8(2+L)sT}\right)^{\frac{1}{(1-s)}}.
\end{equation}

The breakthrough argument of section 2 is designed to work on an open interval of times.  However, a finite union of open intervals works just as well.  For $N\in \N$, define $\alpha_{N}:[0,t_0]\to [0,t_0]$ by $\alpha_{N}(0)=0$ and 
\begin{equation}
\alpha_{N}'(t) = \left\{\begin{array}{ll} \frac{1}{2}, & t\in (a_i, b_i) \ \text{ for } i=1,\ldots ,N,  \\ 0, & \text{otherwise} \end{array}\right. .
\end{equation}
Note that $\displaystyle\lim\limits_{N\to \infty} \alpha_N(t) = \frac{t}{2}$ for $t\in [0,t_0]$ as $\mathcal{T}(t_0)$ has full measure.  Thus it would suffice to show that $\Gamma_t^\gamma$ has modulus $\omega(\alpha_N(t),\cdot)$ for every $t\in [0,t_0]$ to prove Lemma \ref{l:assumptionscase}.

With all of this set up, we can now define our test function $$\Phi: [0,t_0]\times \R^{d-1}\times \R\times \R^{d-1}\times \R\to \R$$ by
\begin{equation}\label{e:phidefn}
\Phi(t,x,x_d, y, y_d) = \left(\omega(\alpha_N(t), |x-y|) -(x_d-y_d)+\frac{1}{2}\right)_+ +\frac{1}{2} .
\end{equation}
The function $\Phi$ encodes that $\Gamma_t^\gamma$ has the modulus $\omega(\alpha_N(t),\cdot)$, in the sense that

\begin{lemma}\label{l:phimodulus}
$\Gamma_t^\gamma$ has modulus $\omega(\alpha_N(t), \cdot)$ if and only if for all $X,Y\in \R^d$,
\begin{equation}
\1_{E_t^{\gamma-}\cup\Gamma_t^\gamma}(X) - \1_{E_t^{\gamma-}}(Y)\leq \Phi(t,X,Y).
\end{equation}
Similarly, $\Gamma_t^\gamma$ only has the modulus strictly if the inequality above is strict.  
\end{lemma}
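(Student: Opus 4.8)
The plan is to unwind the definition of $\Phi$ and to notice that the only pairs $X,Y$ for which the asserted inequality can fail are those where its left side equals $1$. Write $X=(x,x_d)$, $Y=(y,y_d)$ and abbreviate $\omega_t=\omega(\alpha_N(t),|x-y|)$. A direct inspection of \eqref{e:phidefn} shows $\Phi(t,X,Y)\in[\tfrac12,\infty)$, with $\Phi(t,X,Y)\geq 1$ exactly when $x_d-y_d\leq\omega_t$ and $\Phi(t,X,Y)>1$ exactly when $x_d-y_d<\omega_t$. Since $E_t^{\gamma-}\subseteq E_t^{\gamma-}\cup\Gamma_t^\gamma$, the quantity $\1_{E_t^{\gamma-}\cup\Gamma_t^\gamma}(X)-\1_{E_t^{\gamma-}}(Y)$ takes values in $\{-1,0,1\}$, and it equals $1$ precisely when $X\in E_t^{\gamma-}\cup\Gamma_t^\gamma=\{U(t,\cdot)\leq\gamma\}$ and $Y\notin E_t^{\gamma-}$, i.e. $U(t,Y)\geq\gamma$. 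Because $\Phi\geq\tfrac12>0$ everywhere, the inequality of the lemma holds for all $X,Y$ if and only if it holds whenever the left side equals $1$, that is, if and only if
\[
U(t,X)\leq\gamma,\quad U(t,Y)\geq\gamma\quad\Longrightarrow\quad x_d-y_d\leq\omega(\alpha_N(t),|x-y|),
\]
and the strict version of the lemma is equivalent, by the same computation, to the same implication with $<$ in the conclusion. So it remains to identify this implication with the statement that $\Gamma_t^\gamma$ has modulus $\omega(\alpha_N(t),\cdot)$, i.e. $\overline{u}^\gamma(t,x)-\underline{u}^\gamma(t,y)\leq\omega(\alpha_N(t),|x-y|)$ for all $x,y$.

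One direction is immediate: given the displayed implication, apply it to $X=(x,\overline{u}^\gamma(t,x))\in\Gamma_t^\gamma$ and $Y=(y,\underline{u}^\gamma(t,y))\in\Gamma_t^\gamma$, for which $U(t,X)=U(t,Y)=\gamma$, to conclude $\overline{u}^\gamma(t,x)-\underline{u}^\gamma(t,y)\leq\omega(\alpha_N(t),|x-y|)$ for all $x,y$, and likewise with strict inequalities; that $\overline{u}^\gamma,\underline{u}^\gamma$ are finite and attained is established in the next paragraph.

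For the converse I would use that, by Lemma \ref{l:gammainitialmod} together with the characterization of the modulus $1+Lr$ by Lipschitz sandwiching, there is an $L$-Lipschitz $v:\R^{d-1}\to\R$ with $\{x_d<v(x)-\tfrac12\}\subseteq E_t^{\gamma-}\subseteq\{x_d<v(x)+\tfrac12\}$, equivalently $U(t,x,s)<\gamma$ for $s<v(x)-\tfrac12$ and $U(t,x,s)\geq\gamma$ for $s>v(x)+\tfrac12$. Since $U(t,\cdot)$ is continuous, on every vertical line the set $\{s:U(t,x,s)=\gamma\}$ is nonempty, closed and contained in $[v(x)-\tfrac12,v(x)+\tfrac12]$, so $\overline{u}^\gamma(t,x)$ and $\underline{u}^\gamma(t,y)$ are well defined and finite. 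Now assume $\Gamma_t^\gamma$ has modulus $\omega(\alpha_N(t),\cdot)$ and take $X=(x,x_d)$ with $U(t,X)\leq\gamma$ and $Y=(y,y_d)$ with $U(t,Y)\geq\gamma$. If $U(t,X)=\gamma$ then $x_d\leq\overline{u}^\gamma(t,x)$ trivially; if $U(t,X)<\gamma$ then necessarily $x_d<v(x)+1$ (otherwise $U(t,X)\geq\gamma$), and the intermediate value theorem applied to $U(t,x,\cdot)$ on $[x_d,v(x)+1]$ yields a point $s\geq x_d$ with $U(t,x,s)=\gamma$, so again $x_d\leq\overline{u}^\gamma(t,x)$. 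Symmetrically $y_d\geq\underline{u}^\gamma(t,y)$. Hence $x_d-y_d\leq\overline{u}^\gamma(t,x)-\underline{u}^\gamma(t,y)\leq\omega(\alpha_N(t),|x-y|)$, which is the displayed implication; if $\Gamma_t^\gamma$ has the modulus strictly, the same chain gives the strict conclusion. The only mildly delicate step is this last paragraph, namely upgrading ``$U(t,X)\leq\gamma$'' to ``$x_d\leq\overline{u}^\gamma(t,x)$'', where both the Lipschitz sandwiching of Lemma \ref{l:gammainitialmod} and the continuity of $U$ are used; everything else is bookkeeping with the explicit form of $\Phi$.
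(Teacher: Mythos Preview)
Your proof is correct and follows essentially the same approach as the paper's: both reduce to the observation that the only dangerous case is when the left side equals $1$, and both handle that case by showing $x_d\leq\overline{u}^\gamma(t,x)$ and $y_d\geq\underline{u}^\gamma(t,y)$ whenever $X\in E_t^{\gamma-}\cup\Gamma_t^\gamma$ and $Y\notin E_t^{\gamma-}$. The paper treats this last implication as immediate (phrased as ``monotonicity of $\Phi$ in the $x_d,y_d$ variables''), whereas you spell it out via the Lipschitz sandwiching of Lemma~\ref{l:gammainitialmod} and the intermediate value theorem applied to the continuous function $U(t,x,\cdot)$; your version is the more honest one, since that step does genuinely use continuity of $U$ and the a priori vertical confinement of $\Gamma_t^\gamma$.
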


\begin{proof}
We simply show the first statement, as the second follows similarly.  

One direction is straightforward, as if we take $X = (x,\overline{u}^\gamma(t,x))$ and $Y = (y, \underline{u}^\gamma(t,y))$ then 
\begin{equation}
\begin{array}{c}
1 = \1_{E_t^{\gamma-}\cup\Gamma_t^\gamma}(X) - \1_{E_t^{\gamma-}}(Y)\leq \Phi(t,X,Y) = \omega(\alpha_{N}(t),|x-y|) -(\overline{u}^\gamma(t,x) - \underline{u}^\gamma(t,y))+1,
\\ \Rightarrow \overline{u}^\gamma(t,x)-\underline{u}^\gamma(t,y)\leq \omega(\alpha_N(t),|x-y|).
\end{array}
\end{equation}
As for the converse, suppose that $\overline{u}^\gamma(t,x)-\underline{u}^\gamma(t,y)\leq \omega(\alpha_N(t),|x-y|)$ for all $x,y\in \R^{d-1}$.  
Then since indicator functions can only take the values 0 or 1 and $\Phi>0$, it follows immediately that if $\1_{E_t^{\gamma-}\cup\Gamma_t^\gamma}(X) - \1_{E_t^{\gamma-}}(Y) \not = 1$ then 
\begin{equation}
\1_{E_t^{\gamma-}\cup\Gamma_t^\gamma}(X) - \1_{E_t^{\gamma-}}(Y) \leq 0< \Phi(t,X,Y).
\end{equation}
In the case that $\1_{E_t^{\gamma-}\cup\Gamma_t^\gamma}(X) - \1_{E_t^{\gamma-}}(Y)=1$, we have that $X\in E_t^{\gamma-}\cup\Gamma_t^\gamma$ and $Y\not\in E_t^{\gamma-}$.  Using the monotonicity of $\Phi$ in the $x_d,y_d$ variables we have that
\begin{equation}
\begin{split}
 \1_{E_t^{\gamma-}\cup\Gamma_t^\gamma}(X) - \1_{E_t^{\gamma-}}(Y)=  1&\leq1+ \omega(\alpha_N(t),|x-y|)- (\overline{u}^\gamma(t,x)-\underline{u}^\gamma(t,y))
 \\&\leq  \omega(\alpha_N(t), |x-y|) -(x_d-y_d)+1 = \Phi(t,X,Y).
 \end{split}
\end{equation}
\end{proof}

Thus in order to prove Lemma \ref{l:assumptionscase} it suffices to show that 
\begin{equation}
\1_{E_t^{\gamma-}\cup\Gamma_t^\gamma}(X) - \1_{E_t^{\gamma-}}(Y)\leq \Phi(t,X,Y), \quad \text{ for all } (t,X,Y)\in [0,t_0]\times \R^d\times \R^d.
\end{equation}

With the help of our assumptions \eqref{e:initialassumptions}, Proposition \ref{p:limitlemma} and the definition \eqref{e:omegadefn} of $\omega$, we can now formally justify a large portion of the breakthrough argument by showing that
\begin{lemma}\label{l:phibounds}
Let $\Phi$ be as in \eqref{e:phidefn}.  Then for $|\gamma|<\displaystyle\frac{\eta}{\sqrt{1+L^2}}$, 
\begin{equation}
\left\{\begin{array}{ll} \Phi(0,X,Y)> \1_{E_0^{\gamma-}\cup\Gamma_0^\gamma}(X) - \1_{E_0^{\gamma-}}(Y), & X,Y\in \R^d,
\\\Phi(t,x,x_d,y,y_d)>\1_{E_t^{\gamma-}\cup\Gamma_t^\gamma}(x,x_d) - \1_{E_t^{\gamma-}}(y,y_d), & t\in [0,t_0], x,y\in \R^{d-1}, |x_d| \text{ or }|y_d| >1+LM,
\\ \Phi(t,x,x_d,y,y_d)>\1_{E_t^{\gamma-}\cup\Gamma_t^\gamma}(x,x_d) - \1_{E_t^{\gamma-}}(y,y_d), & t\in [0,t_0], |x| \text{ or } |y| > r(M,\delta(t_0/2), t_0)+2.
\end{array}\right. 
\end{equation}
\end{lemma}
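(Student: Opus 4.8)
The plan is to verify the three inequalities of Lemma \ref{l:phibounds} separately, in each case exploiting that $\Phi$ is bounded below by $1/2$, so the only nontrivial case to exclude is when the left-hand side equals $1$, i.e.\ when $X\in E_t^{\gamma-}\cup\Gamma_t^\gamma$ and $Y\notin E_t^{\gamma-}$. Writing $X=(x,x_d)$, $Y=(y,y_d)$, the statement $\1_{E_t^{\gamma-}\cup\Gamma_t^\gamma}(X)-\1_{E_t^{\gamma-}}(Y)=1$ forces $x_d\le\overline u^\gamma(t,x)$ and $y_d\ge\underline u^\gamma(t,y)$, hence (by monotonicity of $\Phi$ in $x_d,y_d$) it suffices in every case to beat the quantity $1+\omega(\alpha_N(t),|x-y|)-(\overline u^\gamma(t,x)-\underline u^\gamma(t,y))$, i.e.\ to produce a strict inequality $\overline u^\gamma(t,x)-\underline u^\gamma(t,y)<\omega(\alpha_N(t),|x-y|)$.

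For the first line (time $t=0$): by Lemma \ref{l:gammainitialmod}, $E_0^{\gamma\pm}$ has modulus $1+Lr$ for $|\gamma|<\eta/\sqrt{1+L^2}$, so $\overline u^\gamma(0,x)-\underline u^\gamma(0,y)\le 1+L|x-y|$. Since $\alpha_N(0)=0$ and $\delta(0)=1$, the defining formula \eqref{e:omegadefn} gives $\omega(0,r)\ge\delta(0)+1+Lr-\tfrac{r^{1+s}}{2^{1+s}}$ on $r\le 2$ and $\omega(0,r)=1+1+Lr>1+Lr$ on $r\ge 2$; on $r\le 2$ one checks $r-\tfrac{r^{1+s}}{2^{1+s}}\ge 0$, so $\omega(0,r)\ge 1+(1+Lr)>1+Lr$, giving the strict gap $\omega(0,|x-y|)>1+L|x-y|\ge\overline u^\gamma(0,x)-\underline u^\gamma(0,y)$, and hence strict inequality in $\Phi$. (This is exactly assumption 1 of \eqref{e:omegaassumptions}/\eqref{e:omegabound} being strict at $t=0$, baked into the extra ``$+\delta(t)$'' term.)

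For the second line (large vertical coordinate): if $|x_d|>1+LM$ or $|y_d|>1+LM$, I would use that $\Gamma_0^\gamma$ is contained in $\{|x_d|\le 1+LM\}$ — this follows from assumption 3 of \eqref{e:initialassumptions} (outside $B_M^{d-1}\times\R$, $\Gamma_0$ lies on $\{x_d=0\}$) together with the Lipschitz modulus $1+Lr$ forcing $|\underline u^\gamma(0,\cdot)|,|\overline u^\gamma(0,\cdot)|\le 1+LM$ — combined with Proposition \ref{p:limitlemma} at the level of sets (or rather with the fact that $\Gamma_t^\gamma$ stays within a bounded vertical strip of width controlled by the modulus $1+Lr$ from Proposition \ref{p:propagation} and the vertical confinement coming from comparison with translated flat solutions). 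Then one vertical coordinate being outside $\{|x_d|\le 1+LM\}$ makes the corresponding indicator evaluate trivially: if $x_d>1+LM\ge\overline u^\gamma(t,x)$ then $X\notin\overline{E_t^{\gamma-}\cup\Gamma_t^\gamma}$ in the relevant sense so the left side is $\le 0<\Phi$; symmetrically for $y_d<-(1+LM)\le\underline u^\gamma(t,y)$ forcing $Y\in E_t^{\gamma-}$. For the third line (large horizontal coordinate), I would invoke Proposition \ref{p:limitlemma}: for $|x|>r(M,\delta(t_0/2),t_0)+2$ we get $|U(t,x,x_d)-U_0(x,x_d)|<\delta(t_0/2)/2$ for $t\in[0,t_0]$, and since $U_0$ is flat (equal to the truncated signed distance to $\{x_d=0\}$) out there, the level set $\Gamma_t^\gamma$ near such $x$ is pinched into the strip $|x_d|<\delta(t_0/2)/2+|\gamma|<\delta(t_0/2)$; combining with $\inf\{\omega(\alpha_N(t),r):r\ge0,t\le t_0\}=\omega(\alpha_N(t_0),0)\ge\delta(t_0/2)$ (here using $\alpha_N(t_0)\le t_0/2$ and monotonicity of $\delta$), we get either $|x-y|>2$, handled by the $r\ge 2$ branch of $\omega$ as in line one, or both $|x|,|y|$ large, giving $\overline u^\gamma(t,x)-\underline u^\gamma(t,y)<\delta(t_0/2)\le\omega(\alpha_N(t),|x-y|)$, again strict.

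\textbf{Main obstacle.} The delicate point is the second line: making rigorous that $\Gamma_t^\gamma$ remains confined to the vertical strip $\{|x_d|\le 1+LM\}$ for \emph{all} $t\in[0,t_0]$, not just $t=0$. The modulus $1+Lr$ (Proposition \ref{p:propagation}) controls the \emph{oscillation} of $\Gamma_t^\gamma$ but not its absolute vertical position; one must additionally use translation-invariance plus the comparison principle against the stationary flat subgraphs $\{x_d<\pm(1+LM)+L\,\mathrm{dist}(\cdot,0)\}$-type barriers, or more simply bound $|U(t,\cdot)|$ above and below by comparison with spatially-flat supersolutions/subsolutions (as in the proof of Proposition \ref{p:limitlemma} but without the $|x|$-cutoff). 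The bookkeeping with $\gamma\ne 0$ — a $|\gamma|$-thickening everywhere — is routine given $|\gamma|<\eta/\sqrt{1+L^2}$, but needs to be tracked consistently through all three cases.
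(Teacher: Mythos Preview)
Your overall approach matches the paper's, and lines one and three are essentially correct. Two points deserve correction.

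For line two, your case analysis is incomplete. You treat $x_d>1+LM$ (forcing $X\in E_t^{\gamma+}$, hence difference $\le 0$) and symmetrically $y_d<-(1+LM)$, but you omit $x_d<-(1+LM)$ and $y_d>1+LM$. In those cases the difference can still equal $1$ (e.g.\ $X$ deep inside $E_t^{\gamma-}$, $Y\in\Gamma_t^\gamma$), and your opening reduction to ``show strict modulus at $(x,y)$'' yields nothing, since $x,y$ are unconstrained. The paper's fix is one line: if $x_d<-(1+LM)$ and the difference equals $1$, then $Y\notin E_t^{\gamma-}$ forces $y_d\ge\underline u^\gamma(t,y)\ge -(1+LM)$, so $x_d<y_d$ and $\Phi\ge\omega-(x_d-y_d)+1>1$. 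Relatedly, the ``main obstacle'' you flag---propagating the vertical confinement $\{\pm x_d\ge 1+LM\}\subseteq E_t^{\gamma\pm}$---is not an obstacle at all: half-spaces have zero fractional mean curvature and are stationary, so the inclusion at $t=0$ (which you derive correctly from \eqref{e:initialassumptions}) persists by a single application of the comparison principle. The paper spends one sentence on this.

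In line three, your intermediate claim $\delta(t_0/2)/2+|\gamma|<\delta(t_0/2)$ asserts $|\gamma|<\delta(t_0/2)/2$, which fails as $t_0\to 2T$ since $\delta(t_0/2)\to 0$ while $\gamma$ is fixed. The correct bookkeeping is that $\overline u^\gamma(t,x)\le\gamma+\delta(t_0/2)/2$ and $\underline u^\gamma(t,y)\ge\gamma-\delta(t_0/2)/2$, so $\gamma$ cancels in the difference and $\overline u^\gamma-\underline u^\gamma<\delta(t_0/2)$ follows directly; your conclusion is right, only the intermediate step is misstated.
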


\begin{proof}
When $t=0$, by Lemma \ref{l:gammainitialmod} and the definition \eqref{e:omegadefn} we have that $E_t^{\gamma\pm}$ strictly has the modulus $\omega(0,r)>1+Lr$.  Thus by Lemma \ref{l:phimodulus}
\begin{equation}
\Phi(0,X,Y)>\1_{E_0^{\gamma-}\cup\Gamma_0^\gamma}(X)-\1_{E_t^{\gamma-}}(Y).
\end{equation}

Note that our assumptions \eqref{e:initialassumptions} on $(E_0^-, \Gamma_0, E_0^+)$ imply that 
\begin{equation}
 \{\pm x_d\geq 1-\eta +LM\}\subseteq E_0^{\pm}
\end{equation}
Thus for $|\gamma|<\displaystyle\frac{\eta}{\sqrt{1+L^2}}$, we have that 
\begin{equation}
 \{\pm x_d\geq 1 +LM\}\subseteq E_0^{\gamma\pm}.
\end{equation}
By the comparison principle, this remains true for all later times.  So if $x_d>1+LM$ or $y_d < -1-LM$, then 
\begin{equation}
\Phi(t,x,x_d,y,y_d) > 0 \geq \1_{E_t^{\gamma-}\cup\Gamma_t^\gamma}(x,x_d) - \1_{E_t^{\gamma-}}(y,y_d).
\end{equation}
On the other hand if $x_d<-1-LM$ and $y_d\geq -1-LM$, then $x_d\leq y_d$ and hence
\begin{equation}
\Phi(t,x,x_d,y,y_d) > \omega(\alpha_N(t), |x-y|)+1> \1_{E_t^{\gamma-}\cup\Gamma_t^\gamma}(x,x_d) - \1_{E_t^{\gamma-}}(y,y_d).
\end{equation}
The same of course holds if $y_d>1+LM$ and $x_d\leq 1+LM$.

Finally, suppose that $|x|>  r(M,\delta(t_0/2), t_0)+2$.  Then by Proposition \ref{p:limitlemma}, we have that for any $t\in [0,t_0]$ and $x_d\in \R$ that 
\begin{equation}
|U(t,x,x_d)-U_0(x,x_d)| < \frac{\delta(t_0/2)}{2}.
\end{equation}
In particular, 
\begin{equation}
\1_{E_t^{\gamma-}\cup\Gamma_t^\gamma}(x,x_d)=1 \quad \Rightarrow \quad x_d< \gamma+\frac{\delta(t_0/2)}{2}.
\end{equation}
Let $Y=(y,y_d)\in \R^d$.  Then either $|y|> r(M,\delta(t_0/2), t_0)$ or $|x-y|>2$.  If $|y|>r(M,\delta(t_0/2), t_0)$, then by the same argument $ \1_{E_t^{\gamma-}}(y,y_d) = 0$ implies $y_d >\gamma-\displaystyle\frac{\delta(t_0/2)}{2}$.  Hence, 
\begin{equation}
\1_{E_t^{\gamma-}\cup\Gamma_t^\gamma}(x,x_d) - \1_{E_t^{\gamma-}}(y,y_d)=1 < 1+\delta(t_0/2) -(x_d-y_d) < \Phi(t,x,x_d,y,y_d).
\end{equation}
On the other hand, if $|x-y|>2$, then $\omega(\alpha_N(t), |x-y|)>1+L|x-y|$.  Thus if $x_d-y_d\leq1+L|x-y|$, then by comparison principle and the definition of $\omega$
\begin{equation}
\1_{E_t^{\gamma-}\cup\Gamma_t^\gamma}(x,x_d) - \1_{E_t^{\gamma-}}(y,y_d)\leq 1 <(\omega(\alpha_{N}(t),|x-y|)-(x_d-y_d))+1\leq \Phi(t,x,x_d,y,y_d).
\end{equation}
And if $x_d-y_d>1+L|x-y|$, then by Lemma \ref{l:gammainitialmod}
\begin{equation}
\1_{E_t^{\gamma-}\cup\Gamma_t^\gamma}(x,x_d) - \1_{E_t^{\gamma-}}(y,y_d) = 0 <\Phi(t,x,x_d, y, y_d).
\end{equation}
Thus $|x|>r(M,\delta(t_0/2), t_0)+2$ implies that 
\begin{equation}
\1_{E_t^{\gamma-}\cup\Gamma_t^\gamma}(x,x_d) - \1_{E_t^{\gamma-}}(y,y_d)<\Phi(t,x,x_d,y,y_d).
\end{equation}
A symmetric argument clearly works in the case that $|y|>r(M,\delta(t_0/2), t_0)+2$.  
\end{proof}

Combining Lemmas \ref{l:phimodulus} and \ref{l:phibounds}, we now just have to show that 
\begin{equation}
\begin{split}
t\in [0,t_0], \ |x|,|y|\leq r(M,\delta(t_0/2),t_0), \ |x_d|,|y_d|\leq 1+LM 
\\ \Rightarrow \quad \Phi(t,x,x_d,y,y_d)\geq \1_{E_t^{\gamma-}\cup\Gamma_t^\gamma}(x,x_d) - \1_{E_t^{\gamma-}}(y,y_d).  
\end{split}
\end{equation}
To do this, we need to use our equations.  By Theorem \ref{t:subsuper} that $\1_{E_t^{\gamma-}\cup\Gamma_t^\gamma}(X)$ is a viscosity subsolution to \eqref{e:viscositysub} and that $\1_{E_t^{\gamma-}}(Y)$ is a viscosity supersolution to \eqref{e:viscositysuper}.  By standard viscosity solution arguments, it then follows that 
\begin{lemma}\label{l:viscositydouble}
The function $\1_{E_t^{\gamma-}\cup\Gamma_t^\gamma}(X)-\1_{E_t^{\gamma-}}(Y)$ is a viscosity subsolution to 
\begin{equation}\label{e:viscositydouble}
\begin{split}
\partial_t V(t,X,Y) \leq& -H_s(X, \{V(t, \cdot, Y)\geq V(t,X,Y)\})|\nabla_X V(t,X,Y)| 
\\&+ H_s(Y, \{V(t,X, \cdot)>V(t,X,Y)\})|\nabla_Y V(t,X,Y)|.
\end{split}
\end{equation}
In particular, if $\Psi$ crosses $\1_{E_t^{\gamma-}\cup\Gamma_t^\gamma}-\1_{E_t^{\gamma-}}$ at $(t,X,Y)$ with $\nabla_X\Psi,\nabla_Y\Psi\not = 0$, then for any $\epsilon>0$
\begin{equation}
\begin{split}
\partial_t \Psi(t,X,Y) \leq & s(1-s)|\nabla_X\Psi|\left(\int\limits_{|Z|>\epsilon} \frac{\1^\pm_{E_t^{\gamma -}\cup\Gamma_t^\gamma}(X+Z)}{|Z|^{d+s}}dZ  + P.V.\int\limits_{|Z|<\epsilon} \frac{\1^\pm_{\{\Psi(t,\cdot,Y)\geq \Psi(t,X,Y)\}}(X+Z)}{|Z|^{d+s}}dZ\right)
\\ &-s(1-s)|\nabla_Y \Psi|\left(\int\limits_{|Z|>\epsilon}\frac{\1^\pm_{E_t^{\gamma-}}(Y+Z)}{|Z|^{d+s}}dZ+ P.V.\int\limits_{|Z|<\epsilon} \frac{\1^\pm_{\{\Psi(t,X,\cdot)> \Psi(t,X,Y)\}}(Y+Z)}{|Z|^{d+s}}dZ\right).
\end{split}
\end{equation}
\end{lemma}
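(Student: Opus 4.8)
The plan is to follow the standard doubling argument that the difference of a subsolution and a supersolution is a subsolution of the doubled equation — the computation underlying the comparison principle for nonlocal geometric flows — adapting the proofs in \cite{CyrilFract,Chambolle,CrystallineChambolle}. Abbreviate $g(t,X)=\1_{E_t^{\gamma-}\cup\Gamma_t^\gamma}(X)=\1_{\{U(t,\cdot)\leq\gamma\}}(X)$ and $h(t,Y)=\1_{E_t^{\gamma-}}(Y)=\1_{\{U(t,\cdot)<\gamma\}}(Y)$, so that $V:=g-h$ is upper semicontinuous in $(t,X,Y)$ — the first set being closed, the second open — and hence an admissible candidate subsolution; by Theorem \ref{t:subsuper}, $g$ is a viscosity subsolution and $h$ a viscosity supersolution of the flow. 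I would take a smooth $\Psi$ with $\Psi\geq V$ near a point $(\bar t,\bar X,\bar Y)$, $\bar t>0$, with $\Psi(\bar t,\bar X,\bar Y)=V(\bar t,\bar X,\bar Y)$, and, after subtracting a penalty such as $|X-\bar X|^4+|Y-\bar Y|^4+(t-\bar t)^2$ — which changes neither the value nor the first-order and time derivatives of $\Psi$ at the touching point — assume the maximum of $V-\Psi$ is strict there. Zero-gradient touching points are covered by the usual relaxed definition for geometric equations, so I may assume $\nabla_X\Psi$ and $\nabla_Y\Psi$ are both nonzero at $(\bar t,\bar X,\bar Y)$.

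The first step is to extract the structural constraints imposed by the nonvanishing gradients. If $\bar X$ lay in the interior of $\{U(\bar t,\cdot)\leq\gamma\}$ or of its complement, then $g(\bar t,\cdot)$ would be constant near $\bar X$ and $X\mapsto\Psi(\bar t,X,\bar Y)$ would attain a local minimum there, contradicting $\nabla_X\Psi\neq 0$; hence $U(\bar t,\bar X)=\gamma$ and $g(\bar t,\bar X)=1$, and symmetrically $U(\bar t,\bar Y)=\gamma$ and $h(\bar t,\bar Y)=0$, so $V(\bar t,\bar X,\bar Y)=1$. It follows that the super-level set in \eqref{e:viscositydouble} is
\[
\{V(\bar t,\cdot,\bar Y)\geq V(\bar t,\bar X,\bar Y)\}=\{U(\bar t,\cdot)\leq\gamma\}=E_{\bar t}^{\gamma-}\cup\Gamma_{\bar t}^\gamma\subseteq\{\Psi(\bar t,\cdot,\bar Y)\geq\Psi(\bar t,\bar X,\bar Y)\},
\]
with the analogous statement in the $Y$ slice, and that $g(t,\bar X)\leq 1=g(\bar t,\bar X)$ and $h(t,\bar Y)\geq 0=h(\bar t,\bar Y)$ for every $t$. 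These semicontinuity-in-time bounds are exactly what makes the time-derivative bookkeeping in the doubling legitimate.

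The heart of the argument is then the usual decoupling: freezing $Y=\bar Y$ one uses the subsolution property of $g$ and freezing $X=\bar X$ the supersolution property of $h$, splitting $\partial_t\Psi(\bar t,\bar X,\bar Y)$ between the two slices to obtain \eqref{e:viscositydouble}, with the nonlocal curvature terms carried through verbatim as in \cite{Chambolle}. The hard part — and, as the remark after \eqref{e:preciseboundarydiff} anticipates, the source of the extra technicalities in the viscosity regime — is that the term frozen in each slice, $h(t,\bar Y)$ or $g(t,\bar X)$, is only semicontinuous in $t$ rather than smooth, because $\Gamma_t^\gamma$ may carry positive measure; this is handled by the standard localization of the maximum point (equivalently, an inf/sup-convolution regularization of $(t,Y)\mapsto\sup_X\big(g(t,X)-\Psi(t,X,Y)\big)$ and its $X$-analogue, as in \cite{Chambolle,CrystallineChambolle}) together with the time-monotonicity $g(t,\bar X)\leq 1$, $h(t,\bar Y)\geq 0$ recorded above. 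Finally, the concrete displayed inequality follows from the abstract one by unfolding the definition of $H_s$, splitting each principal value at scale $\epsilon$, and — since $Z\mapsto\1^\pm_{(\cdot)}(X+Z)$ is monotone under set inclusion — replacing the smooth super-level sets of $\Psi$ by the evolving sets $E_t^{\gamma-}\cup\Gamma_t^\gamma$ and $E_t^{\gamma-}$ on $\{|Z|>\epsilon\}$ while keeping them on $\{|Z|<\epsilon\}$, where the principal value is both needed and well-defined; the inclusion displayed above makes this replacement go in the direction required for the upper bound, and the arbitrariness of $\epsilon$ yields the asserted family of inequalities.
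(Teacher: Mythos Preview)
Your overall strategy --- show that $V=g-h$ with $g=\1_{E_t^{\gamma-}\cup\Gamma_t^\gamma}$, $h=\1_{E_t^{\gamma-}}$ is a subsolution of the doubled equation by testing each factor separately --- is the right instinct, but the paper carries it out along a genuinely different route, and the part of your argument that handles the time-irregularity is not quite right as written.

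The paper does \emph{not} work directly with the indicators. Instead it first proves the doubled inequality for sub/supersolutions that are Lipschitz in space and $C^{1/(1+s)}$ in time (Lemma \ref{l:viscositydoublesmooth}), where the classical time-doubling $\Psi_\epsilon(t,s,X,Y)=\Psi(\tfrac{t+s}{2},X,Y)+\tfrac{|t-s|^2}{2\epsilon}$ goes through cleanly because the H\"older modulus in time controls $|t_\epsilon-s_\epsilon|$ and forces convergence of the penalized maximum points. It then builds Lipschitz viscosity solutions $U^{(n\pm)}$ from truncated signed-distance initial data, applies Lemma \ref{l:viscositydoublesmooth} to $U^{(n+)}(t,X)-U^{(n-)}(t,Y)$, and finally shows that $\1_{E_t^{\gamma-}\cup\Gamma_t^\gamma}(X)-\1_{E_t^{\gamma-}}(Y)$ is the upper semicontinuous envelope of this sequence, whence a subsolution by stability. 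The non-fattening hypothesis on $\Gamma_t^\gamma$ is used precisely to identify the envelope.

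Your direct ``freeze one variable'' approach runs into the issue you yourself flag: freezing $Y=\bar Y$, the candidate test function for $g$ is $\Psi(t,X,\bar Y)+h(t,\bar Y)$, and $h(t,\bar Y)$ is only lower semicontinuous in $t$. The trivial bounds $g(t,\bar X)\leq 1$ and $h(t,\bar Y)\geq 0$ go the \emph{wrong} way for this purpose: replacing $h(t,\bar Y)$ by the constant $0$ produces $\Psi(t,X,\bar Y)\leq \Psi(t,X,\bar Y)+h(t,\bar Y)$, which no longer dominates $g$. So the ``time-monotonicity'' does not supply the missing smooth barrier. What actually repairs this is doubling the time variable as well --- exactly the computation in Lemma \ref{l:viscositydoublesmooth} --- but then one must justify the convergence of the penalized maxima and the limit in the nonlocal curvature terms without any time modulus on $g,h$; this is doable (boundedness of $g-h$ and strictness of the original maximum suffice, together with Lemma \ref{l:fractmeancurvsemicont}), but it is a full argument, not the one-line appeal you make. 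The paper's approximation bypasses the issue entirely by manufacturing the needed time regularity before doubling.

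Your last paragraph, deriving the explicit $\epsilon$-split inequality from the abstract one via the inclusion $\{g(\bar t,\cdot)\geq 1\}\subseteq\{\Psi(\bar t,\cdot,\bar Y)\geq\Psi(\bar t,\bar X,\bar Y)\}$ and monotonicity of the kernel, is correct and matches the paper.
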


We leave the proof of Lemma \ref{l:viscositydouble} to the appendix.

Our plan now is to show that \eqref{e:phiinequality} can never hold.

\subsection{Proof of Lemma \ref{l:assumptionscase}}

We now prove that for any initial data $(E_0^-, \Gamma_0, E_0^+)$ satisfying \eqref{e:initialassumptions} and any $\gamma\in \left(\displaystyle\frac{-\eta}{\sqrt{1+L^2}}, \frac{\eta}{\sqrt{1+L^2}}\right)$ with
\begin{equation}
\mathcal{L}^{d}(\Gamma_t^\gamma) = 0 \text{ for almost every time } t\geq 0,
\end{equation}
that $\Gamma_t^\gamma$ has modulus $\omega(\alpha_N(t), \cdot)$ for times $t\in [0,t_0]$ where $t_0<2T$ is arbitrary.  By taking $N\to \infty$ and then $t_0\to 2T$, it then follows that $\Gamma_{2T}^\gamma$ will have modulus $\omega(T,\cdot)$ and thus be $(1+L)$-Lipschitz.  

By Lemma \ref{l:phimodulus}, it suffices to show that 
\begin{equation}
\begin{split}
\1_{E_t^{\gamma-}\cup\Gamma_t^\gamma}(x,x_d) - \1_{E_t^{\gamma-}}(y,y_d)&\leq \Phi(t,x,x_d,y,y_d)
\\&= \left(\omega(\alpha_N(t), |x-y|) -(x_d-y_d)+\frac{1}{2}\right)_+ +\frac{1}{2}
\end{split}
\end{equation}
for all $(t,X,Y)\in [0,t_0]\times \R^d\times \R^d$.  By Lemma \ref{l:phibounds}, we need only consider the case that 
\begin{equation}
t\in [0,t_0], \quad |x|,|y|\leq r(M,\delta(t_0/2), t_0)+2, \quad |x_d|,|y_d|\leq 1+LM.
\end{equation}
Recall that $\alpha_N'(t)\not = 0$ only for $t\in \displaystyle\bigcup\limits_{i=1}^N (a_i,b_i)$, where 
\begin{equation}\label{e:timedefn}
\displaystyle\bigcup\limits_i (a_i,b_i) = \mathcal{T}(t_0)= \left\{t\in (0,t_0):  \mathcal{L}^d (\Gamma_t^\gamma\cap B_{R(t_0)}^d) < \frac{\epsilon(t_0)^{d+s}}{8(2+L)s(1-s)T}\right\},
\end{equation}
and 
\begin{equation}
\epsilon(t_0):=\left(\frac{c\delta(t_0/2)^2}{8(2+L)sT}\right)^{1/(1-s)}, \qquad R(t_0) := r\left(M,\delta(t_0/2), t_0\right)+3+LM+ \left(\displaystyle\frac{1}{8(2+L)(1-s)T}\right)^{-1/s}.
\end{equation}

Without loss of generality, by reindexing we may assume that $0\leq a_1<b_1\leq a_2 < \ldots \leq b_N\leq t_0$.  It follows by the comparison principle and Lemma \ref{l:phibounds} that 
\begin{equation}
\begin{array}{c}
\1_{E_0^{\gamma-}\cup\Gamma_0^\gamma}(X) - \1_{E_0^{\gamma-}}(Y)< \Phi(0,X,Y) 
\\ \Rightarrow\quad \1_{E_t^{\gamma-}\cup\Gamma_t^\gamma}(X) - \1_{E_t^{\gamma-}}(Y)< \Phi(0,X,Y) = \Phi(t,X,Y), \quad t\in [0,a_1].
\end{array}
\end{equation}
By induction, suppose that $\1_{E_t^{\gamma-}\cup\Gamma_t^\gamma}(X) - \1_{E_t^{\gamma-}}(Y)< \Phi(t,X,Y)$ for $t\in [0,a_i]$.  
As $\1_{E_t^{\gamma-}\cup\Gamma_t^\gamma}(X) - \1_{E_t^{\gamma-}}(Y)$ is upper semicontinuous and $\Phi$ is continuous, by Lemma \ref{l:phibounds} we get that 
\begin{equation}
\1_{E_t^{\gamma-}\cup\Gamma_t^\gamma}(X) - \1_{E_t^{\gamma-}}(Y)< \Phi(t,X,Y) \text{ for } t\in (a_i,a_i+\epsilon), 
\end{equation}
for some $\epsilon>0$.  Thus there are two cases.  If  $\1_{E_t^{\gamma-}\cup\Gamma_t^\gamma}(X) - \1_{E_t^{\gamma-}}(Y)< \Phi(t,X,Y)$ for $t\in (a_i,b_i]$, then by the same argument as above 
\begin{equation}
\begin{array}{c}
\1_{E_{b_i}^{\gamma-}\cup\Gamma_{b_i}^\gamma}(X) - \1_{E_{b_i}^{\gamma-}}(Y)< \Phi(b_i,X,Y) 
\\ \Rightarrow\quad \1_{E_t^{\gamma-}\cup\Gamma_t^\gamma}(X) - \1_{E_t^{\gamma-}}(Y)< \Phi(b_i,X,Y) = \Phi(t,X,Y), \quad t\in [b_i,a_{i+1}].
\end{array}
\end{equation}

Else, we must that that $\Phi$ crosses $\1_{E_t^{\gamma-}\cup\Gamma_t^\gamma}- \1_{E_t^{\gamma-}}$ for some $t\in (a_i,b_i]$. 
If the crossing point was at time $b_i$, then by replacing $\Phi(t)$ with $\Phi(t-\epsilon)> \Phi(t)$ for some arbitrary $\epsilon>0$, we can regain the strict inequality.  Following the rest of the argument, we then get that at time $t_0$, $\Gamma_{t_0}^\gamma$ has modulus $\omega(\alpha_N(t_0-N\epsilon), \cdot)$ for an arbitrary $\epsilon>0$.  Hence, $\Gamma_{t_0}^\gamma$ has modulus $\omega(\alpha_N(t_0),\cdot)$.  

So without loss of generality, we may assume that any crossing point happens in the open interval $(a_i, b_i)\subset \mathcal{T}(t_0)$.  We will show that this is not possible,  proving that $\1_{E_t^{\gamma-}\cup\Gamma_t^\gamma}- \1_{E_t^{\gamma-}}< \Phi$ for all times $t\in [0,t_0]$ and thus completing the proof of Lemma \ref{l:assumptionscase} by Lemma \ref{l:phimodulus}.  

So now assume that $\Phi$ crosses $\1_{E_t^{\gamma-}\cup\Gamma_t^\gamma}- \1_{E_t^{\gamma-}}$ for some time $t\in (a_i,b_i)$.  By Lemma \ref{l:phibounds} we can thus assume our crossing point $(t,x,x_d,y,y_d)$ satisfies
\begin{equation}
t\in (a_i,b_i), \quad |x|,|y| \leq r(M,\delta(t_0/2), t_0)+2, \quad |x_d|,|y_d|\leq 1+LM.
\end{equation}
At the crossing point, we necessarily have that $\1_{E_t^{\gamma-}\cup\Gamma_t^\gamma}(x,x_d)- \1_{E_t^{\gamma-}}(y,y_d) = 1$.  The strict monotonicity of $\Phi$ in $x_d,y_d$ then implies that $x_d = \overline{u}^\gamma(t,x)$ and $y_d = \underline{u}^\gamma(t,y)$.  

By the definition of $\Phi$ \eqref{e:phidefn} we have that $\Phi$ is $C^1$ in time and $C^{1,1}$ in $X,Y$ on a neighborhood of the set $\Phi^{-1}(1)\subseteq [0,t_0]\times \R^d\times \R^d$.  Thus $\Phi$ is a valid test function for our purposes.  Taking $X= (x,\overline{u}^\gamma(t,x))$ and $Y = (y,\underline{u}^\gamma (t,y))$, applying Lemma \ref{l:viscositydouble} gives us that   
\begin{equation}\label{e:phiinequality}
\begin{split}
\partial_t \Phi(t,X,Y) \leq & s(1-s)|\nabla_X\Phi|\left(\int\limits_{|Z|>\epsilon(t_0)} \frac{\1^\pm_{E_t^{\gamma -}\cup\Gamma_t^\gamma}(X+Z)}{|Z|^{d+s}}dZ  + \int\limits_{|Z|<\epsilon(t_0)} \frac{\1^\pm_{\{\Phi(t,\cdot,Y)\geq \Phi(t,X,Y)\}}(X+Z)}{|Z|^{d+s}}dZ\right)
\\ &-s(1-s)|\nabla_Y \Phi|\left(\int\limits_{|Z|>\epsilon(t_0)}\frac{\1^\pm_{E_t^{\gamma-}}(Y+Z)}{|Z|^{d+s}}dZ+ \int\limits_{|Z|<\epsilon(t_0)} \frac{\1^\pm_{\{\Phi(t,X,\cdot)> \Phi(t,X,Y)\}}(Y+Z)}{|Z|^{d+s}}dZ\right)
\end{split}
\end{equation}
We will show that \eqref{e:phiinequality} is not possible, thus ruling out any crossing points.  

From the definition of $\Phi$ \eqref{e:phidefn}, we can immediately calculate that 
\begin{equation}\label{e:phitimebounds}
\partial_t\Phi(t,X,Y) = \frac{1}{2}\partial_t \omega(\alpha_N(t), |x-y|) >  \delta'(t/2) = \frac{-1}{T},
\end{equation}
and
\begin{equation}\label{e:phigradbounds}
\begin{array}{l}\ |\nabla_X \Phi(t,X,Y)| = |\nabla_Y \Phi(t,X,Y)| = \sqrt{1+\partial_r \omega(\alpha_N(t), |x-y|)^2},
\\ \Rightarrow \quad 1\leq |\nabla_X \Phi(t,X,Y)| = |\nabla_Y \Phi(t,X,Y)|\leq 2+L
\end{array}
\end{equation}

As $||\Phi(t,\cdot, Y)||_{\dot{W}^{2,\infty}}\leq ||\omega(\alpha_N(t),\cdot)||_{\dot{W}^{2,\infty}}\leq (c\delta(t_0/2)^2)^{-1}$,
 we have that 
\begin{equation}\label{e:pvphiestimate}
\begin{split}
\bigg|P.V.\int\limits_{|Z|<\epsilon(t_0)} \frac{\1^\pm_{\{\Phi(t,\cdot,Y)\geq \Phi(t,X,Y)\}}(X+Z)}{|Z|^{d+s}}dZ\bigg|&\leq \int\limits_{|z|<\epsilon(t_0)} \int\limits_{- (c\delta(t_0/2)^2)^{-1}|z|^2/2}^{ (c\delta(t_0/2)^2)^{-1}|z|^2/2} \frac{1}{(|z|^2+z_d^2)^{(d+s)/2}}dz_d dz
\\&\leq \int\limits_{|z|<\epsilon(t_0)} \frac{(c\delta(t_0/2)^2)^{-1}}{|z|^{d+s-2}}dz\leq \frac{\epsilon(t_0)^{1-s}}{(1-s)c\delta(t_0/2)^2}
\\&= \frac{1}{8(2+L)s(1-s)T}.
\end{split}
\end{equation}
The same holds for the $Y$ integral as well, so combining \eqref{e:pvphiestimate} with \eqref{e:phigradbounds} gives
\begin{equation}\label{e:pvphiestimatefinal}
\begin{split}
s(1-s)|\nabla_X&\Phi(t,X,Y)|\bigg|P.V.\int\limits_{|Z|<\epsilon(t_0)} \frac{\1^\pm_{\{\Phi(t,\cdot,Y)\geq \Phi(t,X,Y)\}}(X+Z)}{|Z|^{d+s}}dZ\bigg|
\\&+ s(1-s)|\nabla_Y\Phi(t,X,Y)|\bigg|P.V.\int\limits_{|Z|<\epsilon(t_0)} \frac{\1^\pm_{\{\Phi(t,X,\cdot)> \Phi(t,X,Y)\}}(Y+Z)}{|Z|^{d+s}}dZ\bigg| \leq \frac{1}{4T}.
\end{split}
\end{equation}

Plugging in \eqref{e:pvphiestimatefinal} and \eqref{e:phigradbounds} into \eqref{e:phiinequality} we get that 
\begin{equation}
\begin{split}
\partial_t \Phi(t,X,Y) \leq & \frac{1}{4T} + s(1-s)\sqrt{1+\partial_r\omega(\alpha_N(t),|x-y|)^2} \int\limits_{|Z|>\epsilon(t_0)} \frac{\1^\pm_{E_t^{\gamma -}\cup\Gamma_t^\gamma}(X+Z)-\1^\pm_{E_t^{\gamma-}}(Y+Z)}{|Z|^{d+s}}dZ  
\end{split}
\end{equation}

As 
\begin{equation}
\1^\pm_{E_t^{\gamma -}\cup\Gamma_t^\gamma}(X+Z)-\1^\pm_{E_t^{\gamma-}}(Y+Z) = 2\1_{\Gamma_t^\gamma}(X+Z) + \1^\pm_{E_t^{\gamma -}}(X+Z)-\1^\pm_{E_t^{\gamma-}}(Y+Z)
\end{equation}
we then have that 
\begin{equation}\label{e:timederivfirstbound}
\begin{split}
\partial_t \Phi(t,X,Y) \leq & \frac{1}{4T} + 2s(1-s)\sqrt{1+\partial_r\omega(\alpha_N(t),|x-y|)^2} \int\limits_{|Z|>\epsilon(t_0)} \frac{\1_{\Gamma_t^\gamma}(X+Z)}{|Z|^{d+s}}dZ  
\\&+s(1-s)\sqrt{1+\partial_r\omega(\alpha_N(t),|x-y|)^2} 
\int\limits_{|Z|>\epsilon(t_0)} \frac{\1^\pm_{E_t^{\gamma -}}(X+Z)-\1^\pm_{E_t^{\gamma-}}(Y+Z)}{|Z|^{d+s}}dZ.
\end{split}
\end{equation}
Letting $r_s = \left(\displaystyle\frac{1}{8(2+L)(1-s)T}\right)^{-1/s}$ and noting that 
\begin{equation}
|X|+r_s\leq r(M,\delta(t_0/2),t_0)+3+LM+r_s = R(t_0),
\end{equation}
 by Lemma \ref{l:phibounds}, we can bound the integral of the boundary term using \eqref{e:timedefn} as 
\begin{equation}\label{e:boundaryintbound}
\begin{split}
\int\limits_{|Z|>\epsilon(t_0)} \frac{\1_{\Gamma_t^\gamma}(X+Z)}{|Z|^{d+s}}dZ  &\leq \int\limits_{|Z|>r_s} \frac{1}{|Z|^{d+s}}dZ + \epsilon^{-(d+s)}\int\limits_{|Z|<r_s} \1_{\Gamma_t^\gamma}(X+Z)dZ 
\\& \leq \int\limits_{r_s}^\infty \frac{1}{r^{1+s}}dr + \epsilon(t_0)^{-(d+s)}\mathcal{L}^d(B_{r_s}^d(X)\cap \Gamma_t^\gamma)
\\& \leq \frac{1}{sr_s^{s}} + \epsilon(t_0)^{-(d+s)}\mathcal{L}^d(B_{R(t_0)}^d\cap \Gamma_t^\gamma)
\\& \leq \frac{1}{8(2+L)s(1-s)T} + \frac{1}{8(2+L)s(1-s)T} 
\\& = \frac{1}{4(2+L)s(1-s)T}.
\end{split}
\end{equation}

Plugging \eqref{e:boundaryintbound} back into \eqref{e:timederivfirstbound} then gives us
\begin{equation}\label{e:timederivsecondbound}
\partial_t \Phi(t,X,Y) \leq  \frac{3}{4T} +s(1-s)\sqrt{1+\partial_r\omega(\alpha_N(t),|x-y|)^2} 
\int\limits_{|Z|>\epsilon(t_0)} \frac{\1^\pm_{E_t^{\gamma -}}(X+Z)-\1^\pm_{E_t^{\gamma-}}(Y+Z)}{|Z|^{d+s}}dZ.
\end{equation}

Now the only thing that remains is to bound 
\begin{equation}
\int\limits_{|Z|>\epsilon(t_0)} \frac{\1^\pm_{E_t^{\gamma -}}(X+Z)-\1^\pm_{E_t^{\gamma-}}(Y+Z)}{|Z|^{d+s}}dZ.
\end{equation}
Applying Lemma \ref{l:fixedzestimate}, we get
\begin{equation}\label{e:intsetup}
\begin{split}
\int\limits_{|Z|>\epsilon(t_0)} &\frac{\1^\pm_{E_t^{\gamma -}}(X+Z)-\1^\pm_{E_t^{\gamma-}}(Y+Z)}{|Z|^{d+s}}dZ 
\\&\leq \int\limits_{|z|>\epsilon(t_0)} \int\limits_{\underline{u}^\gamma\left(t,x+z\right)-\overline{u}^\gamma\left(t,x\right)}^{\overline{u}^\gamma\left(t,y+z\right)-\underline{u}^\gamma\left(t,y\right)} \frac{\1^\pm_{E_t^{\gamma -}}(x+z,\overline{u}^\gamma(t,x)+z_d)-\1^\pm_{E_t^{\gamma-}}(y+z,\underline{u}^\gamma(t,y)+z_d)}{(|z|^2+\omega(\alpha_N(t),|z|)^2)^{(d+s)/2}}dz_d dz 
\\&= \int\limits_{\R^{d-1}}  \int\limits_{\underline{u}^\gamma\left(t,x+z\right)-\overline{u}^\gamma\left(t,x\right)}^{\overline{u}^\gamma\left(t,y+z\right)-\underline{u}^\gamma\left(t,y\right)}\frac{\1^\pm_{E_t^{\gamma -}}(x+z,\overline{u}^\gamma(t,x)+z_d)-\1^\pm_{E_t^{\gamma-}}(y+z,\underline{u}^\gamma(t,y)+z_d)}{(|z|^2+\omega(\alpha_N(t),|z|)^2)^{(d+s)/2}}dz_d dz 
\\& \quad -\int\limits_{|z|<\epsilon(t_0)}  \int\limits_{\underline{u}^\gamma\left(t,x+z\right)-\overline{u}^\gamma\left(t,x\right)}^{\overline{u}^\gamma\left(t,y+z\right)-\underline{u}^\gamma\left(t,y\right)} \frac{\1^\pm_{E_t^{\gamma -}}(x+z,\overline{u}^\gamma(t,x)+z_d)-\1^\pm_{E_t^{\gamma-}}(y+z,\underline{u}^\gamma(t,y)+z_d)}{(|z|^2+\omega(\alpha_N(t),|z|)^2)^{(d+s)/2}}dz_d dz 
\end{split}
\end{equation}
We can bound that last error term rather easily by 
\begin{equation}
\begin{split}
-\int\limits_{|z|<\epsilon(t_0)} \int\limits_{\underline{u}^\gamma\left(t,x+z\right)-\overline{u}^\gamma\left(t,x\right)}^{\overline{u}^\gamma\left(t,y+z\right)-\underline{u}^\gamma\left(t,y\right)}& \frac{\1^\pm_{E_t^{\gamma -}}(x+z,\overline{u}^\gamma(t,x)+z_d)-\1^\pm_{E_t^{\gamma-}}(y+z,\underline{u}^\gamma(t,y)+z_d)}{(|z|^2+\omega(\alpha_N(t),|z|)^2)^{(d+s)/2}}dz_d dz 
\\&\leq \int\limits_{|z|<\epsilon(t_0)} \int\limits_{-\omega(\alpha_N(t), |z|)}^{\omega(\alpha_N(t), |z|)} \frac{2}{\omega(\alpha_N(t),0)^{d+s}}dz_d dz 
\\&\leq \frac{4\omega(\alpha_N(t), \epsilon(t_0))\mathcal{L}^{d-1}(B_1^{d-1}) }{\delta(t_0/2)^{d+s}}\epsilon(t_0)^{d-1}
\\& = \frac{4\omega(\alpha_N(t), \epsilon(t_0))\mathcal{L}^{d-1}(B_1^{d-1}) }{\delta(t_0/2)^{d+s}} \left(\frac{c\delta(t_0/2)^2}{16sT}\right)^{(d-1)/(1-s)}
\\& \lesssim_{d,s,L,M}  \delta(t_0/2)^{d-1-s}
\end{split}
\end{equation}
Thus as long as we take $t_0$ sufficiently close to $2T$ and thus $\delta(t_0/2)$ sufficiently small, we can guarantee that 
\begin{equation}\label{e:epsilonbound}
\begin{split}
-\int\limits_{|z|<\epsilon(t_0)} \int\limits_{\underline{u}^\gamma\left(t,x+z\right)-\overline{u}^\gamma\left(t,x\right)}^{\overline{u}^\gamma\left(t,y+z\right)-\underline{u}^\gamma\left(t,y\right)} &\frac{\1^\pm_{E_t^{\gamma -}}(x+z,\overline{u}^\gamma(t,x)+z_d)-\1^\pm_{E_t^{\gamma-}}(y+z,\underline{u}^\gamma(t,y)+z_d)}{(|z|^2+\omega(\alpha_N(t),|z|)^2)^{(d+s)/2}}dz_d dz 
\\&\leq \frac{1}{4(2+L)s(1-s)T}.
\end{split}
\end{equation}
Since we care about that limit as $t_0\to 2T$, this isn't an issue.  

Plugging \eqref{e:intsetup} and \eqref{e:epsilonbound} into \eqref{e:timederivsecondbound} we get that 
\begin{equation}
\begin{split}
\ &\frac{\partial_t \Phi(t,X,Y) -\frac{1}{T} }{s(1-s)\sqrt{1+\partial_r\omega(\alpha_N(t),|x-y|)^2}} 
\\  &\qquad \leq \int\limits_{\R^{d-1}} \int\limits_{\underline{u}^\gamma\left(t,x+z\right)-\overline{u}^\gamma\left(t,x\right)}^{\overline{u}^\gamma\left(t,y+z\right)-\underline{u}^\gamma\left(t,y\right)} \frac{\1^\pm_{E_t^{\gamma -}}(x+z,\overline{u}^\gamma(t,x)+z_d)-\1^\pm_{E_t^{\gamma-}}(y+z,\underline{u}^\gamma(t,y)+z_d)}{(|z|^2+\omega(\alpha_N(t),|z|)^2)^{(d+s)/2}}dz_d dz 
\end{split}
\end{equation}

Now after all of this setup, we have essentially returned to smooth case.  Indeed, all of our integral bounds in sections 3 and 4 never used that the flow $t\to E_t$ was smooth.  So we can apply Lemmas \ref{l:P.V.estimate} through \ref{l:omeganegintbound} to get that just as in the smooth case, 
\begin{equation}
\begin{split}
s(1-s)\sqrt{1+\partial_r\omega(\alpha_N(t),|x-y|)^2} &
\int\limits_{\R^{d-1}}\int\limits_\R \frac{\1^\pm_{E_t^{\gamma -}}(x+z,\overline{u}^\gamma(t,x)+z_d)-\1^\pm_{E_t^{\gamma-}}(y+z,\underline{u}^\gamma(t,y)+z_d)}{(|z|^2+\omega(\alpha_N(t),|z|)^2)^{(d+s)/2}}dz_d dz
\\& \leq\frac{-2}{T}.
\end{split}
\end{equation}
Recalling \eqref{e:phitimebounds}, we thus have that 
\begin{equation}
\partial_t \Phi(t,X,Y) \leq \frac{1}{T}-\frac{2}{T} = \frac{-1}{T} =\delta'(t/2)<\alpha'_N(t)\partial_t\omega(\alpha_N(t),|x-y|) = \partial_t\Phi(t,X,Y),
\end{equation}
a contradiction.  Thus we could not have a crossing point for times $t\in (a_i,b_i)$.  By induction, it then follows that there is no crossing point for any time $t\in [0,t_0]$, so 
\begin{equation}
\1_{E_t^{\gamma-}\cup\Gamma_t^\gamma}(X) - \1_{E_t^{\gamma-}}(Y)\leq \Phi(t,X,Y), \qquad \text{ for all } (t,X,Y)\in [0,t_0]\times \R^d\times \R^d.
\end{equation}
Hence, $\Gamma_t^\gamma$ has modulus $\omega(\alpha_N(t),\cdot)$ for all times $t\in [0,t_0].$  Letting $N\to \infty$ and $t_0\to 2T$, we then have that $\Gamma_t^\gamma$ has modulus $\omega(t/2, \cdot)$ for all times $t\in [0,2T]$.  Thus $\Gamma_{2T}^\gamma$ is $(1+L)$-Lipschitz.

%%%%%%%%%%%%%%%%%%%%%%%%%%%%%%%%%%%%%%%%%%%%%%%%%%%%%%%%%%%%%%%%%%%%%%%Section 7                     																							%%%%%%%%%%%%%%%%%%%%%%%%%%%%%%%%%%%%%%%%%%%%%%%%%%%%%%%%%%%%%%%%%%%%%%

\section{Proofs by Approximations}

\begin{corollary}\label{c:decaycase}
Let $(E_0^-, \Gamma_0, E_0^+)$ satisfy \eqref{e:initialassumptions}.  Then for every $\gamma\in \left(\displaystyle\frac{-\eta}{\sqrt{1+L^2}},\frac{\eta}{\sqrt{1+L^2}}\right)$, $\partial E_{2T}^{\gamma\pm}$ is $(1+L)$-Lipschitz.  
\end{corollary}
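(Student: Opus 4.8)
The plan is to deduce the corollary from Lemma \ref{l:assumptionscase} by approximating $E_{2T}^{\gamma-}$ from below and $E_{2T}^{\gamma+}$ from above by level sets at ``good'' parameter values, and then passing to a monotone limit of uniformly Lipschitz graphs. Call $\gamma'$ \emph{good} if $\mathcal L^{d}(\Gamma_t^{\gamma'})=0$ for almost every $t\geq 0$; by the Proposition preceding the definition of $\overline{u}^\gamma,\underline{u}^\gamma$, the set of values that are not good is at most countable. Fix $\gamma\in\bigl(-\tfrac{\eta}{\sqrt{1+L^2}},\tfrac{\eta}{\sqrt{1+L^2}}\bigr)$. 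Since this open interval is uncountable, I can choose good values $\gamma_n\nearrow\gamma$ and $\gamma_n'\searrow\gamma$ all lying inside it. By Lemma \ref{l:assumptionscase}, each $\Gamma_{2T}^{\gamma_n}$ and each $\Gamma_{2T}^{\gamma_n'}$ is a $(1+L)$-Lipschitz graph, and since Lemma \ref{l:gammainitialmod} guarantees that $E_t^{\gamma\pm}$ always have globally defined upper and lower boundaries, I may write $\Gamma_{2T}^{\gamma_n}=\text{graph}(u_n)$ and $\Gamma_{2T}^{\gamma_n'}=\text{graph}(u_n')$ for functions $u_n,u_n':\R^{d-1}\to\R$ with $\|\nabla u_n\|_{L^\infty},\|\nabla u_n'\|_{L^\infty}\leq 1+L$.

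Next I would verify that at a good value the level set separates $\R^d$ cleanly into the open sub- and supergraph of the corresponding Lipschitz function. Fix $n$. Because $u_n$ is continuous, $\R^d\setminus\Gamma_{2T}^{\gamma_n}$ has exactly the two connected components $\{x_d<u_n(x)\}$ and $\{x_d>u_n(x)\}$. Since $\partial E_{2T}^{\gamma_n-}\subseteq\Gamma_{2T}^{\gamma_n}$ and $\partial E_{2T}^{\gamma_n+}\subseteq\Gamma_{2T}^{\gamma_n}$, while $E_{2T}^{\gamma_n-},E_{2T}^{\gamma_n+}$ are disjoint open sets whose union is $\R^d\setminus\Gamma_{2T}^{\gamma_n}$, each of the two components lies entirely in one of them. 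The inclusions $\{x_d\leq -1-LM\}\subseteq E_t^{\gamma-}$ and $\{x_d\geq 1+LM\}\subseteq E_t^{\gamma+}$, valid for every $t$ by the comparison argument in the proof of Lemma \ref{l:phibounds}, force $-1-LM<u_n<1+LM$; hence the lower component meets $E_{2T}^{\gamma_n-}$ and the upper component meets $E_{2T}^{\gamma_n+}$, so $E_{2T}^{\gamma_n-}=\{x_d<u_n(x)\}$ and $E_{2T}^{\gamma_n+}=\{x_d>u_n(x)\}$, with the analogous identities for each $\gamma_n'$.

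Finally I pass to the limit. Since $\gamma_n\nearrow\gamma$, the sets $E_{2T}^{\gamma_n-}=\{U(2T,\cdot)<\gamma_n\}$ increase with union $E_{2T}^{\gamma-}=\{U(2T,\cdot)<\gamma\}$, so the $u_n$ are nondecreasing; being uniformly bounded, they converge pointwise to $u:=\sup_n u_n$, which is again $(1+L)$-Lipschitz. Taking unions of the subgraphs gives $E_{2T}^{\gamma-}=\bigcup_n\{x_d<u_n(x)\}=\{x_d<u(x)\}$, so $\partial E_{2T}^{\gamma-}=\text{graph}(u)$ is $(1+L)$-Lipschitz. Symmetrically, since $\gamma_n'\searrow\gamma$ the sets $E_{2T}^{\gamma_n'+}$ increase with union $E_{2T}^{\gamma+}$, the $u_n'$ are nonincreasing and uniformly bounded, and $u':=\inf_n u_n'$ is $(1+L)$-Lipschitz with $E_{2T}^{\gamma+}=\{x_d>u'(x)\}$ and $\partial E_{2T}^{\gamma+}=\text{graph}(u')$. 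The functions $u$ and $u'$ need not coincide when $\gamma$ is a fattening value, but this is harmless, since the corollary only asserts Lipschitz regularity of each of $\partial E_{2T}^{\gamma-}$ and $\partial E_{2T}^{\gamma+}$ separately.

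I expect the main obstacle to be the middle step, namely showing that at a good value $E_{2T}^{\gamma_n\pm}$ really is exactly one side of the Lipschitz graph $\Gamma_{2T}^{\gamma_n}$ rather than something more complicated, which depends on the a priori $x_d$-confinement of $\Gamma_{2T}^{\gamma_n}$ coming from the comparison bounds of Lemma \ref{l:phibounds}. It is also worth stressing the structural point that $E^-$ must be approximated from below and $E^+$ from above; the opposite sidedness would produce closed rather than open level sets in the limit. Once these points are in hand, the passage to the limit is a routine fact about monotone families of uniformly Lipschitz graphs.
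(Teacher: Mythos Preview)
Your proof is correct and follows essentially the same approach as the paper: approximate $\gamma$ by good values $\gamma_n\nearrow\gamma$ (and $\gamma_n'\searrow\gamma$ for the other side), invoke Lemma~\ref{l:assumptionscase} at each $\gamma_n$, and pass to the limit. The only cosmetic difference is that the paper packages the limiting step as a pointwise cone-inclusion argument (for each $X_0\in\partial E_{2T}^{\gamma-}$, pick the nearest $X_n\in\Gamma_{2T}^{\gamma_n}$ and observe that the downward $(1+L)$-cone at $X_n$ lies in $E_{2T}^{\gamma_n-}\subseteq E_{2T}^{\gamma-}$, then take the union), whereas you take the global monotone limit of the subgraph functions $u_n$ directly; these are equivalent, and your version is arguably more explicit. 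Your ``middle step'' identifying $E_{2T}^{\gamma_n-}$ with the open subgraph of $u_n$ is stated more carefully than in the paper, which simply asserts it.
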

\begin{proof}
Without loss of generality, we will simply prove in the case that $\gamma = 0$ that $\partial E_{2T}^-$ is $(1+L)$-Lipschitz.  Let $X_0\in \partial  E_{2T}^-$ be arbitrary, and consider a sequence $(\gamma_n)_{n=1}^\infty$ such that  $\displaystyle\frac{-\eta}{\sqrt{1+L^2}}<\gamma_1<\gamma_2 <\ldots  <\gamma_n\to 0$ and $\gamma_n$ satisfies
\begin{equation}
\mathcal{L}^d(\Gamma_t^{\gamma_n}) = 0, \quad \text{ for almoset every }t.
\end{equation}
For each $n\in N$, let $X_n\in \Gamma_{2T}^{\gamma_n}$ be a point closest to $X_0$.  Then 
\begin{equation}
|X_n-X_0|\geq |X_{n+1}-X_0|\to 0.
\end{equation}
By Lemma \ref{l:assumptionscase},  $\Gamma_{2T}^{\gamma_n} = \text{graph}(u^{\gamma_n}:\R^{d-1}\to \R)$ is a $(1+L)$-Lipschitz graph with $\overline{E^{\gamma_n-}_{2T}} = \{(x,x_d)| x_d\leq u^{\gamma_n}(x)\}$, it follows that
\begin{equation}
\{X_n-(z, z_d): z\in \R^{d-1},  z_d\geq (1+L)|z|\}\subseteq \overline{E_{2T}^{\gamma_n -}}\subseteq E_{2T}^-.
\end{equation}
Thus taking the union we get
\begin{equation}
\{X_0-(z,z_d): z\in \R^{d-1}, z_d>(1+L)|z|\}\subseteq \bigcup\limits_{n=1}^\infty \{X_n-(z, z_d): z\in \R^{d-1}, z_d\geq (1+L)|z|\} \subseteq E_{2T}^-.
\end{equation}
As $X_0\in \partial E_{2T}^-$ was arbitrary, we thus have that $\partial E_{2T}^-$ is a $(1+L)$-Lipschitz graph.  
\end{proof}

We've now proven Theorem \ref{t:main} under the assumptions that 
\begin{equation}
\left\{\begin{array}{l} 1). E_0^\pm \text{ have modulus } (1-\eta)+Lr,
\\ 2). 0 \in \Gamma_0
\\ 3). \Gamma_0 \setminus (B_M^{d-1}\times \R) = \{(x,0): x\in \R^{d-1}, |x|\geq M\},
\end{array}\right.
\end{equation}
for some $0<\eta<<1$ and $1<<M<\infty$.  Our next goal is to remove the flatness assumption and allow more arbitrary behavior at infinity by letting $M\to \infty$, but in order to justify that we need some compactness.  

\begin{lemma}\label{l:compactness}
(Compactness) Let $U_0: \R^d\to \R$ be 1-Lipschitz.  Then the unique viscosity solution $U: [0,\infty)\times \R^d\to \R$ is 1-Lipschitz in space with 
\begin{equation}
\sup\limits_{X\in \R^d}\sup\limits_{t,t'>0} \frac{|U(t,X)-U(t',X)|}{|t-t'|^{1/(1+s)}}\leq \left[(1+s)H_s(B_1^d)\right]^{1/(1+s)}.
\end{equation}
\end{lemma}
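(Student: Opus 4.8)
The plan is to derive both estimates from the comparison principle, exploiting the geometric (translation- and scaling-) invariance of the flow together with spherical barriers.

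\smallskip

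\noindent\textbf{Spatial Lipschitz bound.} First I would use that the level set equation is geometric, hence invariant under spatial translations (Proposition~\ref{p:fracmeancurvbasics}) and under adding a constant to the solution. Given $h\in\R^d$, since $U_0$ is $1$-Lipschitz we have $U_0(\cdot+h)\le U_0(\cdot)+|h|$. The viscosity solution with initial data $U_0(\cdot+h)$ is $U(t,\cdot+h)$, while the solution with initial data $U_0(\cdot)+|h|$ is $U(t,\cdot)+|h|$; the comparison principle (Theorem~\ref{t:comparison}, Proposition~\ref{p:comparisonprinciple}) then gives $U(t,X+h)\le U(t,X)+|h|$ for all $t,X$, and interchanging $h$ and $-h$ yields that $U(t,\cdot)$ is $1$-Lipschitz in space for every $t$.

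\smallskip

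\noindent\textbf{H\"older-in-time at the initial time.} I would next show $|U(\tau,X_0)-U_0(X_0)|\le C\tau^{1/(1+s)}$ with $C=[(1+s)H_s(B_1^d)]^{1/(1+s)}$. Fix $X_0$, set $c=U_0(X_0)$, and let $\mu>c$. Since $U_0$ is $1$-Lipschitz, $U_0<\mu$ on the open ball $B_{\mu-c}(X_0)$, so the closed set $\{U_0\ge\mu\}$ lies in $\R^d\setminus B_{\mu-c}(X_0)$. Using the scaling relation $H_s(\partial B_\rho)=\rho^{-s}H_s(B_1^d)$ (Corollary~\ref{c:fracmeancurvflowrescaling}), the complement of the ball $B_{\rho(t)}(X_0)$ with $\rho(t)^{1+s}=(\mu-c)^{1+s}-(1+s)H_s(B_1^d)\,t$ is an exact solution of the flow (its hole shrinks, vanishing at $t^\ast=(\mu-c)^{1+s}/((1+s)H_s(B_1^d))$). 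By the set comparison principle (Proposition~\ref{p:comparisonprinciple}) together with the consistency of the level set method identifying $\{U(t,\cdot)\ge\mu\}$ with the evolution of $\{U_0\ge\mu\}$, one gets $\{U(t,\cdot)\ge\mu\}\subseteq\R^d\setminus B_{\rho(t)}(X_0)$ for $t<t^\ast$, hence $U(t,X_0)<\mu$ for $t<t^\ast$. Thus $U(\tau,X_0)\ge\mu$ forces $\tau\ge(\mu-c)^{1+s}/((1+s)H_s(B_1^d))$, i.e.\ $\mu\le c+C\tau^{1/(1+s)}$; letting $\mu\downarrow U(\tau,X_0)$ gives $U(\tau,X_0)\le U_0(X_0)+C\tau^{1/(1+s)}$. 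The reverse inequality is symmetric, using $\{U_0\le\mu\}\subseteq\R^d\setminus B_{c-\mu}(X_0)$ for $\mu<c$.

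\smallskip

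\noindent\textbf{General times and the main obstacle.} For $0\le t<t'$, by uniqueness and time-translation invariance $U(t+\cdot,\cdot)$ is the viscosity solution with initial data $U(t,\cdot)$, which is $1$-Lipschitz in space by the first step; applying the second step to this solution with $\tau=t'-t$ yields $|U(t',X_0)-U(t,X_0)|\le C(t'-t)^{1/(1+s)}$, completing the proof. The step I expect to require the most care is making the spherical-hole barrier rigorous in the weak setting: verifying that the shrinking complement of a ball is genuinely a viscosity solution (equivalently, that $\rho(t)^{1+s}=\rho_0^{1+s}-(1+s)H_s(B_1^d)t$ is the correct law of evolution, with $H_s(B_1^d)$ finite and positive), and invoking the comparison principle with the right open/closed conventions for these unbounded sets — though all of this is standard within the framework of \cite{CyrilFract,Chambolle}.
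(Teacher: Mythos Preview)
Your proposal is correct and follows essentially the same approach as the paper: both use the comparison principle with spherical barriers, exploiting that a ball of radius $r$ vanishes in time $r^{1+s}/((1+s)H_s(B_1^d))$. The only cosmetic difference is that the paper argues directly between two arbitrary times $t<t'$ (using that $U(t,\cdot)$ is already $1$-Lipschitz to place a ball of radius $U(t,X)$ inside $E_t^+$), whereas you first prove the estimate from $t=0$ and then invoke time-translation invariance; the content is identical.
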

\begin{proof}
By Theorem \ref{t:comparison} we have that the viscosity solution $U(t,\cdot)$ will be 1-Lipschitz for all times $t$, so we only need to prove the $C^{1/(1+s)}$ estimate in time.  Without loss of generality, assume that $t<t'$,$U(t',X)=0$, and $U(t,X)> 0$.  As $U(t,\cdot)$ is 1-Lipschitz, we have that 
\begin{equation}
d(X,\Gamma_t)\geq U(t,X).
\end{equation}
Thus $B^d(X,U(t,X))\subseteq E_t^+$.  Let $H_s(B_1^d)$ be the $s$-fractional mean curvature of $B_1^d$.  Then 
\begin{equation}
\tau \to B^d(X,\left[U(t,X)^{1+s}-(1+s)H_s(B_1^d)\tau\right]^{1/(1+s)})
\end{equation}
is smooth fractional mean curvature flow for $0\leq \tau< \frac{U(t,X)^{1+s}}{(1+s)H_s(B_1^d)}$.  Thus by comparison principle (Proposition \ref{p:comparisonprinciple}), we then have that $X\in E^{+}_{t+\tau}$ for all such $\tau$.  As $t'>t$ and $U(t',X) =  0$, we thus have that $|t'-t|\geq \frac{U(t,X)^{1+s}}{(1+s)H_s(B_1^d)}$.  Thus 
\begin{equation}
\sup\limits_{t'>t} \frac{|U(t,X)-U(t',X)|}{|t-t'|^{1/(1+s)}}\leq \left[(1+s)H_s(B_1^d)\right]^{1/(1+s)}.
\end{equation}
\end{proof}

\subsection{Proof of Theorem \ref{t:main}}

\begin{proof}
It suffices to show that if $(E_0^-, \Gamma_0, E_0^+)$ is such that $E_0^{\pm}$ have modulus of continuity $1-\eta+Lr$ for some $\eta>0$, then $\partial E_{t}^{\pm}$ are $(1+L)$-Lipschtiz graphs for $t\geq 2T(d,s,L)$.    

Without loss of generality, we may assume that $0\in \Gamma_0$.  To begin, let $U$ be the viscosity solution of the level set equation \eqref{e:viscosity} for the initial data $U_0$ as in \eqref{e:U0}, the signed distance function of $\Gamma_0$.

For a set $A\subseteq \R^d$ and $x\in \R^{d-1}$, let $A(x) = \{x_d| (x,x_d)\in A\}$.  Then for $M>>1$, we define the sets $E^{\pm(M)}_0$ by 
\begin{equation}
E_0^{\pm}(x) = \left\{\begin{array}{ll} E_0^{\pm}(x), & |x|\leq M \\ E_0^{\pm}(M\hat{x}), & M\leq |x|\leq M^2/2, \\ \frac{2(M^2-|x|)}{M^2}E_0^{\pm}(M\hat{x}), & M^2/2\leq |x|\leq M^2, \\ \{\pm x_d>0\}, & |x|\geq M^2 \end{array}\right. 
\end{equation}
where $\hat{x} = \displaystyle\frac{x}{|x|}$.  The sets $E_0^\pm$ are open and disjoint, and have modulus of continuity $1-\frac{\eta}{2}+Lr$ for $M$ sufficiently large.  Taking $\Gamma_0^{(M)} = \R^d\setminus (E_0^{-(M)} \cup E_0^{+(M)})$ and $U_0^{(M)}$ to be the signed distance function of $\Gamma_0^{(M)}$, we then have a unique viscosity solution $U^{(M)}(t,X)$ with $U^{(M)}(0,0) = 0$.

By Lemma \ref{l:compactness} we have that $(U^{(M)}(\cdot, \cdot))_{M}$ is precompact in $C_{loc}([0,\infty)\times \R^d)$.  By classical viscosity solution arguments, we have that any limit will also be a viscosity solution.  As $\lim\limits_{M\to \infty} U^{(M)}_0(X) = U_0(X)$, we have by the uniqueness of viscosity solutions that there is a sequence $M_k\to \infty$ such that $U^{(M_k)}(t,X)\to U(t,X)$ in $C_{loc}([0,\infty)\times \R^d)$.  

Now fix some time $t\geq 2T(d,s,L)$ and point $X_0\in \partial E_{t}^- = \partial\{U(t,\cdot)<0\}$.  Let $(\gamma_n)_{n=1}^\infty$ be such that $\displaystyle\frac{-\eta}{2\sqrt{1+L^2}}<\gamma_1<\gamma_2 <\ldots  <\gamma_n\to 0$, and let $X_n\in \Gamma_t^{\gamma_n} = \{U(t,\cdot)=\gamma_n\}$ be a point closest to $X_0$.  Without loss of generality, we can assume that $|X_1-X_0|\leq 1$, and hence
\begin{equation}
1\geq |X_n-X_0|\geq |X_{n+1}-X_0|\to 0.
\end{equation}
As $U^{(M_k)}(t,\cdot)\to U(t,\cdot)$ in $C_{loc}$, we can find a $k(n)\in \N$ such that 
\begin{equation}
|U^{(M_{k(n)})}(t,X)- U(t,X)| <-\gamma_n/2, \ |X-X_0|\leq n \quad \Rightarrow \quad -\eta<2\gamma_n<U^{(M_{k(n)})}(t,X_n) <\gamma_n/2.
\end{equation}
By Corollary \ref{c:decaycase} applied to level sets of $U^{(M_k)}$, we have that 
\begin{equation}
\{X_n-(z, z_d): z\in \R^{d-1},z_d\geq (1+L)|z|\}\cap B_{n}^d(X_0)\subseteq \{U^{(M_{k(n)})}(t,\cdot)\leq \gamma_n/2\}\cap B_{n}^d(X_0)\subseteq E_{t}^-.
\end{equation}
Thus taking the union it follows that 
\begin{equation}
\begin{split}
\{X_0-(z,z_d):& z\in \R^{d-1}, z_d>(1+L)|z|\}
\\&\subseteq \bigcup\limits_{n=1}^\infty \left(\{X_n-(z, z_d): z\in \R^{d-1},  z_d\geq (1+L)|z|\}\cap B_n^d(X_0)\right) \subseteq E_{t}^-.
\end{split}
\end{equation}
As $X_0\in \partial E_{t}^-$ was arbitrary, we thus have that $\partial E_{t}^-$ is a $(1+L)$-Lipschitz graph.  
\end{proof}

%%%%%%%%%%%%%%%%%%%%%%%%%%%%%%%%%%%%%%%%%%%%%%%%%%%%%%%%%%%%%%%%%%%%%%%Appendix                     																							%%%%%%%%%%%%%%%%%%%%%%%%%%%%%%%%%%%%%%%%%%%%%%%%%%%%%%%%%%%%%%%%%%%%%%

\begin{appendix}
\section{Viscosity Solutions}

In this appendix, we review the necessary definitions and essential existence/uniqueness results for defining weak solutions via the level set method.  For more details, we refer the reader to \cite{CyrilFract, Chambolle, CrystallineChambolle}, or \cite{EvansMCF1} for details about level set method for classical mean curvature flow.  

\begin{definition}\label{d:viscosity}
\hfill

\noindent 1).  An upper semicontinuous function $U:(0,T)\times \R^d\to \R$ is a viscosity subsolution to the level set equation 
\begin{equation}\label{e:viscositysub}
\partial_t U(t,X) \leq -H_s(X, \{U(t,\cdot)\geq U(t,X)\})|\nabla_XU(t,X)|
\end{equation}
if whenever $\Psi$ is a smooth test function such that crosses $U$ from above at $(t,X)$, then $\partial_t \Psi(t,X)\leq 0$ if $\nabla_X\Psi(t,X) = 0$ or else
\begin{equation}
\partial_t \Psi(t,X) \leq \left(\int\limits_{|Z|>\epsilon} \frac{\1^\pm_{\{U(t,\cdot)\geq U(t,X)\}}(X+Z)}{|Z|^{d+s}}dZ  + \int\limits_{|Z|<\epsilon} \frac{\1^\pm_{\{\Psi(t,\cdot)\geq \Psi(t,X)\}}(X+Z)}{|Z|^{d+s}}dZ\right)|\nabla_X\Psi(t,X)|,
\end{equation}
for any $\epsilon>0$.  

\bigskip
\noindent 2).  A lower semicontinuous function $U:(0,T)\times\R^d\to \R$ is a viscosity supersolution to the level set equation

\begin{equation}\label{e:viscositysuper}
\partial_t U(t,X) \geq -H_s(X, \{U(t,\cdot)> U(t,X)\})|\nabla_XU(t,X)|,
\end{equation}
if whenever $\Psi$ is a smooth test function such that crosses $U$ from below at $(t,X)$, then  $\partial_t \Psi(t,X)\geq 0$ if $\nabla_X\Psi(t,X) = 0$ or else
\begin{equation}
\partial_t \Psi(t,X) \geq \left(\int\limits_{|Z|>\epsilon} \frac{\1^\pm_{\{U(t,\cdot)> U(t,X)\}}(X+Z)}{|Z|^{d+s}}dZ  + \int\limits_{|Z|<\epsilon} \frac{\1^\pm_{\{\Psi(t,\cdot)> \Psi(t,X)\}}(X+Z)}{|Z|^{d+s}}dZ\right)|\nabla_X\Psi(t,X)|,
\end{equation}
for any $\epsilon>0$.  

3).  We say that a continuous function $U: (0,T)\times \R^d\to \R$ is a viscosity solution to the level set equation
\begin{equation}\label{e:viscosity}
\partial_t U(t,X) = -H_s(X, \{U(t,\cdot)\geq U(t,X)\})|\nabla_XU(t,X)|,
\end{equation}
if $U$ is both a subsolution and a supersolution.  

\end{definition}

\begin{theorem}\label{t:comparison} \cite{CyrilFract} Let $U_0\in \dot{W}^{1,\infty}(\R^d)$, and suppose that $U,V: [0,T]\times \R^d\to \R$ are sub/super solutions respectively to \eqref{e:viscositysub},\eqref{e:viscositysuper} with $U(0,X)\leq U_0(X)\leq V(0,X)$.  Then $U(t,X)\leq V(t,X)$ for all $(t,X)\in [0,T]\times \R^d$.  

In particular, there is a unique viscosity solution $U:[0,\infty)\times\R^d\to \R$ to \eqref{e:viscosity} with $U(0,X)=U_0(X)$.  Furthermore, 
\begin{equation}
||\nabla_XU||_{L^\infty([0,\infty)\times\R^d)} = ||\nabla_X U_0||_{L^\infty(\R^d)}.
\end{equation}
The sets 
\begin{equation}
\{U(t,\cdot)<0\}, \quad \{U(t,\cdot)=0\}, \quad \{U(t,\cdot)>0\}
\end{equation}
depend only on the initial sets
\begin{equation}
\{U_0<0\}, \qquad \{U_0 = 0\}, \quad \{U_0>0\}.
\end{equation}
\end{theorem}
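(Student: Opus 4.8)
The plan is to reduce everything to the \textbf{comparison principle}; uniqueness, the sharp gradient bound, and the statement that $\{U(t,\cdot)<0\}$, $\{U(t,\cdot)=0\}$, $\{U(t,\cdot)>0\}$ depend only on the corresponding sets of $U_0$ then follow by soft arguments, while existence is obtained by Perron's method once comparison is in hand. Throughout I would use that $U_0\in\dot{W}^{1,\infty}$: balls shrinking under \eqref{e:meancurvfloweqnset} (as in the proof of Lemma~\ref{l:compactness}) provide Lipschitz-in-space, $\tfrac{1}{1+s}$-H\"older-in-time sub- and supersolutions with the correct initial trace, so that $U-U_0$ and $U_0-V$ are bounded on $[0,T]\times\R^d$; this is what controls the behaviour at spatial infinity in the doubling argument below.

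For comparison, let $U$ be a subsolution of \eqref{e:viscositysub}, $V$ a supersolution of \eqref{e:viscositysuper}, $U(0,\cdot)\le U_0\le V(0,\cdot)$, and suppose $\sup_{[0,T]\times\R^d}(U-V)=:2m>0$. Replacing $U$ by $U_\lambda:=U-\lambda t$ makes the subsolution strictly dissipative in time (with unchanged spatial superlevel sets) at a loss $\le\lambda T$ in the supremum, so we may assume $\sup(U_\lambda-V)>m$. I would then double variables with penalization: for small $\eps,\alpha>0$ maximize $\Theta(t,X,Y)=U_\lambda(t,X)-V(t,Y)-\tfrac{|X-Y|^2}{2\eps}-\alpha(\langle X\rangle+\langle Y\rangle)$ over $[0,T]\times\R^d\times\R^d$. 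Boundedness of $U-U_0$ and $V_0-V$ forces the maximum at some $(\hat t,\hat X,\hat Y)$, the standard penalization lemma gives $|\hat X-\hat Y|^2/\eps\to0$, and the ordering of the data at $t=0$ together with $\sup\Theta>m/2$ forces $\hat t>0$ (the endpoint $t=T$ is handled by first working on $[0,T']$, $T'<T$). Because only $U_\lambda$ and $V$ depend on $t$ in $\Theta$, the touching functions $\Psi_U(X)=\tfrac{|X-\hat Y|^2}{2\eps}+\alpha\langle X\rangle+\mathrm{const}$ and $\Psi_V(Y)=-\tfrac{|\hat X-Y|^2}{2\eps}-\alpha\langle Y\rangle+\mathrm{const}$ are smooth test functions with vanishing time derivative and equal gradient $p=(\hat X-\hat Y)/\eps+O(\alpha)$, of size $|p|\le\|\nabla U_0\|_\infty+O(\alpha)$, and Hessian $O(1/\eps)$. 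The key structural fact is that the penalization $\tfrac{|X-Y|^2}{2\eps}$ is invariant under the \emph{simultaneous} translation $X\mapsto X+Z$, $Y\mapsto Y+Z$, so maximality of $\Theta$ upgrades to the global ordering $U_\lambda(\hat t,\hat X+Z)-V(\hat t,\hat Y+Z)\le U_\lambda(\hat t,\hat X)-V(\hat t,\hat Y)+O(\alpha)$ for all $Z$, whence $\{U_\lambda(\hat t,\cdot)\ge U_\lambda(\hat t,\hat X)\}-\hat X\subseteq\{V(\hat t,\cdot)>V(\hat t,\hat Y)\}-\hat Y$ up to an $O(\alpha)$ error and a routine strict/non-strict adjustment. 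Assuming $p\ne0$, split each nonlocal operator at a radius $\eps_0$ chosen small relative to $\eps$: the singular near parts are $o_\eps(1)$ since the Hessian bound on $\Psi_U,\Psi_V$ keeps their superlevel sets within $O(|Z|^2/\eps)$ of a half-space, and the far parts are ordered by the inclusion above. Subtracting the two viscosity inequalities and using $\partial_t\Psi_U=\partial_t\Psi_V=0$ at $\hat t\in(0,T)$ gives $0\le(\text{subsolution RHS})-\lambda$ and $0\ge(\text{supersolution RHS})$, hence $\lambda\le(\text{subsolution RHS})-(\text{supersolution RHS})\le O(\alpha)+o_\eps(1)$, contradicting $\lambda>0$ for $\alpha,\eps$ small. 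The degenerate case $p=0$, i.e.\ $\hat X=\hat Y$, is the familiar geometric subtlety: there the curvature contribution vanishes in the limit by the $|Z|^2$ smallness of the level-set perturbation, or one replaces the quadratic by a test function with forced nonzero gradient.

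With comparison available, uniqueness is immediate since a solution is both a sub- and a supersolution. For the gradient bound, translation invariance of the flow (Proposition~\ref{p:fracmeancurvbasics}) makes $X\mapsto U(t,X+h)$ the solution with data $U_0(\cdot+h)$, so comparison yields $\|U(t,\cdot+h)-U(t,\cdot)\|_{\infty}\le\|U_0(\cdot+h)-U_0\|_{\infty}\le\|\nabla U_0\|_{\infty}|h|$ for every $t$, hence $\|\nabla_X U\|_{L^\infty([0,\infty)\times\R^d)}\le\|\nabla U_0\|_{L^\infty}$; equality holds because $U(0,\cdot)=U_0$. For the geometric statement I would first check that if $\theta:\R\to\R$ is continuous and nondecreasing then $\theta\circ U$ is again a viscosity solution — the equation sees $U$ only through its superlevel sets and $\nabla U$ only through its direction. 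Given $U_0,V_0$ with identical sign decompositions, pick continuous nondecreasing $\theta,\tilde\theta$ with $\theta(0)=\tilde\theta(0)=0$, $\theta\circ U_0\le V_0$, $\tilde\theta\circ V_0\le U_0$; applying comparison to $\theta\circ U$ versus $V$ and to $\tilde\theta\circ V$ versus $U$ sandwiches the two evolutions and pins down $\{U(t,\cdot)<0\}$, $\{U(t,\cdot)=0\}$, $\{U(t,\cdot)>0\}$. Existence follows from Perron's method: let $W$ be the supremum of all subsolutions lying below the ball/linear supersolution barrier built from $U_0$; the standard bump-function argument shows the upper envelope $W^{*}$ is a subsolution and the lower envelope $W_{*}$ a supersolution, comparison forces $W^{*}\le W_{*}$, so $W=W^{*}=W_{*}$ is continuous and solves \eqref{e:viscosity} with the prescribed data.

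The step I expect to be the main obstacle is the comparison argument, specifically matching the two nonlocal curvature terms at the doubled maximum: one must at once absorb the genuinely singular near-field contribution via the smoothness (Hessian bound) of the quadratic test functions, exploit the translation-compatibility of the quadratic penalization so that a pointwise ordering of $U$ and $V$ transfers into an inclusion of superlevel sets and hence a sign for the far-field integrals, and treat the geometric degeneracy where the test gradient vanishes. The linear growth of $U_0$ at infinity is a secondary nuisance, neutralized by the boundedness of $U-U_0$ and $V_0-V$ together with an arbitrarily slowly growing penalization whose contribution to the nonlocal operator is $O(\alpha)$.
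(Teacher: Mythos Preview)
The paper does not prove Theorem~\ref{t:comparison}; it is quoted from \cite{CyrilFract} (Imbert's level-set approach to fractional mean curvature flow) and stated without proof in the appendix, so there is no in-paper argument to compare against. Your sketch follows the standard route of that reference: doubling of variables with a quadratic penalization, exploiting that the penalization is invariant under the diagonal translation $(X,Y)\mapsto(X+Z,Y+Z)$ to turn the pointwise ordering at the maximum into an inclusion of superlevel sets, and then splitting the nonlocal curvature into a near part controlled by the $C^{1,1}$ test functions and a far part controlled by that inclusion. The consequences you derive (uniqueness, the Lipschitz bound via translation invariance and comparison, invariance under monotone relabeling, Perron existence) are all handled in the expected way.

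One genuine technical gap in your write-up: you treat the touching test functions $\Psi_U,\Psi_V$ as functions of $X$ (resp.\ $Y$) only, with ``vanishing time derivative'', and then apply the viscosity inequalities directly. But $U_\lambda$ and $V$ share the \emph{same} time variable in $\Theta$, so at the joint maximum you cannot split off separate smooth test functions in $(t,X)$ and $(t,Y)$ without either (i) also doubling in time via a term $|t-\tau|^2/(2\delta)$ and letting $\delta\to0$, or (ii) invoking the parabolic Ishii lemma. The paper itself uses exactly device (i) in the proof of Lemma~\ref{l:viscositydoublesmooth}, and \cite{CyrilFract} does the same. This is routine but should be stated; without it the step ``$\partial_t\Psi_U=\partial_t\Psi_V=0$'' is not justified. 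A smaller point: your reduction of the geometric statement to the existence of monotone $\theta,\tilde\theta$ with $\theta\circ U_0\le V_0$ and $\tilde\theta\circ V_0\le U_0$ is the right idea, but for merely Lipschitz (unbounded) data on $\R^d$ the existence of such $\theta$ with $\theta(0)=0$ is not automatic; the cleanest way is to argue level-by-level using the invariance of the equation under monotone relabeling, as in \cite{CyrilFract}.
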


\begin{definition}\label{d:viscosityset}
Let $(E_0^-, \Gamma_0, E_0^+)$ be such that $E_0^\pm$ are open, $\Gamma_0$ is closed, all are mutually disjoint with $E_0^-\cup\Gamma_0\cup E_0^+ = \R^d$.  Then we define the viscosity solution to fractional mean curvature flow \eqref{e:meancurvfloweqnset} as follows.  Let $U_0\in \dot{W}^{1,\infty}(\R^d)$ be such that that 
\begin{equation}
E_0^- = \{U_0<0\}, \quad \Gamma_0 = \{U_0 = 0\}, \quad E_0^+ = \{U_0>0\},
\end{equation}
and $U(t,X)$ be the unique viscosity solution to \eqref{e:viscosity} with initial data $U_0$.  Then the sets 
\begin{equation}
E_t^- = \{U(t,\cdot)<0\}, \quad \Gamma_t= \{U(t,\cdot) = 0\}, \quad E_t^+ = \{U(t,\cdot)>0\},
\end{equation}
are independent of the choice of $U_0$.   We call $t\to (E_t^-, \Gamma_t, E_t^+)$ the viscosity solution to fractional mean curvature flow. The flows $t\to E_t^-\cup\Gamma_t$ and $t\to E_t^-$ are called the maximal subsolution and minimal superoslution of the flow \eqref{e:meancurvfloweqnset} respectively.   
\end{definition}

The names maximal subsolution and minimal supersolution are quite natural for $E_t^-\cup \Gamma_t$ and $E_t^-$, as in fact

\begin{theorem} \label{t:subsuper}\cite{CyrilFract}
Let $(E_t^-, \Gamma_t, E_t^+)$ be the viscosity solution of fractional mean curvature flow for the initial data $(E_0^-, \Gamma_0, E_0^+)$.  Then the indicator functions $\1_{E_t^-\cup \Gamma_t},\1_{E_t^-}$ are the maximal subsolution  and minimal supersolution to \eqref{e:viscositysub},\eqref{e:viscositysuper} for the initial data $U_0 = \1_{E_0^-\cup \Gamma_0}$ and $U_0= \1_{E_0^-}$ respectively.  
\end{theorem}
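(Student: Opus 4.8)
The plan is to exploit the geometric (relabeling-invariant) character of the level set equation \eqref{e:viscosity} and reduce the statement to a smooth-composition principle together with stability of nonlocal viscosity sub/supersolutions under half-relaxed limits. Fix $U_0\in\dot W^{1,\infty}(\R^d)$ with $\{U_0<0\}=E_0^-$, $\{U_0=0\}=\Gamma_0$, $\{U_0>0\}=E_0^+$ (e.g.\ a truncated signed distance to $\Gamma_0$), let $U$ be the unique viscosity solution of \eqref{e:viscosity}, and set $V:=-U$. By Theorem \ref{t:comparison} the triple $(E_t^-,\Gamma_t,E_t^+)$ is independent of this choice, and $\1_{E_t^-\cup\Gamma_t}=\1_{\{U(t,\cdot)\le 0\}}=\1_{\{V(t,\cdot)\ge 0\}}$, while $\1_{E_t^-}=\1_{\{V(t,\cdot)>0\}}$. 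So it suffices to understand indicators of super\-level sets of the solution $V$.

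First I would record two invariances of \eqref{e:viscosity}. (a) $V=-U$ is again a viscosity solution: a smooth $\Psi$ touching $V$ from above at $(t_0,X_0)$ corresponds to $-\Psi$ touching $U$ from below there, $\{U(t_0,\cdot)>U(t_0,X_0)\}$ is the complement of $\{V(t_0,\cdot)\ge V(t_0,X_0)\}$, and the identity $H_s(X,\mathcal C E)=-H_s(X,E)$ (immediate from $\1^\pm_{\mathcal CE}=-\1^\pm_E$, read in the $\eps$-split principal value sense of Definition \ref{d:viscosity}) turns the supersolution inequality for $U$ into the subsolution inequality for $V$; the reverse is symmetric. (b) If $\phi:\R\to\R$ is smooth and strictly increasing then $\phi\circ V$ is again a viscosity solution: given $\Psi$ touching $\phi\circ V$ from above at $(t_0,X_0)$, the function $\phi^{-1}\circ\Psi$ touches $V$ from above there, the positive factor $(\phi^{-1})'(\Psi)$ cancels on both sides of the (localized) subsolution inequality, $\nabla_X\Psi=0$ iff $\nabla_X(\phi^{-1}\circ\Psi)=0$, and crucially $\{\phi(V(t_0,\cdot))\ge\phi(V(t_0,X_0))\}=\{V(t_0,\cdot)\ge V(t_0,X_0)\}$ because $\phi$ is \emph{strictly} increasing, so the nonlocal term is unchanged; the supersolution half is symmetric.

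Next, choose for each $n$ a smooth strictly increasing $\theta_n:\R\to(0,1)$ with $\theta_n\to\1_{[0,\infty)}$ pointwise on $\R$ and $\theta_n(0)\to1$ (steepening sigmoids centered slightly to the left of $0$), and likewise $\tilde\theta_n:\R\to(0,1)$ with $\tilde\theta_n\to\1_{(0,\infty)}$ pointwise on $\R\setminus\{0\}$ and $\tilde\theta_n(0)\to0$. By (a) and (b), $\theta_n\circ V$ and $\tilde\theta_n\circ V$ are viscosity solutions of \eqref{e:viscosity} with Lipschitz initial data. A direct computation of the upper relaxed limit gives $\limsup^*_n(\theta_n\circ V)=\1_{\{V(t,\cdot)\ge 0\}}=\1_{E_t^-\cup\Gamma_t}$: if $V(t,X)>0$ then $V>\eps$ on a neighbourhood, so $\theta_n\circ V\le\theta_n(\eps)\to0$ there; if $V(t,X)\le 0$ the constant sequence $Y=X$ gives $\theta_n(V(t,X))\ge\theta_n(0)\to1$. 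Symmetrically $\liminf_{*,n}(\tilde\theta_n\circ V)=\1_{\{V(t,\cdot)>0\}}=\1_{E_t^-}$. By stability of nonlocal viscosity subsolutions (resp.\ supersolutions) under upper (resp.\ lower) half-relaxed limits — the standard argument for \eqref{e:viscositysub}--\eqref{e:viscositysuper}, splitting the nonlocal term at $\eps$ and handling the far part by dominated convergence and the near part by the uniform test-function bound, see \cite{CyrilFract,Chambolle} — it follows that $\1_{E_t^-\cup\Gamma_t}$ is a subsolution to \eqref{e:viscositysub} and $\1_{E_t^-}$ a supersolution to \eqref{e:viscositysuper}, with the asserted initial data. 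For maximality, let $v$ be any subsolution of \eqref{e:viscositysub} with $v(0,\cdot)\le\1_{E_0^-\cup\Gamma_0}$; the Lipschitz solution $W_n:=\theta_n\circ V+\tfrac1n$ satisfies $v(0,\cdot)\le W_n(0,\cdot)$ (on $E_0^-\cup\Gamma_0$ use $\theta_n\circ V(0,\cdot)\ge\theta_n(0)\ge1-\tfrac1n$, on $E_0^+$ use $v(0,\cdot)\le0<\tfrac1n$), so Theorem \ref{t:comparison} yields $v\le W_n$ for all $t$, and $n\to\infty$ gives $v(t,\cdot)\le\1_{E_t^-\cup\Gamma_t}$; the dual argument with $\tilde\theta_n\circ V-\tfrac1n$ (arranging $\tilde\theta_n(0)\le\tfrac1n$) shows $\1_{E_t^-}$ is the minimal supersolution.

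The main obstacle is really the combination of (b) with the half-relaxed-limit stability: one must check that the \emph{nonlocal} level set operator genuinely survives monotone relabeling — which is why strict monotonicity of $\phi$ and the complement identity for the sign flip are needed — and that passing to the discontinuous limit of a family of honest solutions preserves the sub/supersolution inequalities despite the singular integral. This is precisely where the nonlocal structure, rather than a purely local second-order operator, has to be treated with care, through the $\eps$-truncated formulation of Definition \ref{d:viscosity}; everything else is bookkeeping with the comparison principle of Theorem \ref{t:comparison}.
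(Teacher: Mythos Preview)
The paper does not prove this theorem; it is stated in the appendix with a citation to \cite{CyrilFract} and no proof is supplied, so there is nothing to compare against directly. Your sketch is a reasonable reconstruction of the standard argument: geometric (relabeling) invariance of the level set equation, followed by half-relaxed-limit stability to pass from $\theta_n\circ V$ to the indicator, then comparison with the Lipschitz solutions $\theta_n\circ V\pm\tfrac1n$ for extremality. This is essentially how results of this type are obtained in \cite{CyrilFract,Chambolle}.

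Two small corrections. First, in your computation of $\limsup^*_n(\theta_n\circ V)$ the cases are swapped: when $V(t,X)<0$ (not $>0$) one has $V<-\eps$ on a neighborhood and $\theta_n\circ V\le\theta_n(-\eps)\to 0$; when $V(t,X)\ge 0$ (not $\le 0$) the constant sequence gives $\theta_n(V(t,X))\ge\theta_n(0)\to 1$. As written your inequalities go the wrong way. Second, for (b) you need $\phi'>0$ everywhere (not merely strictly increasing) so that $\phi^{-1}\circ\Psi$ is again a $C^{1,1}$ test function; this is harmless since you may choose your sigmoids that way. With these fixes the argument is sound, and the invocation of Theorem~\ref{t:comparison} for maximality is legitimate because $W_n(0,\cdot)=\theta_n\circ V(0,\cdot)+\tfrac1n$ is itself Lipschitz and can serve as the intermediate $U_0$.
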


While the comparison principle on the level of sets is proven in \cite{CyrilFract}, for our purposes it is more useful to apply it to just the minimal superosolutions.  Hence, 

\begin{proposition}\label{p:comparisonprinciple} (Comparison Principle)
Let $E_0, F_0\subseteq \R^d$ be open sets with $E_0\subseteq F_0$, and $E_t, F_t$ be the minimal viscosity supersolutions of the flow.  Then $E_t\subseteq F_t$ for all times $t\geq 0$.  
\end{proposition}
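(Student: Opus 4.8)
The plan is to deduce this set-level comparison from the comparison principle for the level set equation, Theorem~\ref{t:comparison}, by representing $E_0$ and $F_0$ by correctly ordered Lipschitz level set functions. First I would take truncated signed distance functions: let $U_0$ be defined as in \eqref{e:U0} for the canonical triple $(E_0,\partial E_0,\R^d\setminus\overline{E_0})$, and $V_0$ the analogous function for $(F_0,\partial F_0,\R^d\setminus\overline{F_0})$, so explicitly
\[
U_0(X)=\begin{cases}\min\{d(X,\partial E_0),1\}, & X\in\R^d\setminus\overline{E_0},\\ 0, & X\in\partial E_0,\\ \max\{-d(X,\partial E_0),-1\}, & X\in E_0,\end{cases}
\]
and similarly for $V_0$. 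Each of these is $1$-Lipschitz, hence lies in $\dot{W}^{1,\infty}(\R^d)$, and $\{U_0<0\}=E_0$, $\{V_0<0\}=F_0$. Letting $U$ and $V$ denote the unique viscosity solutions of \eqref{e:viscosity} with these initial data, Definition~\ref{d:viscosityset} identifies the minimal viscosity supersolutions as $E_t=\{U(t,\cdot)<0\}$ and $F_t=\{V(t,\cdot)<0\}$.

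The next step is the elementary observation that $V_0\le U_0$ pointwise on $\R^d$. Since $E_0\subseteq F_0$ we have $\R^d\setminus F_0\subseteq\R^d\setminus E_0$, hence $d(\cdot,\R^d\setminus F_0)\ge d(\cdot,\R^d\setminus E_0)$ and $d(\cdot,F_0)\le d(\cdot,E_0)$. Splitting $\R^d$ into the three regions $E_0$, $F_0\setminus E_0$, and $\R^d\setminus F_0$, and using that $d(X,\partial E_0)=d(X,\R^d\setminus E_0)$ when $X\in E_0$ (and the analogous identity for $F_0$), one checks $V_0(X)\le U_0(X)$ in each case; on the middle region this is simply $V_0(X)\le 0\le U_0(X)$.

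To conclude, note that $U$ is in particular a viscosity supersolution of \eqref{e:viscositysuper} and $V$ a viscosity subsolution of \eqref{e:viscositysub}, with $V(0,\cdot)=V_0\le U_0=U(0,\cdot)$. Applying Theorem~\ref{t:comparison} on each interval $[0,T]$ — with $V$ as the subsolution, $U$ as the supersolution, and $U_0$ (say) as the intermediate datum — gives $V(t,X)\le U(t,X)$ for all $(t,X)\in[0,\infty)\times\R^d$. Therefore $\{U(t,\cdot)<0\}\subseteq\{V(t,\cdot)<0\}$, i.e.\ $E_t\subseteq F_t$, for every $t\ge 0$, as claimed.

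There is no real obstacle beyond bookkeeping here: the analytic content is entirely contained in Theorem~\ref{t:comparison} from \cite{CyrilFract}. The only points requiring (routine) care are that the chosen level set functions are admissible, i.e.\ lie in $\dot{W}^{1,\infty}(\R^d)$, that they genuinely represent $E_0$ and $F_0$ as their strict sublevel sets, and that they are correctly ordered — all of which the truncated signed distance construction delivers immediately, the ordering being nothing more than monotonicity of the distance-to-a-set functional under set inclusion.
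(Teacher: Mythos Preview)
Your argument is correct, but it takes a different route from the paper. The paper works directly with indicator functions: since $E_0\subseteq F_0$ we have $\1_{E_0}\leq\1_{F_0}$; by Theorem~\ref{t:subsuper}, $\1_{F_t}$ is a viscosity supersolution of \eqref{e:viscositysuper} with initial data $\1_{F_0}\geq\1_{E_0}$, and $\1_{E_t}$ is the \emph{minimal} supersolution for the data $\1_{E_0}$, so $\1_{E_t}\leq\1_{F_t}$ for all $t$. You instead pass through continuous (Lipschitz) level set functions, namely truncated signed distances, and invoke the PDE comparison principle Theorem~\ref{t:comparison} directly. Both approaches ultimately rest on results imported from \cite{CyrilFract}. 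The paper's route is a two-line application of the minimality characterization and avoids any pointwise computation; your route is slightly longer because you must verify the ordering $V_0\leq U_0$ by cases, but it has the minor advantage of staying entirely within the framework of continuous level set functions and not invoking Theorem~\ref{t:subsuper} at all.
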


\begin{proof}
Note that as $E_0\subseteq F_0$, 
\begin{equation}
\1_{E_0}(X)\leq \1_{F_0}(X).  
\end{equation}
Thus as $\1_{E_t}$ is the minimal supersolution of \eqref{e:viscositysuper} with respect to the initial data $U_0 = \1_{E_0}$ and $\1_{F_t}$ is a supersolution, we have by minimality that 
\begin{equation}
\1_{E_t}(X)\leq \1_{F_t}(X),
\end{equation}
for all $t$, and hence $E_t\subseteq F_t$.  
\end{proof}

\begin{proposition}\label{p:fracmeancurvbasics} Basic properties of fractional mean curvature:

Let $E\subseteq \R^d$ with $0\in \partial E$ and $H_s(0,E)$ well defined.  

\begin{enumerate}
\item Translation invariance: for any $Y\in \R^d$, $H_s(Y, E+Y) = H_s(0,E)$.  

\item Symmetry: $H_s(0,-E) = H_s(0,E)$

\item Scaling: for any $r>0$, $r^sH_s(0, rE) =H_s(0,E)$.  

\item Monotonicty: if $E\subseteq F$ is with $0\in \partial F$, then $H_s(0,F)\leq H_s(0,E)$.  
\end{enumerate}

\end{proposition}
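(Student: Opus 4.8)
The statement to prove is Proposition~\ref{p:fracmeancurvbasics}, listing four basic properties of the $s$-fractional mean curvature: translation invariance, symmetry, scaling, and monotonicity. Each follows from the integral formula \eqref{e:smeancurvature}, namely
\[
H_s(X,E) = -s(1-s)\,P.V.\int\limits_{\R^d}\frac{\1_E^\pm(X+Z)}{|Z|^{d+s}}\dd Z,
\]
so the plan is simply to perform the appropriate change of variables in $Z$ for each item and track how the set membership transforms.

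\begin{proof}
Throughout, write $H_s(0,E) = -s(1-s)\,P.V.\int_{\R^d}\frac{\1_E^\pm(Z)}{|Z|^{d+s}}\dd Z$, using that $0\in\partial E$.

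\emph{(1) Translation invariance.} For $Y\in\R^d$ we have $0\in\partial E$ iff $Y\in\partial(E+Y)$, and $\1_{E+Y}^\pm(Y+Z) = \1_E^\pm(Z)$ since $Y+Z\in E+Y \iff Z\in E$. Hence
\[
H_s(Y,E+Y) = -s(1-s)\,P.V.\int\limits_{\R^d}\frac{\1_{E+Y}^\pm(Y+Z)}{|Z|^{d+s}}\dd Z = -s(1-s)\,P.V.\int\limits_{\R^d}\frac{\1_E^\pm(Z)}{|Z|^{d+s}}\dd Z = H_s(0,E).
\]

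\emph{(2) Symmetry.} Apply the change of variables $Z\mapsto -Z$, which preserves Lebesgue measure and $|Z|$, and is compatible with the principal value (the excised balls $B_\epsilon$ are symmetric). Since $-Z\in -E \iff Z\in E$, we get $\1_{-E}^\pm(-Z) = \1_E^\pm(Z)$, so substituting $W=-Z$,
\[
H_s(0,-E) = -s(1-s)\,P.V.\int\limits_{\R^d}\frac{\1_{-E}^\pm(W)}{|W|^{d+s}}\dd W = -s(1-s)\,P.V.\int\limits_{\R^d}\frac{\1_{-E}^\pm(-Z)}{|Z|^{d+s}}\dd Z = H_s(0,E).
\]

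\emph{(3) Scaling.} For $r>0$, substitute $Z = rW$, so $\dd Z = r^d\dd W$ and $|Z|^{d+s} = r^{d+s}|W|^{d+s}$. Since $rW\in rE \iff W\in E$, we have $\1_{rE}^\pm(rW) = \1_E^\pm(W)$, and the excised region $\{|Z|<\epsilon\}$ becomes $\{|W|<\epsilon/r\}$, still a ball about the origin, so the principal value is respected. Thus
\[
H_s(0,rE) = -s(1-s)\,P.V.\int\limits_{\R^d}\frac{\1_{rE}^\pm(Z)}{|Z|^{d+s}}\dd Z = -s(1-s)\,P.V.\int\limits_{\R^d}\frac{\1_E^\pm(W)\,r^d}{r^{d+s}|W|^{d+s}}\dd W = r^{-s}H_s(0,E),
\]
which rearranges to $r^s H_s(0,rE) = H_s(0,E)$.

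\emph{(4) Monotonicity.} Suppose $E\subseteq F$ with $0\in\partial E\cap\partial F$. Then $\1_E^\pm \leq \1_F^\pm$ pointwise, equivalently $\1_F^\pm - \1_E^\pm = 2\,\1_{F\setminus E}\geq 0$. Away from the origin this is a nonnegative integrable perturbation (it contributes no singularity at $0$ since it vanishes near $0$ on the relevant set — more precisely, the difference of the two principal value integrals equals the absolutely convergent integral of $\1_F^\pm - \1_E^\pm$ against $|Z|^{-d-s}$, as both $H_s(0,E)$ and $H_s(0,F)$ are assumed well defined). Hence
\[
H_s(0,E) - H_s(0,F) = s(1-s)\int\limits_{\R^d}\frac{\1_F^\pm(Z) - \1_E^\pm(Z)}{|Z|^{d+s}}\dd Z = 2s(1-s)\int\limits_{\R^d}\frac{\1_{F\setminus E}(Z)}{|Z|^{d+s}}\dd Z \geq 0,
\]
so $H_s(0,F)\leq H_s(0,E)$.
\end{proof}

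The only genuinely delicate point is the interplay between the principal value and the manipulations: for (1)--(3) the excised balls map to balls (or translates thereof) under the substitution, so the $P.V.$ is preserved verbatim; for (4) one must argue that subtracting the two well-defined principal values yields an ordinary convergent integral of the (sign-definite) difference of indicators, after which nonnegativity is immediate. I expect that bookkeeping — and stating cleanly what ``$H_s$ well defined'' supplies — to be the main thing to be careful about; everything else is a one-line change of variables.
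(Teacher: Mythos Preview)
Your proof is correct and follows the same approach as the paper: items (1)--(3) by a change of variables in the defining integral, and item (4) from the pointwise inequality $\1_E^\pm \leq \1_F^\pm$. The paper's proof is a two-line sketch of exactly these steps; you have simply spelled out the bookkeeping (especially regarding the principal value) in more detail than the paper does.
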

\begin{proof}
1-3. follow by making a simple change of variables to 
\begin{equation}
P.V.\int\limits_{\R^d} \frac{\1_E^\pm(Z)}{|Z|^{d+s}}dZ.
\end{equation}
4. follows from noting that $\1^\pm_E(Z)\leq \1^\pm_F(Z)$ for all $Z\in \R^d$.  
\end{proof}

\begin{corollary}\label{c:fracmeancurvflowrescaling}
Let $U(t,X)$ be a viscosity subsolution to \eqref{e:viscositysub}.  Then so is 
\begin{enumerate}
\item $U(t,X+Y)$ for fixed $Y\in \R^d$,
\item $U(t,-X)$,
\item $U(r^{1+s}t, rX)$ for $r>0$.
\end{enumerate}
\end{corollary}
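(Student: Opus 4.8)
The plan is to handle all three transformations with the same device. In each case, suppose $\Psi$ is a smooth test function that crosses the transformed function from above at some point $(t_0,X_0)$; pull $\Psi$ back through the corresponding symmetry of spacetime to obtain a test function that crosses $U$ itself from above at the image point; apply the subsolution part of Definition \ref{d:viscosity} there; and then push the resulting inequality back forward. Because each of the three transformations is a diffeomorphism of $(0,T)\times\R^d$ (allowing for the time rescaling in part 3), the property of crossing from above is preserved under pullback, so the only real work is to record how the three ingredients of the subsolution inequality --- the time derivative $\partial_t\Psi$, the gradient norm $|\nabla_X\Psi|$, and the two truncated kernel integrals --- transform, and to check that the scaling behaviour of $H_s$ recorded in Proposition \ref{p:fracmeancurvbasics} makes everything balance.

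Parts 1 and 2 should be essentially mechanical. For $V(t,X)=U(t,X+Y)$, the pullback of a test function $\Psi$ is $(t,X)\mapsto\Psi(t,X-Y)$, which crosses $U$ from above at $(t_0,X_0+Y)$; the time derivative and the gradient norm are unchanged, the superlevel sets entering the kernel integrals are translates of one another, and the integrals themselves are unaffected after the trivial relabelling of the integration variable, so the subsolution inequality for $U$ transcribes verbatim into the one for $V$. For $V(t,X)=U(t,-X)$ the pullback is $(t,X)\mapsto\Psi(t,-X)$, which crosses $U$ from above at $(t_0,-X_0)$; here $\nabla_X\Psi$ acquires a sign but $|\nabla_X\Psi|$ does not, the relevant superlevel sets are reflections of one another, and the substitution $Z\mapsto-Z$ in the kernel integrals preserves both $|Z|$ and the truncation regions $\{|Z|<\epsilon\}$ and $\{|Z|>\epsilon\}$, so again the inequality transfers directly. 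This step uses nothing beyond the symmetry statement in Proposition \ref{p:fracmeancurvbasics}.

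Part 3 is where one must be careful. Given $V(t,X)=U(r^{1+s}t,rX)$ and a test function $\Psi$ crossing $V$ from above at $(t_0,X_0)$, I would set $\Phi(\tau,W):=\Psi(r^{-(1+s)}\tau,\,r^{-1}W)$, which crosses $U$ from above at $(\tau_0,W_0):=(r^{1+s}t_0,\,rX_0)$. A short computation gives $\partial_\tau\Phi(\tau_0,W_0)=r^{-(1+s)}\partial_t\Psi(t_0,X_0)$ and $|\nabla_W\Phi(\tau_0,W_0)|=r^{-1}|\nabla_X\Psi(t_0,X_0)|$; in particular the two gradients vanish together, and when they do the subsolution property of $U$ forces $\partial_\tau\Phi(\tau_0,W_0)\le0$, hence $\partial_t\Psi(t_0,X_0)\le0$, which is what is needed. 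When the gradients are nonzero, I would fix an arbitrary $\epsilon>0$, apply the subsolution inequality for $U$ at $(\tau_0,W_0)$ with truncation radius $r\epsilon$, and then substitute $Z=rZ'$ in both kernel integrals. Since $W_0=rX_0$ and $\tau_0=r^{1+s}t_0$ one has $U(\tau_0,W_0+rZ')=V(t_0,X_0+Z')$ and $\Phi(\tau_0,W_0+rZ')=\Psi(t_0,X_0+Z')$, so the superlevel sets centred at $W_0$ are exactly $r$ times those centred at $X_0$; and since $\mathrm{d}Z/|Z|^{d+s}$ scales by $r^{-s}$ while $\{|Z|>r\epsilon\}$ becomes $\{|Z'|>\epsilon\}$, the bracketed kernel term for $U$ equals $r^{-s}$ times the corresponding term for $V$ --- which is precisely the content of the scaling identity in Proposition \ref{p:fracmeancurvbasics}. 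Collecting powers of $r$, the inequality for $U$ becomes $r^{-(1+s)}\partial_t\Psi(t_0,X_0)\le r^{-s}\cdot(\text{kernel term for }V)\cdot r^{-1}|\nabla_X\Psi(t_0,X_0)|$, and multiplying through by $r^{1+s}$ yields exactly the subsolution inequality for $V$ at $(t_0,X_0)$ with truncation radius $\epsilon$. As $\epsilon>0$ was arbitrary, $V$ is a subsolution.

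I expect the only genuine obstacle to be the bookkeeping in part 3: one must keep straight that space dilates by $r$ while time dilates by $r^{1+s}$, verify that the factor $r^{-s}$ produced by the kernel's change of variables is precisely what cancels the mismatch between these two powers, and observe that because the truncation radius in Definition \ref{d:viscosity} is quantified over all positive values, replacing $\epsilon$ by $r\epsilon$ in passing between the two pictures costs nothing. Parts 1 and 2 should present no real difficulty.
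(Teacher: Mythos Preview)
Your argument is correct and is exactly the computation the paper has in mind: the corollary is stated without proof, immediately after Proposition~\ref{p:fracmeancurvbasics}, because pulling back a test function through the relevant symmetry and invoking the translation/reflection/scaling identities for $H_s$ is the intended (and only natural) route. Your bookkeeping in part~3---matching the $r^{-(1+s)}$ from $\partial_\tau$ against $r^{-1}$ from $|\nabla_W\Phi|$ times $r^{-s}$ from the change of variables $Z=rZ'$ in the kernel, and exploiting that the truncation radius $\epsilon$ is universally quantified---is precisely right.
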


\begin{lemma} \label{l:fractmeancurvsemicont}
Suppose that $F_n\in C_{loc}((0,T); W^{2,\infty}(\R^d))$ is uniformly bounded and $F_n\to F$ in $C_{loc}((0,T)\times \R^d)$.  Then for any $(t,X)\in (0,T)\times \R^d$, 
\begin{equation}
\limsup\limits_{n\to \infty} -H_s(X,\{F_n(t,\cdot)\geq F_n(t,X)\})\leq -H_s(X,\{F(t,\cdot)\geq F(t,X)\})
\end{equation}

In particular, for fixed $F\in C((0,T); W^{2,\infty}(\R^d))$ the function 
\begin{equation}
(t,X)\to -H_s(X, \{F(t,\cdot)\geq F(t,X)\}),
\end{equation}
is upper semicontinuous.  
\end{lemma}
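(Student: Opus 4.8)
The statement is Lemma \ref{l:fractmeancurvsemicont}, which asserts an upper semicontinuity property for the (negative) fractional mean curvature of superlevel sets. The plan is to reduce the limsup statement to Fatou's lemma applied to the defining integral, after splitting off the singular part of the kernel. Recall that for a set $A$ with $0\in\partial A$ we may write, for any $\rho>0$,
\begin{equation}
-H_s(X,A) = s(1-s)\left(\int\limits_{|Z|>\rho}\frac{\1_A^\pm(X+Z)}{|Z|^{d+s}}dZ + P.V.\int\limits_{|Z|<\rho}\frac{\1_A^\pm(X+Z)}{|Z|^{d+s}}dZ\right),
\end{equation}
and the second integral is controlled by the $W^{2,\infty}$ bound of a defining function: if $A = \{F(t,\cdot)\geq F(t,X)\}$ with $\|F(t,\cdot)\|_{W^{2,\infty}}\leq \Lambda$ and $\nabla_X F(t,X)\neq 0$, then $\partial A$ near $X$ is a $C^{1,1}$ graph with curvature bounds depending only on $\Lambda$ and $|\nabla_X F(t,X)|$, so the principal value integral over $|Z|<\rho$ is bounded by $C(\Lambda)\rho^{1-s}$ uniformly.

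The key steps, in order: First, fix $(t,X)$ and pass to a subsequence realizing the $\limsup$. Second, handle the degenerate case $\nabla_X F(t,X)=0$ separately — here one shows $\{F_n(t,\cdot)\geq F_n(t,X)\}$ has vanishing ``density slope'' so the curvature term is $-\infty$ in the relevant sense, or rather the relevant inequality is automatic from the viscosity-solution convention (this matches the $\partial_t\Psi\leq 0$ branch in Definition \ref{d:viscosity}); I'd phrase it by noting the right-hand side is interpreted as $0$ and the left as $-\infty$ or by a direct estimate. Third, in the nondegenerate case, use the uniform $C^{1,1}$ structure to bound the near-singular integrals uniformly in $n$ by $C\rho^{1-s}$, so it suffices to prove the claim for the truncated integral over $|Z|>\rho$ and then let $\rho\to 0$. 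Fourth, for the truncated integral, observe that $F_n\to F$ locally uniformly and $F_n(t,X)\to F(t,X)$ force $\1^\pm_{\{F_n(t,\cdot)\geq F_n(t,X)\}}(X+Z)\to \1^\pm_{\{F(t,\cdot)\geq F(t,X)\}}(X+Z)$ for every $Z$ with $F(t,X+Z)\neq F(t,X)$, i.e.\ for a.e.\ $Z$ once we know the level set $\{F(t,\cdot)=F(t,X)\}$ is Lebesgue-null — which again follows from $F(t,\cdot)\in W^{2,\infty}$ having nonvanishing gradient at $X$, hence near $X$, combined with the coarea formula away from $X$ or a direct argument that a $W^{2,\infty}$ function's level set through a regular point is null (it is locally a graph). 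Then dominated convergence (dominating function $|Z|^{-d-s}\1_{|Z|>\rho}\in L^1$) gives convergence of the truncated integrals, not merely limsup, which is even stronger than needed.

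The main obstacle I expect is the liminf/limsup asymmetry hidden in the pointwise convergence of the indicators: if $F(t,X+Z)=F(t,X)$ for a positive-measure set of $Z$, the indicators need not converge and one only gets $\limsup_n \1^\pm_{\{F_n\geq F_n(X)\}}(X+Z)\leq \1^\pm_{\{F\geq F(X)\}}(X+Z)$ on that bad set — but this one-sided bound is exactly what upper semicontinuity requires, so applying the reverse Fatou lemma (justified by the integrable domination $|Z|^{-d-s}\1_{|Z|>\rho}$) to $-\1^\pm$ closes the gap. So the real content is: (a) the uniform curvature estimate from $W^{2,\infty}$ bounds that lets us truncate, and (b) checking that the one-sided pointwise inequality for the indicators holds for \emph{every} $Z\neq 0$, which is immediate since $F_n(t,X+Z)\to F(t,X+Z)$ and $F_n(t,X)\to F(t,X)$ imply that $F(t,X+Z)>F(t,X)$ forces $F_n(t,X+Z)>F_n(t,X)$ for large $n$. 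Finally, the ``in particular'' statement follows by taking $F_n \equiv F$ but evaluating at $(t_n,X_n)\to(t,X)$, using continuity of $F$ in time with values in $W^{2,\infty}$ to get the same uniform bounds and pointwise convergence, so the identical argument applies.
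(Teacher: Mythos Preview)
Your proposal is correct and follows essentially the same architecture as the paper: truncate near the singularity using the uniform $C^{1,1}$ bound on the superlevel-set boundary, then handle the far field via the one-sided inclusion $\{F_n(t,\cdot)\geq F_n(t,X)\}\subseteq\{F(t,\cdot)\geq F(t,X)-\eta\}$ for large $n$. The only cosmetic difference is that where you invoke reverse Fatou on the pointwise inequality $\limsup_n \1^\pm_{\{F_n\geq F_n(X)\}}\leq \1^\pm_{\{F\geq F(X)\}}$, the paper instead introduces an explicit $\eta$-relaxation and uses the $L^1_{loc}$ continuity of $\eta\mapsto \1^\pm_{\{F\geq F(X)-\eta\}}$ to run an $\epsilon/5$ argument on an annulus $r_0<|Z|<R_0$; both routes encode the same upper-semicontinuity of closed superlevel-set indicators.
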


\begin{proof}
Fix some point $(t, X)\in (0,T)\times \R^d$ and let $\epsilon>0$.  As $(F_n)_{n=1}^\infty$ is uniformly $C^{1,1}$ in space for times $t'\in (\frac{t}{2}, \frac{t+T}{2})$, we have that 
\begin{equation}
\bigg| s(1-s)\int\limits_{|Z|<r} \frac{\1^\pm_{\{F_n(t,\cdot)\geq F_n(t,X)\}}(X+Z)}{|Z|^{d+s}}dZ\bigg| \lesssim r.
\end{equation}
uniformly in $n$.  Taking $r_0\lesssim \epsilon$, we thus have that 
\begin{equation}\label{e:smallr0bound}
\bigg|s(1-s)\int\limits_{|Z|<r_0} \frac{\1^\pm_{\{F_n(t,\cdot)\geq F_n(t,X)\}}(X+Z)}{|Z|^{d+s}}dZ\bigg| \leq \frac{\epsilon}{5}, 
\end{equation}
for all $n$.  

Now take $R_0>0$ large enough so that
\begin{equation}\label{e:largeR0bound}
\bigg|s(1-s)\int\limits_{\R^d\setminus B_{R_0}^d} \frac{\1^\pm_{\{F_n(t,\cdot)\geq F_n(t,X)\}}(X+Z)}{|Z|^{d+s}}dZ\bigg| \leq s(1-s)\int\limits_{\R^d\setminus B_{R_0}^d} \frac{1}{|Z|^{d+s}}dZ \leq \frac{\epsilon}{5}
\end{equation}
Combining \eqref{e:smallr0bound} and \eqref{e:largeR0bound} gives us that 
\begin{equation}\label{e:annulusbound}
\begin{split}
-H_s &(X,  \{F_n(t,\cdot\geq F_n(t,X)\}) + H_s(X, \{F(t,\cdot)\geq F(t, X)\}) 
\\&\leq \frac{4}{5}\epsilon + s(1-s)\int\limits_{B_{R_0}^d\setminus B_{r_0}^d} \frac{\1^\pm_{\{F_n(t,\cdot)\geq F_n(t,X)\}}(X+Z)-\1^\pm_{\{F(t,\cdot)\geq F(t,X)\}}(X+Z)}{|Z|^{d+s}}dZ.  
\end{split}
\end{equation}

Note that as $\eta\to 0+$, we have the convergence
\begin{equation}
 \1^\pm_{\{F(t,\cdot)\geq F(t,X)-\eta\}} \xrightarrow[]{L^1_{loc}} \1^\pm_{\{F(t,\cdot)\geq F(t,X)\}}.
\end{equation}
Hence for some $\eta_0>0$ sufficiently small,
\begin{equation}\label{e:eta0small}
s(1-s)\int\limits_{B_{R_0}^d\setminus B_{r_0}^d} \frac{ \1^\pm_{\{F(t,\cdot)\geq F(t,X)-\eta_0\}}-\1^\pm_{\{F(t,\cdot)\geq F(t,X)\}}(X+Z)}{|Z|^{d+s}}dZ < \frac{\epsilon}{5}.
\end{equation}
%As $F\in C_{loc}((0,T)\times \R^d)$, we have that $F$ is uniformly continuous on $[\frac{t_0}{2}, \frac{t_0+T}{2}]\times B_{2R_0}^d(X_0)$.  Thus there is a $\delta>0$ such that for any $(t,X)\in [\frac{t_0}{2}, \frac{t_0+T}{2}]\times B_{2R_0}(X_0)$ and $|s|, |Y|<\delta$,
%\begin{equation}
%|F(t+s, X+Y)-F(t,X)| < \frac{\eta_0}{2}.
%\end{equation}
%Hence if $|t-t_0|, |X-X_0|<\delta$, then 
As $F_n\to F$ in $C_{loc}$, we have that for $n$ sufficiently large that
\begin{equation}
\{Z: F_n(t,X+Z)\geq F_n(t,X)\} \cap B_{R_0}^d \subseteq \{Z: F(t, X+Z)\geq F(t,X)-\eta_0\}\cap B_{R_0}^d.  
\end{equation}
In particular, we then have that 
\begin{equation}\label{e:signedcharacteristicinequality}
\1^\pm_{\{F_n(t,\cdot)\geq F(t,X)\}}(X+Z) \leq \1^\pm_{\{F(t, \cdot)\geq F(t,X)-\eta_0\}}(X_0+Z), \qquad |Z|\leq R_0.
\end{equation}
Plugging in \eqref{e:signedcharacteristicinequality} and \eqref{e:eta0small} into \eqref{e:annulusbound} then gives us that 
\begin{equation}
-H_s(X,  \{F(t,\cdot\geq F(t,X)\}) + H_s(X_0, \{F(t_0,\cdot)\geq F(t_0, X_0)\}) <\epsilon,
\end{equation}
whenever $n$ is sufficiently large.    
\end{proof}

\begin{lemma}\label{l:viscositydoublesmooth}
Let $U_1, U_2\in L^\infty((0,T); W^{1,\infty}( \R^d))\cap L^\infty (\R^d; C^{1/(1+s)}(0,T))$ with $U_1$ a subsolution to \eqref{e:viscositysub} and $U_2$ a viscosity supersoltuion to \eqref{e:viscositysuper}.  Then $U_1(t,X)-U_2(t,Y)$ is a viscosity subsolution to \eqref{e:viscositydouble}.  
\end{lemma}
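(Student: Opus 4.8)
The plan is the classical argument that a difference of a sub- and a super-solution solves the ``doubled'' equation \eqref{e:viscositydouble}, executed for the nonlocal level-set flow and adapted to the fact that $U_1,U_2$ are only $C^{1/(1+s)}$ in time. One cannot simply freeze one spatial slot of a test function for $V(t,X,Y):=U_1(t,X)-U_2(t,Y)$ and feed it into \eqref{e:viscositysub}--\eqref{e:viscositysuper}, since the leftover $U_2(\cdot,Y)$ (resp. $U_1(\cdot,X)$) is not smooth in $t$; the remedy is to double the time variable. \emph{Set-up.} Let $\Psi$ be smooth and cross $V$ from above at $(\bar t,\bar X,\bar Y)$; after subtracting a localized penalty $\theta(|X-\bar X|^4+|Y-\bar Y|^4+(t-\bar t)^2)$ and letting $\theta\downarrow 0$ at the end, assume the contact is strict on a compact neighborhood $Q$, and treat first the nondegenerate case $\nabla_X\Psi,\nabla_Y\Psi\neq 0$ at $(\bar t,\bar X,\bar Y)$. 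For $\eps>0$ let $(t_\eps,X_\eps,s_\eps,Y_\eps)$ maximize over $Q\times[0,T]$
\[
U_1(t,X)-U_2(s,Y)-\Psi(t,X,Y)-\frac{(t-s)^2}{2\eps}.
\]
Comparing the maximum with its value at $(t_\eps,X_\eps,t_\eps,Y_\eps)$ and using $\|U_2\|_{C^{1/(1+s)}_t}<\infty$ gives $(t_\eps-s_\eps)^2/\eps\lesssim|t_\eps-s_\eps|^{1/(1+s)}\to 0$, whence, by strictness of the maximum of $V-\Psi$ on $Q$, $(t_\eps,X_\eps,s_\eps,Y_\eps)\to(\bar t,\bar X,\bar t,\bar Y)$ and $\nabla_X\Psi(t_\eps,X_\eps,Y_\eps),\nabla_Y\Psi(t_\eps,X_\eps,Y_\eps)\neq0$ for $\eps$ small.

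\emph{The two inequalities.} Freezing $(s,Y)=(s_\eps,Y_\eps)$, the smooth $\phi_1(t,X):=\Psi(t,X,Y_\eps)+(t-s_\eps)^2/(2\eps)$ touches $U_1$ from above at $(t_\eps,X_\eps)$, so Definition \ref{d:viscosity}(1) gives, for every $r>0$,
\[
\partial_t\Psi(t_\eps,X_\eps,Y_\eps)+\frac{t_\eps-s_\eps}{\eps}\leq s(1-s)|\nabla_X\Psi|\!\left(\int\limits_{|Z|>r}\!\frac{\1^\pm_{\{U_1(t_\eps,\cdot)\geq U_1(t_\eps,X_\eps)\}}(X_\eps+Z)}{|Z|^{d+s}}dZ+P.V.\!\!\int\limits_{|Z|<r}\!\frac{\1^\pm_{\{\Psi(t_\eps,\cdot,Y_\eps)\geq\Psi(t_\eps,X_\eps,Y_\eps)\}}(X_\eps+Z)}{|Z|^{d+s}}dZ\right).
\]
Symmetrically, freezing $(t,X)=(t_\eps,X_\eps)$, the smooth $\phi_2(s,Y):=-\Psi(t_\eps,X_\eps,Y)-(t_\eps-s)^2/(2\eps)$ touches $U_2$ from below at $(s_\eps,Y_\eps)$, and Definition \ref{d:viscosity}(2) produces a matching lower bound whose left side is $(t_\eps-s_\eps)/\eps$ and whose right side carries $|\nabla_Y\Psi|$, the far set $\{U_2(s_\eps,\cdot)>U_2(s_\eps,Y_\eps)\}$, and the near set $\{\Psi(t_\eps,X_\eps,\cdot)<\Psi(t_\eps,X_\eps,Y_\eps)\}$. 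Subtracting the two, the $\eps$-singular terms $(t_\eps-s_\eps)/\eps$ cancel \emph{exactly} --- which is the whole point of the construction, so no bound on $(t_\eps-s_\eps)/\eps$ is needed --- leaving $\partial_t\Psi(t_\eps,X_\eps,Y_\eps)$ bounded above by a sum of nonlocal terms in $U_1$, $U_2$, and $\Psi$.

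\emph{Limit and identification.} Send $\eps\to0$. The near (principal-value) terms involving $\Psi$ converge to the corresponding integrals at $(\bar t,\bar X,\bar Y)$, because $\Psi$ is smooth with nonvanishing spatial gradients there, so $\{\Psi(\bar t,\cdot,\bar Y)=\Psi(\bar t,\bar X,\bar Y)\}$ and $\{\Psi(\bar t,\bar X,\cdot)=\Psi(\bar t,\bar X,\bar Y)\}$ are smooth hypersurfaces and the signed characteristics converge in $L^1_{loc}$. For the far integrals carrying $U_1,U_2$ one runs the enlarged-level-set / monotonicity argument of Lemma \ref{l:fractmeancurvsemicont}: continuity of $U_1,U_2$, $U_1(t_\eps,X_\eps)\to U_1(\bar t,\bar X)$, $U_2(s_\eps,Y_\eps)\to U_2(\bar t,\bar Y)$, and monotonicity of $H_s$ (Proposition \ref{p:fracmeancurvbasics}(4)) yield the one-sided convergence of the fractional curvatures. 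Letting then $r\to0$ and $\theta\to0$ gives the inequality at $(\bar t,\bar X,\bar Y)$; one recasts it as \eqref{e:viscositydouble} using the trivial identities $\{U_1(\bar t,\cdot)\geq U_1(\bar t,\bar X)\}=\{V(\bar t,\cdot,\bar Y)\geq V(\bar t,\bar X,\bar Y)\}$ and $\{U_2(\bar t,\cdot)<U_2(\bar t,\bar Y)\}=\{V(\bar t,\bar X,\cdot)>V(\bar t,\bar X,\bar Y)\}$, together with $H_s(Y,\mathcal{C}E)=-H_s(Y,E)$ (immediate from $\1^\pm_{\mathcal{C}E}=-\1^\pm_{E}$) to pass from the supersolution's set $\{U_2>U_2(\bar Y)\}$ to the one appearing in \eqref{e:viscositydouble}; the $>$-versus-$\geq$ discrepancies are harmless because the relevant level sets have measure zero near $\bar X,\bar Y$ (a test function with nonvanishing gradient touches there) and $H_s$ ignores null modifications. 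The degenerate cases $\nabla_X\Psi(\bar t,\bar X,\bar Y)=0$ or $\nabla_Y\Psi(\bar t,\bar X,\bar Y)=0$ are treated identically, invoking the degenerate clauses of Definition \ref{d:viscosity} and the elementary bound $Cr^{1-s}\|D^2\Psi\|_{\infty}$ on the near principal-value integral to kill the vanishing-gradient contribution.

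\emph{Main obstacle.} The genuine difficulty is precisely the low time regularity: since $U_1,U_2$ are only $1/(1+s)$-H\"older in $t$, freezing a variable produces an inadmissible, non-smooth-in-time test function, which is what forces the time-doubling penalty $(t-s)^2/(2\eps)$; the structural point that makes the scheme work --- and removes any need for a quantitative estimate on $(t_\eps-s_\eps)/\eps$ --- is the exact cancellation of the singular time derivatives upon subtracting the two viscosity inequalities. The only other nontrivial step is passing to the limit in the far nonlocal integrals, where merely the upper semicontinuity of $X\mapsto -H_s(X,\{U\geq U(X)\})$ is available, and this is exactly the situation already handled by Lemma \ref{l:fractmeancurvsemicont}.
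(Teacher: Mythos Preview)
Your argument is correct and follows essentially the same route as the paper's own proof: doubling the time variable via a penalty $(t-s)^2/(2\eps)$, applying the sub- and supersolution inequalities separately, cancelling the singular time derivatives, and passing to the limit in the nonlocal curvature terms via the upper semicontinuity of Lemma~\ref{l:fractmeancurvsemicont}. The only cosmetic difference is that the paper writes the penalized test function as $\Psi\bigl(\tfrac{t+s}{2},X,Y\bigr)+\tfrac{(t-s)^2}{2\eps}$ rather than your $\Psi(t,X,Y)+\tfrac{(t-s)^2}{2\eps}$, which is an inconsequential variant of the same device.
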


\begin{proof}
Our goal is show that $V(t,X,Y) = U_1(t,X)-U_2(t,Y)$ is a subsolution of \eqref{e:viscositydouble}.  

Suppose $\Psi: (0, T)\times \R^d\times \R^d\to \R$ satisfies 
\begin{equation}
\left\{\begin{array}{ll}
\Psi(t_0,X_0,Y_0) = V(t_0,X_0,Y_0), & \\  \Psi(t,X,Y) > V(t,X,Y), &  (t,X,Y)\not = (t_0, X_0, Y_0) \end{array}\right. .
\end{equation}

In fact, without loss of generality we may assume that 
\begin{equation}
\Psi(t,X,Y) - (U_1(t,X)-U_2(t,Y)) \geq \alpha \min\{|t-t_0|^2+|X-X_0|^2+|Y-Y_0|^2, 1\},
\end{equation}
for some $\alpha>0$ arbitrarily small.

Consider $\Psi_\epsilon(t,s, X, Y) = \displaystyle\Psi(\frac{t+s}{2}, X,Y) + \frac{|t-s|^2}{2\epsilon}$.  Then 
\begin{equation}\label{e:viscositylemmalowerbound}
\begin{split}
\Psi_\epsilon&(t,s,X,Y) - (U_1(t,X)-U_2(s, Y))
\\&\geq  \displaystyle\Psi(\frac{t+s}{2}, X,Y) + \frac{|t-s|^2}{2\epsilon} - (U_1(\frac{t+s}{2}, X) - U_2((\frac{t+s}{2}, Y)) - \frac{|t-s|^{\frac{1}{1+s}}}{2}(|| U_1||_{L^\infty_XC^{\frac{1}{1+s}}_t} + ||U_2||_{L^\infty_YC^{\frac{1}{1+s}}_t} ) 
\\& \geq \alpha \min\{|\frac{t+s}{2}-t_0|^2+|X-X_0|^2+|Y-Y_0|^2, 1\}  + \frac{|t-s|^2}{2\epsilon} - C|t-s|^\frac{1}{1+s} 
\end{split}
\end{equation}
Taking $\epsilon<<\alpha \min\{\sqrt{t_0}, \sqrt{T-t_0}\}$,  we can guarantee then that 
\begin{equation}
\Psi_\epsilon(t,s,X,Y) - (U_1(t,X)-U_2(s, Y)) > 0, \qquad t=0,T \text{ or } s=0,T.
\end{equation}
As the RHS of \eqref{e:viscositylemmalowerbound} $\to \infty $ as $ |(X,Y)|\to \infty$, we thus have that a global minimum $(t_\epsilon, s_\epsilon, X_\epsilon, Y_\epsilon)$ exists.  Since
\begin{equation}
\begin{split}
\Psi_\epsilon(t,s,X,Y) - (U_1(t,X)-U_2(s, Y))\geq\frac{|t-s|^2}{2\epsilon} - C|t-s| \geq  -\tilde{C}\epsilon^{\frac{1+s}{1+2s}}
\\ \Psi_\epsilon(t_0, t_0, X_0, Y_0)- (U_1(t_0,X_0)-U_2(t_0, Y_0)) = 0,
\end{split}
\end{equation} 
it then follows that at the minimum 
\begin{equation}
|X_\epsilon-X_0|^2, |Y_\epsilon-Y_0|^2 \leq \frac{\tilde{C}\epsilon^{\frac{1+s}{1+2s}}}{2\alpha}.
\end{equation}
Thus the sequence $(t_\epsilon, s_\epsilon, X_\epsilon, Y_\epsilon)$ is bounded.  Passing to a convergent subsequence, the uniqueness of the minimum for $\Psi(t,X,Y)-(U_1(t,X)-U_2(t,Y))$ then implies that 
\begin{equation}
(t_\epsilon, s_\epsilon, X_\epsilon, Y_\epsilon)\to (t_0, t_0, X_0, Y_0).
\end{equation}

Now, assume that $\nabla_X \Psi(t_0,X_0,Y_0),\nabla_Y \Psi(t_0. X_0, Y_0)\not = 0$.  Then by continuity, for $\epsilon$ sufficiently small 
\begin{equation}
\nabla_X \Psi_\epsilon(t_\epsilon, s_\epsilon, X_\epsilon, Y_\epsilon), \nabla_Y \Psi_\epsilon(t_\epsilon, s_\epsilon, X_\epsilon, Y_\epsilon) 
\not = 0.
\end{equation}

As $U_1$ is a subsolution to \eqref{e:viscositysub} and $U_2$ a viscosity supersoltuion to \eqref{e:viscositysuper}, it follows that 
\begin{equation}
\begin{split}
\partial_t \Psi_\epsilon(t_\epsilon, s_\epsilon, X_\epsilon, Y_\epsilon) + \partial_s \Psi_\epsilon(t_\epsilon, s_\epsilon,& X_\epsilon, Y_\epsilon)\leq 
\\ &-H_s(X_\epsilon, \{\Psi_\epsilon(t_\epsilon, s_\epsilon, \cdot, Y_\epsilon)\geq \Psi_\epsilon(t_\epsilon, s_\epsilon, X_\epsilon, Y_\epsilon)\})|\nabla_X\Psi_\epsilon(t_\epsilon, s_\epsilon, X_\epsilon, Y_\epsilon)|
\\&+H_s(Y_\epsilon, \{\Psi_\epsilon(t_\epsilon, s_\epsilon, X_\epsilon, \cdot)> \Psi_\epsilon(t_\epsilon, s_\epsilon, X_\epsilon, Y_\epsilon)\})|\nabla_Y\Psi_\epsilon(t_\epsilon, s_\epsilon, X_\epsilon, Y_\epsilon)|
\end{split}
\end{equation}
We have by continuity that 
\begin{equation}
\begin{split}
\partial_t \Psi_\epsilon(t_\epsilon, s_\epsilon, X_\epsilon, Y_\epsilon) + \partial_s \Psi_\epsilon(t_\epsilon, s_\epsilon, X_\epsilon, Y_\epsilon) = \frac{\partial_t \Psi(t_\epsilon, X_\epsilon, Y_\epsilon) + \partial_t\Psi( s_\epsilon,X_\epsilon, Y_\epsilon) }{2} \to \partial_t \Psi(t_0, X_0, Y_0),
\\ |\nabla_X\Psi_\epsilon(t_\epsilon, s_\epsilon, X_\epsilon, Y_\epsilon)|\to |\nabla_X\Psi(t_0, X_0, Y_0)|, 
\\ |\nabla_Y\Psi_\epsilon(t_\epsilon, s_\epsilon, X_\epsilon, Y_\epsilon)|\to |\nabla_Y\Psi(t_0, X_0, Y_0)|,  
\end{split}
\end{equation}
As the $s$-mean curvature of closed superlevel sets $H_s(X, \{F(\cdot)\geq F(X)\})$ is upper semicontinuous by Lemma \ref{l:fractmeancurvsemicont}, it then follows that 
\begin{equation}
\begin{split}
\partial_t\Psi(t_0, X_0, Y_0) = &\lim\limits_{\epsilon\to 0} \partial_t \Psi_\epsilon(t_\epsilon, s_\epsilon, X_\epsilon, Y_\epsilon) + \partial_s \Psi_\epsilon(t_\epsilon, s_\epsilon, X_\epsilon, Y_\epsilon) 
\\ \leq &\limsup\limits_{\epsilon\to 0}  -H_s(X_\epsilon, \{\Psi_\epsilon(t_\epsilon, s_\epsilon, \cdot, Y_\epsilon)\geq \Psi_\epsilon(t_\epsilon, s_\epsilon, X_\epsilon, Y_\epsilon)\})|\nabla_X\Psi_\epsilon(t_\epsilon, s_\epsilon, X_\epsilon, Y_\epsilon)|
\\&\qquad+H_s(Y_\epsilon, \{\Psi_\epsilon(t_\epsilon, s_\epsilon, X_\epsilon, \cdot)> \Psi_\epsilon(t_\epsilon, s_\epsilon, X_\epsilon, Y_\epsilon)\})|\nabla_Y\Psi_\epsilon(t_\epsilon, s_\epsilon, X_\epsilon, Y_\epsilon)|
\\ &\leq -H_s(X_0, \{\Psi(t_0 \cdot, Y_0)\geq \Psi(t, X_0, Y_0)\})|\nabla_X\Psi(t_0, X_0, Y_0)|
\\&\qquad+H_s(Y_0, \{\Psi(t_0, X_0, \cdot)> \Psi(t_0,  X_0, Y_0)\})|\nabla_Y\Psi(t_0,  X_0, Y_0)|.  
\end{split}
\end{equation}

The case that $\nabla_X \Psi(t_0,X_0,Y_0),\nabla_Y \Psi(t_0. X_0, Y_0)\not = 0$ is sufficient for our use in Lemma \ref{l:assumptionscase}.  Else, it can be proved similarly to the proof of Lemma 3 in \cite{CyrilFract}.  

\end{proof}

{\bf Proof of Lemma \ref{l:viscositydouble} }

\begin{proof}
Without loss of generality, we assume that $\gamma = 0$ and  $\mathcal{L}^d(\Gamma_t ) = 0$ for almost every $t\in \R$.  Hence, $\Gamma_t$ cannot fatten and has empty interior for all times $t$.  

Let $(E_t^-, \Gamma_t, E_t^+)$ be our viscosity solution.  For $n\in\N$, let $U^{(n\pm)}(t,X)$ be the viscosity solution of \eqref{e:viscosity} for the initial data 
\begin{equation}
\begin{split}
U_0^{(n+)}(X) &= \left\{\begin{array}{ll} 0, & X\in \overline{E_0^+}, \\ \min\{1, n d(X, \Gamma_0)\} , & X\in E_0^-\cup \Gamma_0, \end{array} \right. ,
\\ U_0^{(n-)}(Y) &= - U_0^{(n+)}(Y)
\end{split}
\end{equation}

By Lemmas \ref{l:viscositydoublesmooth} and \ref{l:compactness}, we have that $U^{(n+)}(t,X)-U^{(n-)}(t,Y)$ is a viscosity subsolution of the doubled equation \eqref{e:viscositydouble}.  We claim that
\begin{equation}\label{e:semicontenv}
\1_{E_t^-\cup\Gamma_t}(X) - \1_{E_t^{-}}(Y) = \limsup\limits_{(t_n, X_n, Y_n)\to (t,X, Y)} U^{(n+)}(t_n,X_n)-U^{(n-)}(t_n,Y_n),
\end{equation}
and hence that $\1_{E_t^-\cup\Gamma_t}(X) - \1_{E_t^{-}}(Y) $ is the upper semicontinuous envelope of subsolutions.  Thus is a subsolution itself by standard viscosity solution theory.  

As $U_0^{(n+)}(X) \leq \1_{E_0^-\cup\Gamma_0}(X)$ and $U_0^{(n-)}(Y))\geq \1_{E_0^-}(Y)$, we have immediately by Theorem \ref{t:subsuper} that 
\begin{equation}
0\leq \limsup\limits_{(t_n,X_n)\to (t,X)} U^{(n+)}(t_n,X_n) \leq \1_{E_t^-\cup\Gamma_t}(X), \qquad  \liminf\limits_{(t_n,Y_n)\to (t,Y)} U^{(n-)}(t_n,Y_n) \leq \1_{E_t^-}(Y)\leq 1,
\end{equation}

We claim that in fact
\begin{equation}\label{e:enough}
 \lim\limits_{t_n\to t}\limsup\limits_{X_k\to X} U^{(n+)}(t_n,X_k) = \1_{E_t^-\cup\Gamma_t}(X), 
\end{equation}
which implies \eqref{e:semicontenv}.  To prove this, it suffices to consider the case that $(t,X)$ is in the interior of $E_t^-\cup \Gamma_t$.  As $\Gamma_t$ does not fatten by assumption, that means that $(t,X)\in E_t^-$.  As $E_t^{-} = \displaystyle\bigcup\limits_{\gamma<0} E_t^{\gamma -}$, we thus have that $X\in E_t^{\gamma-}$ for some $\gamma<0$.  As $U^{n+}_0(Z)\geq \1_{E_0^{\gamma}\cup\Gamma_0^\gamma}(Z)$ for $Z$ sufficiently large, we then have by the comparison principle that 
\begin{equation}
1=\1_{E_t^{\gamma-}\cup\Gamma_t^\gamma}(X) \leq U^{(n+)}(t,X),
\end{equation}
for all $n$ sufficiently large.  Since $U^{(n^+)}$ is continuous, that then implies \eqref{e:enough}.

\end{proof}

\end{appendix}

\bibliography{Fractional-Mean-Curvature-Flow-References}{}

\begin{thebibliography}{DdPW18}

\bibitem[ACP19]{CrystallineChambolle}
M.~Novaga A.~Chambolle, M.~Morini and M.~Ponsiglione.
\newblock Existence and uniqueness for anisotropic and crystalline mean
  curvature flows.
\newblock {\em J. Amer. Math. Soc.}, 2019.
\newblock accepted for publication, arXiv:1702.03094.

\bibitem[AKS08]{KiselevBurgers}
F.~Nazarov A.~Kiselev and R.~Shterenberg.
\newblock Blow up and regularity for fractal burgers equation.
\newblock {\em Dynamics of PDE}, 5(3):211--240, 2008.

\bibitem[CCV11]{NonlocalHolderRegularity}
L.A. Caffarelli, Ch.-H. Chan, and A.~Vasseur.
\newblock Regularity theory for parabolic nonlinear integral operators.
\newblock {\em J. Amer. Math. Soc.}, 24(3):849--869, 2011.

\bibitem[CDNV18]{Fattening}
A.~Cesaroni, S.~Dipierro, M.~Novaga, and E.~Valdinoci.
\newblock Fattening and nonfattening phenomena for planar nonlocal curvature
  flows.
\newblock preprint, arXiv:1806.04778, 2018.

\bibitem[CMP15]{Chambolle}
A.~Chambolle, M.~Morini, and M.~Ponsiglione.
\newblock Nonlocal curvature flows.
\newblock {\em Arch. Ration. Mech. Anal.}, 218(3):1263--1329, 2015.

\bibitem[CN18]{SelfShrinkers}
A.~Cesaroni and M.~Novaga.
\newblock Symmetric self-shrinkers for the fractional mean curvature flow.
\newblock preprint, arXiv:1812.01847, 2018.

\bibitem[CRS10]{NonlocalMinimal}
L.A. Caffarelli, J.M. Roquejoffre, and O.~Savin.
\newblock Nonlocal minimal surfaces.
\newblock {\em Communications of Pure and Applied Math}, 63(9):1111--1144,
  2010.

\bibitem[CS10]{Threshold}
L.~Caffarelli and P.~Souganidis.
\newblock Convergence of nonlocal threshold dynamics approximations to front
  propagation.
\newblock {\em Arch. Rational Mech. Anal.}, 195(1):1--23, 2010.

\bibitem[CSV18]{Neckpinch}
E.~Cinti, C.~Sinestrari, and E.~Valdinoci.
\newblock Neckpinch singularities in fractional mean curvature flows.
\newblock {\em Proceedings of the American Mathematical Society},
  146(6):2637--2646, 2018.

\bibitem[CV11]{LimitS1}
L.~Caffarelli and E.~Valdinoci.
\newblock Uniform estimates and limiting arguments for nonlocal minimal
  surfaces.
\newblock {\em Calc. Var. Partial Differential Equations}, 41(1-2):203--240,
  2011.

\bibitem[CV13]{NonlocalLimit}
L.~Caffarelli and E.~Valdinoci.
\newblock Regularity properties of nonlocal minimal surfaces via limiting
  arguments.
\newblock {\em Advances in Mathematics}, 248:843 -- 871, 2013.

\bibitem[DdPW18]{NonlocalCone}
J.~Davila, M.~del Pino, and J.~Wei.
\newblock Nonlocal s-minimal surfaces and lawson cones.
\newblock {\em J. of Diff. Geom.}, 2018.
\newblock accepted for publication, arXiv:1402.4173.

\bibitem[ECV19]{QuantitativeFlatness}
J.~Serra E.~Cinti and E.~Valdinoci.
\newblock Quantitative flatness results and bv-estimates for stable nonlocal
  minimal surfaces.
\newblock {\em J. of Diff. Geom.}, 2019.
\newblock accepted for publication, arXiv:1602.00540.

\bibitem[ES91]{EvansMCF1}
L.~C. Evans and J.~Spruck.
\newblock Motion of level sets by mean curvature i.
\newblock {\em J. of Differ. Geom.}, 33(3):635--681, 1991.

\bibitem[FV17]{NonlocalLip}
A.~Figalli and E.~Valdinoci.
\newblock Regularity and bernstein-type results for nonlocal minimal surfaces.
\newblock {\em J. reine angew. Math.}, 729:263 -- 273, 2017.

\bibitem[Imb09]{CyrilFract}
C.~Imbert.
\newblock Level set approach for fractional mean curvature flows.
\newblock {\em Interfaces Free Bound}, 11(1):153--176, 2009.

\bibitem[JX14]{NonlocalSchauder}
Tianling Jin and Jingang Xiong.
\newblock Schauder estimates for solutions of linear parabolic
  integro-differential equations.
\newblock {\em Discrete Cont. Dyn. Syst.}, 35(12), 2014.

\bibitem[Kis11]{KiselevSurvey}
A.~Kiselev.
\newblock Nonlocal maximum principles for active scalars.
\newblock {\em Adv. Math.}, 227(5):1806--1826, 2011.

\bibitem[MDV14]{KiselevIntRearrange}
L.~Silvestre M.~Dabkowski, A.~Kiselev and V.~Vicol.
\newblock Global well-posedness of slightly supercritical active scalar
  equations.
\newblock {\em Anal. \& PDE}, 7(1):43--72, 2014.

\bibitem[OS88]{LevelSetMethod}
S.~Osher and J.~Sethian.
\newblock Fronts propagating with curvature-dependent speed: algorithms based
  on hamilton-jacobi formulations.
\newblock {\em J. of Comput. Phys.}, 79(1):12--49, 1988.

\bibitem[SDV13]{LimitS0}
G.~Palatucci S.~Dipierro, A.~Figalli and E.~Valdinoci.
\newblock Asymptotics of the $s$-perimeter as $s \searrow 0$.
\newblock {\em Discrete Cont. Dyn. Syst.}, 33(7):2777--2790, 2013.

\bibitem[SV12]{GammaConvergence}
O.~Savin and E.~Valdinoci.
\newblock $\gamma$-convergence for nonlocal phase transitions.
\newblock {\em Ann. Inst. H. Poincare Anal. Non Lineaire}, 29(4):479--500,
  2012.

\bibitem[SV15]{SmoothCurvatureEquations}
M.~S\'aez and E.~Valdinoci.
\newblock On the evolution by fractional mean curvature.
\newblock preprint, arXiv:1511.06944, 2015.

\end{thebibliography}
\bibliographystyle{alpha}

\end{document}